\makeatletter \@addtoreset{equation}{section} \makeatother
\renewcommand\thetable{\thesection.\@arabic\c@table}
\theoremstyle{plain}
\newtheorem{maintheorem}{Theorem}
\newtheorem{maincorollary}{Corollary}
\newtheorem{theorem}{Theorem}[section]
\newtheorem{lemma}{Lemma}[section]
\newtheorem{definition}{Definition}[section]
\newtheorem{Thm}{Theorem}[section]
\newtheorem{Lem}[Thm]{Lemma}
\newtheorem{Prop}[Thm]{Proposition}
\theoremstyle{remark}
\newtheorem{Def}[Thm] {Definition}
\newtheorem{Rem}[Thm] {Remark}
\long\def\begcom#1\endcom{}
\newcommand{\length}{\operatorname{\length}}
\newcommand{\Int}{\operatorname{int}}
\def\length{\operatorname{length}}
\newcommand{\bl} {\begin{lemma}}
\newcommand{\el} {\end{lemma}}
\newcommand{\bt} {\begin{theorem}}
\newcommand{\et} {\end{theorem}}
\newcommand{\bp}{\begin{proof}}
\newcommand{\ep}{\end{proof}}
\newcommand  {\ee} {\end{equation}}
\newcommand  {\beq} {\begin{eqnarray*}}
\newcommand  {\eeq} {\end{eqnarray*}}
\newcommand  {\bd} {\begin{definition}}
\newcommand  {\ed} {\end{definition}}
\def\ep{\noindent{\hfill $\Box$}}
\begin{document}

%\title{Saturated sets of dynamical systems with non-uniform specification}
%\title{Different levels of recurrent scrambled set \\ for dynamic system with specification }
\title{Distributional chaos in multifractal analysis, recurrence and transitivity} % for dynamic systems with specification }

\author{An Chen and Xueting Tian}

\address{Xueting Tian, School of Mathematical Sciences,  Fudan University\\Shanghai 200433, People's Republic of China}
\email{xuetingtian@fudan.edu.cn}
\urladdr{http://homepage.fudan.edu.cn/xuetingtian}

\address{An Chen, School of Mathematical Sciences,  Fudan University\\Shanghai 200433, People's Republic of China}
\email{15210180001@fudan.edu.cn}
%\renewcommand{\baselinestretch}{1.2}
%\large\normalsize
%\date{\today}

\begin{abstract}

%In this paper, we discuss some levels of recurrence and  uncountable DC1-scrambled set.
There are lots of results to study dynamical complexity on irregular sets and level sets of ergodic average from the perspective of density in base space,  Hausdorff dimension, Lebesgue positive measure, positive or full topological entropy (and topological pressure) etc.. However, it is unknown from the viewpoint of chaos. There are lots of results on the relationship of positive topological entropy and various chaos but it is known that positive topological entropy does not imply a strong version of chaos called DC1 so that it is non-trivial to study DC1 on irregular sets and level sets.  In this paper we will show that 
for dynamical system with specification property,  there exist  uncountable DC1-scrambled subsets in irregular sets and level sets. On the other hand, we  also prove that several recurrent levels of points with different recurrent frequency   all have    uncountable DC1-scrambled subsets.   The main technique established to prove above  results is that there exists  uncountable DC1-scrambled subset in saturated sets. 

%different levels of recurrent frequency points all have  DC1 chaotic behavior. Further, we show that these phenomena still happen when restricted on multifractal analysis such as the irregular set or level set.
\end{abstract}

%which is the subset of recurrent points set but disjoint with the set of quasi-weakly almost periodic points. Furthermore, $f$ also possesses an   uncountable DC1-scrambled set which is the subset of the quasi-weakly almost periodic points but disjoint with the set of weakly almost periodic points.
%\footnote {%$^{*}$ 

%%\subjclass[2010]{37D35, 37D20, 37D25, 37C50}
\keywords{Irregular set and level set, Recurrence and Transitivity, Specification, Distributional chaos, Scrambled set}
%\tableofcontents
\subjclass[2010] {  37C50;  37B20;  37B05; 37D45; 37C45.   }
\maketitle
%\tableofcontents
%%%%%%%%%%%%%%%%%%%%%%%%%%%%%%%%
\section{Introduction}

Throughout this paper, let $(X,d)$ be a nondegenerate$($i.e, 
with at least two points$)$ compact metric space, and $f:X \rightarrow X$ is a continuous map. $(X,f)$ is called a dynamical system.

\subsection{Multifractal Analysis}
The theory of multifractal analysis is a subfield of the dimension theory of dynamical systems. Briefly, multifractal analysis studies the dynamical complexity of
the level sets of the invariant local quantities obtained from a dynamical system.  
There are lots of results to study dynamical complexity on irregular sets and level sets of ergodic average from the perspective of density in base space,  Hausdorff dimension, Lebesgue positive measure, positive or full topological entropy (and topological pressure) etc., for example, see \cite{Pesin-Pitskel1984,Barreira-Schmeling2000,  Pesin1997,   CKS,TDbeta,DOT,Bar2011,TV vp,FFW} (for topological entropy or Hausdorff dimension), \cite{Thompson2009,Thompson2008} (for topological pressure), \cite{Takens,KS} (for Lebesgue positive measure) and references therein.  However, it is unknown from the viewpoint of chaos. From chaos theory, we know that Li-Yorke chaotic and distributional chaotic are also good ways to describe the dynamical complexity. 
%For example, one can consider Birkhoff averages, Lyapunov exponents, pointwise dimensions or Bowen entropies\cite{BG,Ba-Sau-TAMS,Clim,Pesin1997}. The following definitions derive from the consideration of Birkhoff averages.
  In this paper, we firstly study  dynamical complexity of  irregular set and level sets  in the viewpoint of a strong chaotic property called DC1. Pikula showed in  \cite{Pik} that positive topological entropy does not imply DC1 so that it is not expected to show DC1 of irregular sets and level sets by using the results in \cite{Pesin-Pitskel1984,Barreira-Schmeling2000,   Ba-Sau-TAMS, TV vp} that irregular set and level sets carry  positive (and full) topological entropy.

The notion of chaos was first introduced in mathematic language by Li and Yorke in \cite{LY} in 1975. For a dynamical system $(X,f)$, they defined that $(X,f)$ is Li-Yorke chaotic if there is an uncountable scrambled set $S\subseteq X$, where $S$ is called a scrambled set if for any pair of distinct two points $x,y$ of $S$, $$\liminf_{n\to +\infty}d(f^nx,f^ny)=0,\ \limsup_{n\to +\infty}d(f^nx,f^ny) 
>0.$$  Since then, several refinements of chaos have been introduced and extensively studied. One of the most important extensions of the concept of chaos in sense of Li and Yorke is distributional chaos as introduced in \cite{SS1994}. The stronger form of chaos has three variants: DC1(distributional chaotic of type 1), DC2 and DC3 (ordered from strongest to weakest).   In this paper, we focus on DC1. Readers can refer to \cite{Dwic,SS,SS2} for the definition of DC2 and DC3 and see \cite{AK,OS,BrH,Dev,BGKM,Kan,Oprocha2009,BHS} and references therein for  related topics on chaos theory if necessary. 
A pair $x,y\in X$ is DC1-scrambled if the following two conditions hold:
$$\forall t>0,\ \limsup_{n\to \infty}\frac{1}{n}|\{i\in [0,n-1]:\ d(f^i(x),f^i(y))<t\}|=1,$$
$$\exists t_0>0,\ \liminf_{n\to \infty}\frac{1}{n}|\{i\in [0,n-1]:\ d(f^i(x),f^i(y))<t_0\}|=0.$$ In other words, the orbits of $x$ and $y$ are arbitrarily close with upper density one, but for some distance, with lower density zero. 

\begin{Def}
 A set $S$ is called a    DC1-scrambled set if   any pair of  distinct points in $S$ is DC1-scrambled.
\end{Def}

\subsubsection{DC1 in Irregular set }

For a continuous function $\varphi$ on $X$, define the \emph{$\varphi-$irregular set} as
\begin{eqnarray*}
% \nonumber to remove numbering (before each equation)
  I_{\varphi}(f) &:=& \left\{x\in X: \lim_{n\to\infty}\frac1n\sum_{i=0}^{n-1}\varphi(f^ix) \,\, \text{ diverges }\right\}.
 % R_{\varphi}(a)^* &=& \{x\in X~|~\lim_{n\to\infty}\sum_{i=0}^{n-1}\varphi(f^ix)=a,  ~x\notin\omega(x)\};\\
 % R_{\varphi}(a)^{\#} &=&  \{x\in X~|~\lim_{n\to\infty}\sum_{i=0}^{n-1}\varphi(f^ix)=a,  ~x\notin\omega(x)~\text{and}~\omega(x)~\text{is minimal}\}.
\end{eqnarray*} 
$\varphi$-regular set and the irregular set, the union of  $I_{\varphi}(f)$ over all continuous functions of $\varphi$ (denoted by $IR(f)$), arise    in the context of multifractal analysis and have been studied a lot, for example,  see \cite{Pesin-Pitskel1984,Barreira-Schmeling2000,  Pesin1997,   CKS,Thompson2008,DOT}.  The irregular points  are also called points with historic behavior, %if   there is a continuous function such that $ \lim_n\frac{1}{n}\sum_{i=0}^{n-1}\varphi(f^{i}(x))$  does not exist,
  see \cite{Ruelle,Takens}. %The set of points with historic behavior is also called  irregular set, denoted by $IR(f),$ 
 From Birkhoff's ergodic theorem, the irregular set is not detectable from the point of view of any invariant measure.
 %From the viewpoint of ergodic theory,   the irregular points are negligible by Birkhoff Ergodic Theorem.  
 However, the irregular set may have strong dynamical complexity in sense of Hausdorff dimension, Lebesgue positive measure,  topological entropy and topological pressure etc..   
  %However,  they describe the points with same asymptotic behavior in the sense of ergodic average divergence.   
   Pesin and Pitskel \cite{Pesin-Pitskel1984} are the first to notice the phenomenon of the irregular set
carrying full topological entropy in the case of the full shift on two symbols.   There are lots of advanced results to show that the irregular points can carry full entropy in symbolic systems, hyperbolic systems, non-uniformly expanding or hyperbolic systems,  and systems with specification-like or shadowing-like properties, for example, see \cite{Barreira-Schmeling2000,   Pesin1997,  CKS,Thompson2008,DOT,LLST,TianVarandas}. For topological pressure case see \cite{Thompson2008} and for Lebesgue positive measure see \cite{Takens,KS}.  Now let us state our first main theorem to study dynamical complexity of irregular set from the perspective of DC1. 
%However, it is an increasingly well-known phenomenon that the irregular set can be large from the point of view of dimension theory\cite{BaL}. Takens and Verbitskiy have obtained multifractal analysis results for the class of maps with specification, using topological entropy as the dimension characteristic\cite{TV}. Later, many authors considered the irregular set and their classifications of maps with specification and proved that they all carry the full entropy, for example, see \cite{Thompson2008,CKS,T16}. 

%A map $f$ is called distributional chaotic of type 1(DC1 chaotic for brevity), if there is an   uncountable DC1-scrambled set $S\subseteq X$.

%Recently Thompson in [12, 13] discuss the topological entropy of irregular set for map with
%(almost) specification and show that the irregular set has full topological entropy
%(or pressure), which is inspired from [9] by Pfister and Sullivan and [11] by Takens
%and Verbitskiy. Our aim of thi

%We will show that DC1-chaotic appear in $I_{\varphi}(f)$.
%  and their intersection with recurrent levels.

\begin{maintheorem}\label{maintheorem-irregular}
Suppose that $(X,f)$ has specification property, $\varphi$ is a continuous function on $X$ and $I_{\varphi}(f)\neq\emptyset$. Then  there is an uncountable DC1-scrambled subset in $I_{\varphi}(f)$.  
\end{maintheorem}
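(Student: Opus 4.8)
The plan is to reduce the statement to a construction inside a suitable saturated set and then to build the scrambled set there by a block-coding argument driven by the specification property. First I would exploit the hypothesis $I_{\varphi}(f)\neq\emptyset$: fix $x_0$ with divergent Birkhoff average and consider the weak-$*$ limit-point set $V(x_0)$ of the empirical measures $\mathcal E_n(x_0)=\frac1n\sum_{i=0}^{n-1}\delta_{f^ix_0}$. This set is a compact connected subset of the $f$-invariant probability measures, and since $\int\varphi\,d\mathcal E_n(x_0)$ diverges the functional $\mu\mapsto\int\varphi\,d\mu$ is non-constant on it; hence there are invariant measures $\mu_1,\mu_2$ with $\int\varphi\,d\mu_1\neq\int\varphi\,d\mu_2$. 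The key observation is that any point $x$ whose empirical measures accumulate at both $\mu_1$ and $\mu_2$ already lies in $I_{\varphi}(f)$, so it suffices to produce an uncountable DC1-scrambled set all of whose points have this property. Moreover the $\mu_i$-generic orbit segments needed below can be taken to be long blocks of the single orbit of $x_0$, so no ergodicity is required.

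Next I would assemble the geometric ingredients from specification. Specification makes $X$ infinite and furnishes two distinct, hence disjoint, periodic orbits $\orb(p),\orb(q)$ (obtained by closing up glued segments), and I set $t_0=\tfrac13\dist(\orb(p),\orb(q))>0$. For error $\eps<t_0$, a point that $\eps$-shadows a long segment of $\orb(p)$ stays in the $\eps$-neighbourhood of $\orb(p)$ throughout that window, and likewise for $q$; consequently, if over a common window one point shadows $\orb(p)$ and another shadows $\orb(q)$, then they are at distance $\geq t_0$ at every instant of that window. This is the device that will drive the far-density up to $1$. I will also use specification in its gluing form: finitely many prescribed orbit segments, separated by gaps of length $N(\eps)$, can be $\eps$-shadowed by a single orbit.

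The construction itself uses a two-type block architecture with rapidly increasing block lengths $L_1\ll L_2\ll\cdots$ (each dwarfing the sum of all earlier lengths and gaps) and errors $\eps_k\downarrow0$. In the \emph{common} stages every point of the family shadows one and the same segment, the segment cycling infinitely often between a $\mu_1$-generic and a $\mu_2$-generic block; in the \emph{differentiating} stages, indexed by $k$, a point labelled by a sequence $s\in\{p,q\}^{\mathbb N}$ shadows a segment of $\orb(s(k))$. I take an uncountable family $\{s_\alpha\}$ of labels that pairwise differ in infinitely many coordinates, and glue the prescriptions by specification into points $x_\alpha$. Then each $x_\alpha$ lies in $I_{\varphi}(f)$, because at the ends of the $\mu_1$- and $\mu_2$-common stages, which dominate their prefixes, $\int\varphi\,d\mathcal E_n(x_\alpha)$ is near the two distinct values $\int\varphi\,d\mu_1$ and $\int\varphi\,d\mu_2$. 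For $\alpha\neq\beta$ and any $t>0$, on every common stage the points are within $2\eps_k<t$ of each other, and since these stages dominate their prefixes the close-density tends to $1$ along their endpoints, giving $\limsup=1$. Finally, because $s_\alpha,s_\beta$ differ infinitely often, infinitely many differentiating stages place one point near $\orb(p)$ and the other near $\orb(q)$; there the distance is $\geq t_0$ throughout a dominating block, so the $t_0$-close-density tends to $0$ along those endpoints, giving $\liminf=0$. Hence every pair is DC1-scrambled; phrased in the paper's language, the $x_\alpha$ populate the saturated set associated with a connected set $K$ of invariant measures containing $\mu_1,\mu_2$ and the two periodic measures.

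I expect the main obstacle to be the simultaneous and uniform realization of the two DC1 requirements over an uncountable family. The subtle point is that the $\liminf=0$ condition demands the close-density actually fall to $0$, not merely below $1$; this forces the far-density to tend to $1$, which cannot be achieved by two merely distinct measures but needs two whose generic orbits are separated with density tending to $1$, i.e. with disjoint supports. Supplying such measures from disjoint periodic orbits while keeping $\mu_1,\mu_2$ active so the points remain $\varphi$-irregular, and calibrating the block lengths and shadowing errors so that every pair inherits both the agreement subsequence (for $\limsup=1$) and a separation subsequence (for $\liminf=0$), is where the care lies; the passage from a single scrambled pair to an uncountable scrambled set is then handled by the family of pairwise-infinitely-differing labels.
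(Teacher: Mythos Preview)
Your overall strategy---gluing via specification a sequence of ``common'' blocks (alternately shadowing $\mu_1$- and $\mu_2$-approximating segments to force $\varphi$-irregularity) and ``differentiating'' blocks (to manufacture the DC1 separation), with rapidly growing block lengths so each stage dominates its prefix---is exactly the architecture the paper uses, and your identification of the delicate point (the $\liminf=0$ condition needs genuinely disjoint supports, not merely distinct measures) is correct. There is, however, one real gap: you claim that specification ``furnishes two distinct, hence disjoint, periodic orbits $\orb(p),\orb(q)$ (obtained by closing up glued segments)''. In the paper's Definition~2.1 this closing-up is precisely condition~(b), which is part of \emph{strong} specification; the specification property assumed in Theorem~A omits (b), and under it the existence of even one periodic point is not guaranteed. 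Since your entire $\liminf=0$ mechanism rests on shadowing these two finite orbits, this step is unjustified as stated.

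The paper avoids periodic orbits altogether. Instead of $(p,q)$ it uses a \emph{distal pair} inside the generic set $G_\mu$ of an ergodic measure with nondegenerate minimal support: such measures are dense in $\mathcal M_f(X)$ under specification (Proposition~4.1), and Lemma~4.2 produces the distal pair. Two such generic points play the role of your $p,q$---when one constructed point shadows the first and another shadows the second over a long block, they stay uniformly apart on a set of in-block density tending to~$1$ (Lemma~3.3), which is exactly what drives $\liminf=0$. The paper then packages the whole construction as a general DC1-in-saturated-sets theorem (Theorem~E), and Theorem~A falls out by choosing an appropriate compact connected $K\subseteq\mathcal M_f(X)$ containing measures with distinct $\varphi$-integrals (via Theorems~D and~4.1). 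Your direct route becomes correct if you replace the two periodic orbits by two disjoint minimal sets (which do exist under specification and are compact, hence at positive distance) or, following the paper, by a distal pair in some $G_\mu$; but the appeal to periodic points does not follow from the hypothesis.
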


% of irregular set.

%So question comes that what kind of dynamical system is Li-Yorke chaotic, or distributional chaotic and how big it is in the sense of measure or topological entropy? In 2002, Blanchard, Glasner, Kolyada and Alejandro proved that positive topological entropy implies Li-Yorke chaos in \cite{BGKM}. Afterwards, Pikula showed in  \cite{Pik} that positive topological entropy does not imply DC1 and Downarowicz proved in \cite{Dwic} that  the  Sm$\acute{\text{i}}$tal conjecture which states that positive topological entropy implies DC2. In 2008, Oprocha and $\breve{\text{S}}$tef$\acute{\text{a}}$nkov$\acute{\text{a}}$ shows in \cite{OS} that the   uncountable DC1-scrambled set is dense in $X$ when $(X,f)$ has a weaker form of specification. 

%In present paper we study the DC1 chaotic in different levels of recurrent frequency points, and we proved that these recurrent points sets of different levels all have the dense   uncountable DC1-scrambled set. 

\subsubsection{DC1 in Level sets}
Level sets is a natural concept to slice points with convergent Birkhoff¡¯s average operated by some continuous function, regarded as the multifractal decomposition \cite{Clim,FH}. Let $\varphi:X\rightarrow \mathbb{R}$  be a continuous function. For any $a\in  L_\varphi,  $
%denote
%$$t_a=\sup_{\mu\in \mathcal M_{f}(X)}\left\{h_\mu:\,  \int\varphi d\mu=a\right\}$$
%and
consider the level set
\begin{eqnarray*}
% \nonumber to remove numbering (before each equation)
  R_{\varphi}(a) &:=& \left\{x\in X: \lim_{n\to\infty}\frac1n\sum_{i=0}^{n-1}\varphi(f^ix)=a\right\}.
 % R_{\varphi}(a)^* &=& \{x\in X~|~\lim_{n\to\infty}\sum_{i=0}^{n-1}\varphi(f^ix)=a,  ~x\notin\omega(x)\};\\
 % R_{\varphi}(a)^{\#} &=&  \{x\in X~|~\lim_{n\to\infty}\sum_{i=0}^{n-1}\varphi(f^ix)=a,  ~x\notin\omega(x)~\text{and}~\omega(x)~\text{is minimal}\}.
\end{eqnarray*}
Denote $R_\varphi=\bigcup_{a\in L_\varphi}R_{\varphi}(a)$, then $R_\varphi$ represents the regular points for $\varphi$. Many authors have considered the entropy of the $R_{\varphi}(a)$. For example, Barreira and Saussol proved in \cite{Ba-Sau-TAMS} that the following properties for a dynamical system $(X,f)$ whose function of metric entropy is upper semi-continuous.  Consider a H$\ddot{\text{o}}$lder continuous function $\phi$ (see~\cite{Bar2011,BD} for almost additive functions with tempered variation) which has a unique equilibrium measure, then for any constant $a\in\Int (L_\phi)$ % where $\Int(L_\phi)$ denotes the interior of $L_\phi$, it satisfies
 \begin{equation}\label{eq-1} h_{top}(R_\varphi(a))=t_a,
 \end{equation}
where $$t_a=\sup_{\mu\in \mathcal M_f(X)}\left\{h_\mu:\,  \int\varphi d\mu=a\right\},$$
$h_{top}(R_\varphi(a))$ denotes the entropy of $R_\varphi(a)$, $h_\mu$ denotes the measure entropy of $\mu$.
For $\phi$ being an arbitrary continuous function (hence there may exist more than one equilibrium measures), \eqref{eq-1} was established by Takens and Verbitski \cite{TV vp} under the assumption that $f$ has the specification property. This result was further generalized by Pfister and Sullivan  \cite{PS2} to dynamical systems with $g$-product property(see \cite{Thompson2009,T16} for more related discussions).  The method used in \cite{BD,Ba-Sau-TAMS} mainly depends on  thermodynamic formalism such as differentiability of pressure function while the method in \cite{TV vp,PS2} is a direct approach by constructing fractal sets. Here we consider the distributional chaotic of $R_\varphi(a)$ and $R_\varphi$.   Let  $\mathcal M(X)$, $\mathcal M_f(X)$, $\mathcal{M}^{e}_{f}(X)$ denote the space of probability measures, $f$-invariant, $f$-ergodic probability measures respectively. For a continuous function $\varphi$ on $X$, denote
$$L_\varphi=\left[\inf_{\mu\in \mathcal M_{f}(X)}\int\varphi d\mu,  \,  \sup_{\mu\in \mathcal M_{f}(X)}\int\varphi d\mu\right]~\textrm{and}~Int(L_\varphi)=\left(\inf_{\mu\in \mathcal M_{f}(X)}\int\varphi d\mu,  \,  \sup_{\mu\in \mathcal M_{f}(X)}\int\varphi d\mu\right).  $$
  %For a continuous function $\varphi$ on $X$,   d
  Note that if $I_{\varphi}(f)\neq\emptyset$,  then  $Int(L_\varphi)\neq \emptyset$. 
The inverse is also true if the system has specification property, see \cite{Thompson2008} (see \cite{TDbeta} for the case of almost specification), and it is easy to check the continuous functions with $Int(L_\varphi)\neq \emptyset$  form an open and dense subset in the space of continuous functions so that so do the functions with $I_{\varphi}(f)\neq\emptyset$ if the system has specification property or almost specification. 
%   and their intersection with recurrent levels.

\begin{maintheorem}\label{maintheorem-levelsets}
Suppose that $(X,f)$ has specification property, $\varphi$ is a continuous function on $X$ and $Int(L_\varphi)\neq\emptyset$. Then for any $a\in Int(L_\varphi)$,   there is an uncountable DC1-scrambled subset in $R_{\varphi}(a)$.
\end{maintheorem}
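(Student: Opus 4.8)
The plan is to deduce Theorem~\ref{maintheorem-levelsets} from the saturated-set result announced in the abstract. For $x\in X$ write $V_f(x)$ for the set of all weak$^*$ limit points of the empirical measures $\frac1n\sum_{i=0}^{n-1}\delta_{f^ix}$, and for a nonempty compact connected set $K\subseteq\mathcal M_f(X)$ let $G_K=\{x\in X:\ V_f(x)=K\}$ be the associated saturated set. The key observation is that membership in the level set is governed entirely by $V_f(x)$: since $\varphi$ is continuous, the functional $\mu\mapsto\int\varphi\,d\mu$ is continuous on $\mathcal M(X)$, and $\frac1n\sum_{i=0}^{n-1}\varphi(f^ix)=\int\varphi\,d\big(\frac1n\sum_{i=0}^{n-1}\delta_{f^ix}\big)$. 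Hence $\lim_{n}\frac1n\sum_{i=0}^{n-1}\varphi(f^ix)=a$ holds if and only if $\int\varphi\,d\mu=a$ for every $\mu\in V_f(x)$; in particular, whenever $K\subseteq S_a:=\{\mu\in\mathcal M_f(X):\ \int\varphi\,d\mu=a\}$ one has the inclusion $G_K\subseteq R_\varphi(a)$.

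First I would produce a target set $K$. Because $a\in Int(L_\varphi)$, there exist invariant measures $\mu_-,\mu_+$ with $\int\varphi\,d\mu_-<a<\int\varphi\,d\mu_+$; as $\mathcal M_f(X)$ is convex and $t\mapsto\int\varphi\,d(t\mu_++(1-t)\mu_-)$ is continuous and passes through $a$, the slice $S_a$ is nonempty. I would then take $K$ to be any nonempty compact connected subset of $S_a$, in the simplest case a single measure $K=\{\mu_a\}$ with $\mu_a\in S_a$, which is trivially compact and connected. By the displayed inclusion, $G_K\subseteq R_\varphi(a)$.

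It then remains to invoke the main technique: under the specification property, the saturated set $G_K$ contains an uncountable DC1-scrambled subset. Applying this to the $K\subseteq S_a$ chosen above yields an uncountable DC1-scrambled set inside $G_K$, hence inside $R_\varphi(a)$, which is exactly the assertion of the theorem.

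The substance of the argument therefore lies entirely in the saturated-set result, and I expect that to be the main obstacle. The difficulty is that DC1 is genuinely stronger than Li--Yorke chaos or positive entropy: for a scrambled pair $(x,y)$ one must simultaneously force the density of close times to have $\limsup=1$ (orbits tracking one another on blocks of density approaching $1$) and the density of $t_0$-close times to have $\liminf=0$ (orbits staying $t_0$-apart on blocks of density approaching $1$). Realizing this alternation while keeping every constructed point inside $G_K$ requires a careful gluing argument: one builds a Cantor family of points by concatenating, through specification, orbit segments approximating the measures of $K$ so as to pin $V_f(\cdot)=K$, while interleaving long \emph{agreement} and \emph{separation} stretches of rapidly growing length so that the two density conditions hold along complementary subsequences of times. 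The role of the hypothesis $a\in Int(L_\varphi)$ is precisely to ensure that $S_a$ is rich enough, rather than reduced to a single rigid measure, to leave room for the separation stretches. Controlling the empirical measures throughout this construction so that their limit set is exactly $K$, uniformly over the uncountable family, is the delicate point.
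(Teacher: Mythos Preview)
Your reduction to saturated sets is the right strategy and matches the paper: one chooses a compact connected $K\subseteq S_a=\{\mu\in\mathcal M_f(X):\int\varphi\,d\mu=a\}$, observes $G_K\subseteq R_\varphi(a)$, and applies the DC1-in-saturated-sets theorem. However, there is a genuine gap in how you invoke that theorem.

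The saturated-set result (Theorem~\ref{maintheorem-DC1inSaturated}) does \emph{not} assert that every $G_K$ contains an uncountable DC1-scrambled set under specification alone. Its hypothesis is that $K$ contains some $\mu=\theta\mu_1+(1-\theta)\mu_2$ such that $G_{\mu_1}$ and $G_{\mu_2}$ each possess a \emph{distal pair}. This is exactly the mechanism producing the separation stretches you allude to: in the construction one interleaves segments shadowing a distal pair $(p,q)$ of generic points, and distality (rather than mere distinctness) is what forces $d(f^ix_\xi,f^ix_\eta)$ to stay bounded away from $0$ along an entire block, yielding the $\liminf=0$ density condition. Your choice of an arbitrary convex combination $\mu_a=t\mu_++(1-t)\mu_-$ gives no control over whether $G_{\mu_\pm}$ have distal pairs; for instance $\mu_-$ could be the Dirac mass at a fixed point.

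The paper closes this gap via Proposition~\ref{distal-dense}: under specification, ergodic measures with nondegenerate \emph{minimal} support are dense in $\mathcal M_f(X)$, and for any such measure $\mu$ the set $G_\mu$ contains a distal pair (Lemma~\ref{generic distal}, using that minimal points have syndetic return times). One then approximates near $\mu_-,\mu_+$ to obtain ergodic $\mu_1,\mu_2$ of this type with $\int\varphi\,d\mu_1<a<\int\varphi\,d\mu_2$, sets $\nu=\theta\mu_1+(1-\theta)\mu_2$ with $\int\varphi\,d\nu=a$, and takes $K=\{\nu\}$. Now Theorem~\ref{maintheorem-DC1inSaturated} applies and gives the DC1-scrambled set inside $G_K\subseteq R_\varphi(a)$. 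So your outline becomes a proof once you replace ``any $\mu_a\in S_a$'' by this specific choice and cite the density/distal-pair proposition.
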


As a corollary, there are uncountable number of disjoint uncountable DC1-scrambled subsets. 

\begin{maincorollary}
 Suppose that $(X,f)$ has specification property but is not uniquely ergodic. Then there exist a collection of subsets of $X$, $\{S_\alpha\}_{\alpha\in (0,1)}$ such that \\
 (1). For any $0<\alpha_1<\alpha_2<1,$ $S_{\alpha_1}\cap S_{\alpha_2}=\emptyset$, and \\
 (2). For any $\alpha\in (0,1)$, $S_\alpha$ is an uncountable DC1-scrambled set.
\end{maincorollary}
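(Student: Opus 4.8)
The plan is to deduce the corollary directly from Theorem~\ref{maintheorem-levelsets}, exploiting the fact that distinct level sets are automatically disjoint. The only real work is an initial reduction: I must manufacture, from the failure of unique ergodicity, a single continuous function whose multifractal decomposition is nontrivial. Since $(X,f)$ is not uniquely ergodic, there exist two distinct invariant measures $\mu_1\neq\mu_2$ in $\mathcal M_f(X)$. Because continuous functions separate Borel probability measures on the compact metric space $X$ (by the Riesz representation theorem, $\mu_1=\mu_2$ as soon as $\int\varphi\,d\mu_1=\int\varphi\,d\mu_2$ for all $\varphi\in C(X)$), I may choose a continuous $\varphi$ with $\int\varphi\,d\mu_1\neq\int\varphi\,d\mu_2$. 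Both values are attained by invariant measures, so $\inf_{\mu}\int\varphi\,d\mu<\sup_{\mu}\int\varphi\,d\mu$; that is, $L_\varphi$ is nondegenerate and $Int(L_\varphi)=(c,d)$ with $c<d$ is a nonempty open interval. Consequently Theorem~\ref{maintheorem-levelsets} applies to every $a\in Int(L_\varphi)$.

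The key structural observation is that the family $\{R_\varphi(a)\}_a$ is pairwise disjoint: a single point $x$ can realize at most one value of the limit $\lim_{n\to\infty}\frac1n\sum_{i=0}^{n-1}\varphi(f^ix)$, so $R_\varphi(a_1)\cap R_\varphi(a_2)=\emptyset$ whenever $a_1\neq a_2$. Thus disjointness of the scrambled sets comes for free once each is placed inside a distinct level set.

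To finish the construction I would fix an order-preserving bijection from $(0,1)$ onto $Int(L_\varphi)$, for instance $\alpha\mapsto a(\alpha)=c+\alpha(d-c)$, and for each $\alpha\in(0,1)$ invoke Theorem~\ref{maintheorem-levelsets} to select an uncountable DC1-scrambled set $S_\alpha\subseteq R_\varphi(a(\alpha))$. Property~(2) is then immediate from the theorem. Property~(1) follows from the previous paragraph: if $0<\alpha_1<\alpha_2<1$ then $a(\alpha_1)\neq a(\alpha_2)$, so $S_{\alpha_1}$ and $S_{\alpha_2}$ sit inside disjoint level sets and are therefore themselves disjoint.

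Given the strength of Theorem~\ref{maintheorem-levelsets}, there is no serious analytic obstacle; the corollary is essentially a packaging of that theorem. The one point that requires genuine care is the reduction step, namely ensuring $Int(L_\varphi)\neq\emptyset$ from the bare hypothesis of non-unique ergodicity, and in particular that the separating function can be taken \emph{continuous} (not merely Borel), so that both $L_\varphi$ and Theorem~\ref{maintheorem-levelsets} are applicable. This is exactly what the separation property of $C(X)$ on a compact metric space provides.
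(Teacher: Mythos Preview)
Your proposal is correct and follows essentially the same route as the paper: use non-unique ergodicity to find a continuous $\varphi$ with $Int(L_\varphi)\neq\emptyset$, apply Theorem~\ref{maintheorem-levelsets} to each point of the interval, and conclude disjointness from the pairwise disjointness of level sets. The only cosmetic difference is that the paper normalizes $\varphi$ affinely so that $Int(L_\varphi)=(0,1)$ exactly, whereas you keep $\varphi$ as found and reparametrize via $\alpha\mapsto c+\alpha(d-c)$; these are equivalent.
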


Let us explain why this result holds. By assumption there are two different invariant measures $\mu,\nu$ so that by weak$^*$ topology there exists a continuous function $\phi$ such that $\int\phi d\mu\neq \int \phi d\nu.$ Thus $Int(L_\phi)\neq\emptyset$. Let $\varphi:= \frac1L(\phi-\inf_{\mu\in \mathcal M_{f}(X)}\int\phi d\mu)$ where $L$ denotes the length of interval $L_\phi$. Then $Int(L_\varphi)=(0,1)$ and Theorem \ref{maintheorem-levelsets} implies this corollary since $R_\varphi(a)\cap R_\varphi(b)=\emptyset$ if $a\neq b.$ %From entropy-dense of \cite{PS} (or see \cite{T16}), $(X,f)$ has specification property

\begin{Thm}\label{Th2018May-D-regularphi}
Suppose that $(X,f)$ has specification property, $\varphi$ is a continuous function on $X$. Then    there is an uncountable DC1-scrambled subset in $R_{\varphi}$.
\end{Thm}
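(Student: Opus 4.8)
The plan is to deduce this statement from our main technical tool---the existence of an uncountable DC1-scrambled subset inside any saturated set---together with Theorem~\ref{maintheorem-levelsets}, by splitting according to whether $Int(L_\varphi)$ is empty. Recall that for $x\in X$ one writes $V_f(x)$ for the set of weak$^*$ limit points of the empirical measures $\frac1n\sum_{i=0}^{n-1}\delta_{f^ix}$, that $V_f(x)$ is always a nonempty compact connected subset of $\mathcal M_f(X)$, and that the saturated set of a nonempty compact connected $K\subseteq\mathcal M_f(X)$ is $G_K=\{x\in X:\,V_f(x)=K\}$, which is nonempty under the specification property.

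First I would treat the generic case $Int(L_\varphi)\neq\emptyset$. Here one may simply fix any $a\in Int(L_\varphi)$ and invoke Theorem~\ref{maintheorem-levelsets}, which produces an uncountable DC1-scrambled subset of the level set $R_\varphi(a)$. Since $R_\varphi(a)\subseteq R_\varphi$ by the definition $R_\varphi=\bigcup_{a\in L_\varphi}R_\varphi(a)$, this subset already lies in $R_\varphi$, and this case is complete.

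It remains to handle the degenerate case $Int(L_\varphi)=\emptyset$, i.e.\ $L_\varphi=\{a\}$ is a single point, so that $\int\varphi\,d\mu=a$ for every $\mu\in\mathcal M_f(X)$. I claim that then $R_\varphi=X$. Indeed, for any $x\in X$ every subsequential weak$^*$ limit $\nu$ of the empirical measures is $f$-invariant and hence satisfies $\int\varphi\,d\nu=a$; since $\varphi$ is continuous, every subsequential limit of $\frac1n\sum_{i=0}^{n-1}\varphi(f^ix)=\int\varphi\,d\big(\frac1n\sum_{i=0}^{n-1}\delta_{f^ix}\big)$ equals $a$, so the Birkhoff average converges to $a$ and $x\in R_\varphi(a)$. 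Thus $R_\varphi=R_\varphi(a)=X$. Now I would choose any nonempty compact connected $K\subseteq\mathcal M_f(X)$---for instance $K=\{\mu\}$ with $\mu$ any invariant measure, or $K=\mathcal M_f(X)$ itself, which is nonempty, convex and hence connected---and apply the saturated-set result: $G_K$ contains an uncountable DC1-scrambled subset, and $G_K\subseteq X=R_\varphi$, finishing the second case.

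The genuine content is of course hidden in the saturated-set result rather than in this reduction; once it is available the theorem is immediate from the dichotomy above. The only point needing care here is the verification that $R_\varphi=X$ in the degenerate case, which I expect to be routine from weak$^*$ compactness of $\mathcal M_f(X)$ and continuity of $\varphi$. I therefore anticipate the main obstacle not to lie in this proof at all, but in the underlying saturated-set statement---namely in simultaneously arranging, via the specification property, the two opposing density conditions (upper density one for closeness at every scale, lower density zero for separation at some fixed scale) along a Cantor-type family of orbits whose empirical measures are controlled so as to land precisely in the prescribed set $K$.
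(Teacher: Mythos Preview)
Your dichotomy and your treatment of the case $Int(L_\varphi)\neq\emptyset$ are exactly what the paper does: it invokes Theorem~\ref{maintheorem-levelsets} for some $a\in Int(L_\varphi)$ and uses $R_\varphi(a)\subseteq R_\varphi$.

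In the degenerate case $Int(L_\varphi)=\emptyset$, your argument that $R_\varphi=X$ is fine; the paper simply asserts this and then cites the Oprocha--\v{S}tef\'ankov\'a result \cite{OS} (specification gives an uncountable DC1-scrambled subset of $X$) rather than going through the saturated-set machinery. Your route via Theorem~\ref{maintheorem-DC1inSaturated} is a legitimate alternative, but as written it has a small gap: that theorem does \emph{not} apply to an arbitrary compact connected $K$; it requires some $\mu\in K$ to be a convex combination $\theta\mu_1+(1-\theta)\mu_2$ with $G_{\mu_1},G_{\mu_2}$ each containing a distal pair. So you cannot take ``$K=\{\mu\}$ with $\mu$ any invariant measure'' or $K=\mathcal M_f(X)$ without checking this. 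The fix is immediate from the paper: Proposition~\ref{distal-dense} supplies an ergodic $\mu$ with nondegenerate minimal support for which $G_\mu$ has a distal pair, and then $K=\{\mu\}$ (with $\mu_1=\mu_2=\mu$) satisfies the hypothesis of Theorem~\ref{maintheorem-DC1inSaturated}. With that choice your Case~2 goes through, and the net effect is the same as citing \cite{OS}.
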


Let us explain why Theorem \ref{Th2018May-D-regularphi} holds. If $Int(L_\varphi)\neq\emptyset$, then one can get this from Theorem \ref{maintheorem-levelsets} by taking one $a\in Int(L_\varphi)$ since $R_\varphi(a)\subseteq R_\varphi$. On the other hand, $Int(L_\varphi)=\emptyset$, then $R_\varphi=X$ and one can get this result by  \cite{OS} (or see \cite{Oprocha2007}). 
% from Theorem \ref{maintheorem-recurrent}. %We will prove Theorems \ref{maintheorem-irregular} and \ref{maintheorem-levelsets} in Section \ref{section-mainproof}.

\subsection{DC1 in  recurrence}

%The distributional chaos was defined imitating the Li-Yorke chaos. In consideration of this paper's purpose, here we just give the definition of distributional chaos of type 1(i.e the DC1). Readers can refer to \cite{SS2}\cite{LR} for more information about DC2 and DC3.
In classical study of dynamical systems, an important concept is recurrence. Recurrent points such as periodic points, minimal points are typical objects to be studied. It is known that whole recurrent points set has full measure for any invariant measure under $f$ and minimal points set is not empty\cite{Fur}.
%Poincar$\acute{e}$ proved that whole recurrent points set has full measure for any invariant measure under $f$. Birkhoff
A fundamental question in dynamical systems is to search the existence of periodic points. For systems with Bowen' specification(such as topological mixing subshifts of finite type and topological mixing uniformly hyperbolic systems), the set of periodic points is dense in the whole space \cite{Sig}. Further, many people pay attention to more refinements of recurrent points according to the 'recurrent frequency' such as weakly almost periodic points and quasi-weakly almost periodic points and measure them\cite{HYZ,ZH}. In \cite{HTW,T16} the authors considered various recurrence and showed many different recurrent levels carry strong dynamical complexity from the perspective of topological entropy. In present paper, one of our aim is to consider these different recurrent levels from the perspective of chaos.

For any $x \in X$, the orbit of $x$ is $\{f^nx\}_{n=0}^\infty$,   denoted by $orb(x,f)$. The $\omega$-limit set of $x$ is the set of all limit points of $orb(x,f)$,   denoted by $\omega_f(x)$. %If $\omega_f(x)=X$, we say $x$ is a transitive point of $f$. We denote $Trans$ the collection of all the transitive points of $f$.

\begin{Def} A point $x \in X$ is \textbf{recurrent}, if $x \in \omega_f(x)$. If $\omega_f(x)=X$, we say $x$ is a transitive point of $f$.   A point $x \in X$ is \textbf{almost periodic}, if for any open neighborhood $U$ of $x$, there exists $N \in \mathbb{N}$ such that $f^k(x) \in U$ for some $k \in [n, n+N]$ for every $n \in \mathbb{N}$. A point $x$ is called periodic, if there exists natural number $n$ such that $f^n(x)=x.$
\end{Def} 
 We denote the sets of all recurrent points, transitive points,  almost periodic points  and periodic points by $Rec$, $Trans$, $  AP$ and $Per$ respectively.  %\subsection{Chaotic in recurrence}
%\begin{Thm}\label{maintheorem1}
%Suppose that $(X,f)$ has specification property, then for any open set $U\subseteq X$, $f$ possesses {  two?????????????}    uncountable DC1-scrambled set $S_1,S_2\subseteq U$ such that $S_1\subseteq  Rec\backslash QW(f)$ $S_2\subseteq   QW(f)\backslash W(f).$
%\end{Thm}
 Now we recall some notions of   recurrence by using density. Let $S \subseteq \mathbb{N}$, we denote
$$\overline{d}(S):=\limsup_{n\to\infty}\frac{|S\cap\{0,1,\cdots,n-1\}|}{n},\ \ \underline{d}(S):=\liminf_{n\to\infty}\frac{|S\cap\{0,1,\cdots,n-1\}|}{n},$$ 
$$B^*(S):=\limsup_{|I|\to\infty}\frac{|S\cap I|}{|I|},\ \ B_*(S):=\liminf_{|I|\to\infty}\frac{|S\cap I|}{|I|},$$
where $|A|$ denotes the cardinality of the set $A$. They are called   the upper density of $S$  and   the lower density, % of $S$ respectively.
% If $\overline{d}(S) = \underline{d}(S) = d$, We call that $S$ has the density of $d$,which we denotes by $d(S)$. 
%We denote
%They are called  
 Banach upper density  and  Banach lower density of $S$ respectively.
Let $U,V \subseteq X$ be two nonempty open sets and $x \in X$. Define sets of visiting time
$$N(U,V):=\{ n \ge 1|U \cap f^{-n}(V) \not= \emptyset\}\ and\ N(x,U):=\{n \ge 1|f^n(x) \in U\}.$$
%The system $(X,f)$ is transitive if $N(U,V)\neq\emptyset$ holds for any $U,V \subseteq X$.
\begin{Def}
A point $x\in X$ is called Banach upper recurrent, if $N(x,B_\varepsilon(x))$ has positive Banach upper density where $B_\varepsilon(x)$ denotes the ball centered at $x$ with radius $\varepsilon$. Similarly, one can define the Banach lower recurrent, upper recurrent, and lower recurrent.(see \cite{HTW})
\end{Def}
Let $BR$ denote the set of all Banach upper recurrent points and let $QW,W$ denote the set of upper recurrent points and lower recurrent points respectively. Note that  $  AP$ coincides with the set of all Banach lower recurrent points. From \cite{HYZ,YYW,ZH,PW} $W,QW,BR,Rec$ all have full measure for any invariant measure but $AP$ maybe not. Note that $$  AP \subseteq  W \subseteq  QW \subseteq  BR \subseteq Rec.$$  So the recurrent set can be decomposed into several disjoint `periodic-like' recurrent level sets which reflect different recurrent frequency: 
$$ Rec= AP\sqcup (W\setminus AP)\sqcup (QW\setminus W) \sqcup (BR\setminus QW)\sqcup (Rec\setminus BR).$$  A question appeared in \cite{T16} is that 
  $$\text{\it How much diﬀerence are there between these `periodic-like' recurrences?}  $$ 
 One main basic idea firstly considered in \cite{T16} is to search which  recurrent level set carries the same dynamical complexity as the whole system, for example, by using topological entropy since $W,QW,BR,Rec$ all carry full topological entropy as the whole space. It was showed that these recurrent level sets except $Rec\setminus BR$ all have full topological entropy   studied in \cite{T16} for $QW\setminus W$ and $W\setminus AP$, \cite{HTW} for $BR\setminus QW$, \cite{DongTian2017} for $AP$. From \cite{Oprocha2007} Oprocha proved that there exists an uncountable DC1-scrambled subset in $Rec\setminus AP.$ Recall that Pikula showed in  \cite{Pik} that positive topological entropy does not imply DC1. 
 Thus, motivated by these results we can also ask the similar question from the perspective of chaos.  
%The entropy estimate on  $QW\setminus AP =(QW\setminus W)\cup (W\setminus AP)$, $BR\setminus QW$, $   AP$ and their classifications have been discussed in \cite{T16}, \cite{HTW} and \cite{DongTian2017} respectively. In other words, these recurrent levels all carry strong dynamical complexity from  the perspective of full or positive  topological  entropy. In present paper we discuss these recurrent levels from chaotic perspective which is another characteristic to describe the dynamical complexity, t
That is,  whether there is an   uncountable DC1-scrambled set in every recurrent level set of $Rec\setminus BR$, $BR\setminus QW$, $QW\setminus W$, $W\setminus AP$ and $AP$. We will mainly show there are  uncountable $DC1$-scrambled subsets in $BR\setminus QW$ and $QW\setminus W$ if the system has specification property (and we also discuss  uncountable $DC1$-scrambled subset in  $W\setminus AP$ under more assumptions and uncountable 
 $DC2$-scrambled subset in $AP$ in the last section).

%\begin{Def}
%For a collection of subsets $Z_1,Z_2,\cdots,Z_k\subseteq X(k\geq 2)$, we denote GS$\{Z_1,\\
%Z_2,\cdots,Z_k\}=\{Z_{2}\setminus Z_1,Z_{3}\setminus Z_2,\cdots,Z_k\setminus Z_{k-1}\}$ the gap sets of the sequence. We say $\{Z_i\}_{i=1}^k$ has  uncountable DC1-scrambled gap with respect to $Y(\subseteq X)$ if $S\cap Y$ possesses an   uncountable DC1-scrambled set for any $S\in \mathrm{GS}\{Z_1,Z_2,\cdots,Z_k\}$.
%\end{Def}

\begin{maintheorem}\label{maintheorem-recurrent}
Suppose that $(X,f)$ has specification property but is not uniquely ergodic. Then there exist  uncountable DC1-scrambled subsets in $  QW\setminus W$ and $  BR\setminus QW.$ Moreover, the points in these subsets can be chosen transitive. 
\end{maintheorem}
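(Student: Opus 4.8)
The plan is to reduce both statements to the existence of uncountable DC1-scrambled families of transitive points with prescribed empirical and return-time behaviour, and then to realize that behaviour using the saturated-set construction that the paper develops as its main tool (the uncountable DC1-scrambled sets underlying Theorems~\ref{maintheorem-irregular} and~\ref{maintheorem-levelsets}), in the version producing transitive points with a prescribed set of limit measures. For $x\in X$ let $V(x)$ denote the set of weak$^*$ limit points of the empirical measures $\frac1n\sum_{i=0}^{n-1}\delta_{f^ix}$. The first step is the dictionary between recurrence levels and $V(x)$: by the portmanteau theorem, $\overline d\big(N(x,B_\varepsilon(x))\big)\ge\sup_{\mu\in V(x)}\mu(B_\varepsilon(x))$ and $\underline d\big(N(x,B_\varepsilon(x))\big)\le\inf_{\mu\in V(x)}\mu\big(\overline{B_\varepsilon(x)}\big)$. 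Hence $x\in QW\setminus W$ as soon as $V(x)$ contains one measure $\mu_1$ with $x\in\supp\mu_1$ (positive upper density at every scale) and one measure $\mu_2$ with $x\notin\supp\mu_2$ (lower density $0$ at the scale $\varepsilon_0=\frac13\dist(x,\supp\mu_2)$); and $x\notin QW$ as soon as no $\mu\in V(x)$ charges a neighbourhood of $x$. The point is that $x\in BR$ is a genuinely Banach statement, invisible to $V(x)$, so the two cases will need different amounts of work.

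For $QW\setminus W$ I would argue as follows. Since $(X,f)$ has specification it admits a fully supported invariant measure $\mu_1$, and, periodic points being dense, a periodic measure $\mu_2$ with $\supp\mu_2\neq X$; the failure of unique ergodicity guarantees $\mu_1\neq\mu_2$ and, together with specification, supplies the metrically $t_0$-separated orbit segments that push apart distinct members of a scrambled family. Let $K\subseteq\mathcal M_f(X)$ be the segment joining $\mu_1$ and $\mu_2$, a compact connected set, and fix an anchor $z$ with $\dist(z,\supp\mu_2)>3\varepsilon_0$. I would then feed $K$ and $z$ into the main construction to obtain an uncountable DC1-scrambled family of transitive points, all starting within $\varepsilon_0$ of $z$ and all with $V(x)=K$. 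Each such $x$ satisfies $x\in\supp\mu_1=X$ and $\overline{B_{\varepsilon_0}(x)}\cap\supp\mu_2=\emptyset$, so by the dictionary $x\in QW\setminus W$, while $\omega_f(x)\supseteq\supp\mu_1=X$ gives transitivity.

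For $BR\setminus QW$ I would run the same scheme against a connected $K\subseteq\mathcal M_f(X)$ none of whose measures charges $B_{\varepsilon_0}(z)$ (so that $V(x)=K$ forces upper density $0$ of returns, i.e.\ $x\notin QW$), and then superimpose a third kind of block. Concretely the orbits are built by concatenating, through the bounded gaps furnished by specification, three families of segments: \emph{synchronization} segments on which every member of the family shadows the same piece (keeping pairwise distance small), \emph{separation} segments on which members shadow the $t_0$-separated gadgets according to their index prefix (forcing pairwise distance $\ge t_0$), and \emph{return} segments which repeatedly bring the orbit $\varepsilon_m$-close to $z$ at a sequence of scales $\varepsilon_m\to0$. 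The first two families, with super-exponentially growing lengths, produce $\limsup$-density $1$ and $\liminf$-density $0$ for the ``close'' times, i.e.\ the two DC1 inequalities, exactly as in the saturated case, and being kept away from $B_{\varepsilon_0}(z)$ they create no returns. The return segments are common to all members (so they only help the ``close'' count); each scale is revisited in arbitrarily long windows inside which returns to $B_{\varepsilon_m}(z)$ occur with a frequency bounded below, which yields $B^*\big(N(x,B_\varepsilon(x))\big)>0$ for every $\varepsilon$, hence $x\in BR$, while their total length is arranged to have Ces\`aro density $0$, so that neither the upper density $0$ of returns nor $V(x)=K$ is disturbed.

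The main obstacle is precisely this three-fold bookkeeping in the $BR\setminus QW$ case: the return windows must be individually long enough to force a positive Banach frequency at each scale, yet collectively sparse enough to keep the Ces\`aro density of returns equal to $0$; simultaneously the synchronization and separation windows must still realize $\limsup=1$ and $\liminf=0$ uniformly over the whole uncountable family, and the union of all visited pieces must stay dense so that every constructed point is transitive. All four requirements are met by a single inductive choice of one rapidly increasing length sequence that dominates the relevant partial sums; once the growth rates are fixed, the four density limits follow from routine estimates, and the DC1-scrambled, transitive points lie in $QW\setminus W$ and $BR\setminus QW$ as required.
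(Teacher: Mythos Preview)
Your approach for $QW\setminus W$ is essentially the paper's: choose a compact connected $K\subseteq\mathcal M_f(X)$ containing both a fully supported measure and a measure with proper support, apply the saturated-set machinery (Theorem~\ref{maintheorem-DC1inSaturated}) to obtain an uncountable DC1-scrambled set of transitive points $x$ with $V_f(x)=K$, and then read off $x\in QW\setminus W$ from Propositions~\ref{prop1} and~\ref{prop2} (equivalently, your portmanteau dictionary). The paper uses, in place of your periodic measure, an ergodic measure with nondegenerate minimal support (Proposition~\ref{distal-dense}), which directly supplies the distal pair in $G_\mu$ that Theorem~\ref{maintheorem-DC1inSaturated} requires; with a periodic measure you would still need to argue that a periodic orbit of period $\geq 2$ exists.

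For $BR\setminus QW$, however, you take a substantially harder route than necessary. You correctly note that membership in $BR$ is a Banach-density condition not read off from $V_f(x)$, and therefore propose to graft onto the construction a third family of ``return'' blocks, balanced so as to yield positive Banach upper density of returns while keeping Ces\`aro density zero and preserving both $V_f(x)=K$ and the DC1 estimates. This bookkeeping is delicate (for instance, your return blocks approach the fixed anchor $z$, not the constructed point $x$, so a further argument is needed to convert $\varepsilon_m$-proximity to $z$ into recurrence of $x$ at every scale), and it is entirely avoidable. The paper instead uses the short lemma (Proposition~\ref{Trans-BR}) that under specification \emph{every transitive point is automatically Banach upper recurrent}: specification yields a fully supported invariant measure, so the orbit of any transitive $x$ hits each ball with positive Banach upper density. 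With this in hand one simply takes $K=\{\mu_1\}$ for a single ergodic $\mu_1$ with nondegenerate minimal support, applies Theorem~\ref{maintheorem-DC1inSaturated} unchanged to produce an uncountable DC1-scrambled set of transitive points with $V_f(x)=\{\mu_1\}$, and concludes: transitivity gives $x\in BR$ for free, while $C_x=S_{\mu_1}\subsetneq X=\omega_f(x)$ gives $x\notin QW$ by Proposition~\ref{prop2}. No modification of the saturated-set construction is needed.
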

%It is still unknown whether this result is also hold for $Rec\setminus BR$.

We will prove Theorem  \ref{maintheorem-recurrent} in Section \ref{section-mainproof-combination}.

\begin{maincorollary}\label{maincorollary-transitive}
Suppose that $(X,f)$ has specification property. % but is not uniquely ergodic.
 Then there exist  uncountable DC1-scrambled subset  in $  Trans.$ 
 %Moreover, the points in these subsets can be chosen transitive. 
\end{maincorollary}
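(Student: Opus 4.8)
The plan is to deduce Corollary~\ref{maincorollary-transitive} directly from Theorem~\ref{maintheorem-recurrent}, the only apparent gap being that Theorem~\ref{maintheorem-recurrent} carries the extra hypothesis that $(X,f)$ is not uniquely ergodic, whereas the corollary assumes only the specification property. My first task is therefore to show that, under the standing assumption that $(X,d)$ is nondegenerate, the specification property already forces $(X,f)$ to be non-uniquely ergodic; once this is in hand the corollary is immediate.

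To establish non-unique ergodicity I would first recall Sigmund's theorem \cite{Sig}: the specification property implies that $Per$ is dense in $X$ and that the measures supported on periodic orbits are dense in $\mathcal{M}_f(X)$. Next I would argue that $X$ must be infinite. Indeed, the specification property implies that $(X,f)$ is topologically mixing; if $X$ were a finite discrete space with at least two points, then testing mixing on singletons $\{p\}$ would force $f^n(p)=p$ for all large $n$, hence $f=\operatorname{id}$, which is not mixing on a space with two points, a contradiction. Hence $X$ is infinite.

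With $X$ infinite and $Per$ dense, there must be infinitely many distinct periodic orbits, since any finite union of (finite) periodic orbits is a finite, and hence non-dense, subset of the infinite space $X$. Distinct periodic orbits have disjoint supports and so support distinct ergodic measures, whence $\mathcal{M}^{e}_{f}(X)$ is infinite; in particular $(X,f)$ is not uniquely ergodic. This is the only real content of the argument, and it is precisely what makes the additional hypothesis of Theorem~\ref{maintheorem-recurrent} automatic in the present setting.

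Finally, Theorem~\ref{maintheorem-recurrent} now applies and yields an uncountable DC1-scrambled subset $S$ of $QW\setminus W$ all of whose points are transitive. Since $QW\setminus W\subseteq Rec$ and every point of $S$ is transitive, we have $S\subseteq Trans$, so $S$ is the desired uncountable DC1-scrambled subset of $Trans$. I do not expect any serious obstacle here: the entire difficulty is packaged into Theorem~\ref{maintheorem-recurrent}, and this step is merely the observation that specification together with nondegeneracy rules out the uniquely ergodic case. As an alternative to invoking Sigmund's theorem, one could bypass it entirely and directly manufacture two distinct periodic orbits, hence two distinct invariant measures, using the specification property itself to shadow two separated orbit segments by periodic points.
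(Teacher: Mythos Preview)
Your approach differs from the paper's: you try to show that specification together with nondegeneracy already forces non-unique ergodicity, so that Theorem~\ref{maintheorem-recurrent} applies directly. The paper instead keeps the case split: if the system is not uniquely ergodic, Theorem~\ref{maintheorem-recurrent} applies; if it \emph{is} uniquely ergodic, then by \cite{DYM} minimal points are dense, and unique ergodicity forces every minimal set to carry the unique measure, which has full support by Proposition~\ref{specification-full-support-dense}, so the system is minimal, $Trans=X$, and the result of \cite{OS} (an uncountable DC1-scrambled set in $X$) finishes the proof.

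Your argument has a genuine gap at the crucial step. You invoke Sigmund's theorem that $Per$ is dense in $X$, and your alternative route also proposes to ``shadow two separated orbit segments by periodic points.'' Both of these rely on the \emph{strong} specification property, i.e.\ condition~(b) in Definition~\ref{definition of specification}, which guarantees that the tracing point can be taken periodic. In this paper ``specification property'' is explicitly the weak version with condition~(b) omitted, so there is no a~priori supply of periodic points at all, and neither Sigmund's density result nor your direct construction of periodic orbits is available. Your subsequent steps (infinitely many periodic orbits, hence distinct ergodic measures) therefore collapse. Whether weak specification on a nondegenerate space in fact rules out unique ergodicity is a separate, nontrivial question which the paper deliberately sidesteps; its two-case argument is cheap precisely because it does not need to resolve this.
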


Let us explain why this result holds. By assumption if the system is not uniquely ergodic, then  it can be deduced from Theorem  \ref{maintheorem-recurrent}. Otherwise, the system is uniquely ergodic. By \cite{DYM} (or see \cite{HTW}) minimal points are dense in the whole space so that the system must also be minimal. In this case $Tran=AP=X$  so that one only needs to show uncountable DC1-scrambled in $X$ which is the result of  \cite{OS} (or see \cite{Oprocha2007}). 

\subsection{Combination of Multifractal Analysis and Recurrence}

We give a   DC1 result in combined sets of multifractal analysis and recurrence.  
\begin{maintheorem}\label{maintheorem-combination}
Suppose that $(X,f)$ has specification property, $\varphi$ is a continuous function on $X$ and $Int(L_\varphi)\neq\emptyset$. Then \\
(1) there exsit   uncountable DC1-scrambled subsets in   $ I_{\varphi} \cap (QW\setminus W) $ and $ I_{\varphi}\cap (BR\setminus QW) $ respectively.   \\
(2) for any $a\in Int(L_\varphi)$,   there exsit   uncountable DC1-scrambled subsets in   $ R_{\varphi}(a)\cap (QW\setminus W) $ and $ R_{\varphi}(a)\cap (BR\setminus QW) $ respectively.   \\
Moreover, the points in these subsets can be chosen transitive. 
\end{maintheorem}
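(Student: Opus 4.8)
The plan is to obtain all four sets from the same saturated-set construction that already yields Theorems~\ref{maintheorem-irregular}, \ref{maintheorem-levelsets} and \ref{maintheorem-recurrent}, by choosing the set of limit measures so that the multifractal constraint and the prescribed recurrence level are imposed simultaneously. Recall the organising principle: for $x\in X$ write $V_f(x)$ for the set of weak$^*$ accumulation points of the empirical measures $\frac1n\sum_{i=0}^{n-1}\delta_{f^ix}$. The $\varphi$-behaviour of $x$ is controlled by $\{\int\varphi\,d\mu:\mu\in V_f(x)\}$: this set reduces to a single value $a$ exactly when $x\in R_{\varphi}(a)$, and contains two distinct values exactly when $x\in I_{\varphi}(f)$. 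The recurrence level is controlled by the position of $x$ relative to the supports of the measures in $V_f(x)$: since $\overline{d}(N(x,U))\ge\sup_{\mu\in V_f(x)}\mu(U)$ for open $U$, a measure $\mu\in V_f(x)$ charging every neighbourhood of $x$ forces $x\in QW$, while a measure $\nu\in V_f(x)$ vanishing on a fixed neighbourhood of $x$ forces $\underline{d}(N(x,B_\varepsilon(x)))=0$, hence $x\notin W$; transitivity is the statement $\omega_f(x)=X$, which follows once $V_f(x)$ contains a measure of full support. These are precisely the features realised in the proof of Theorem~\ref{maintheorem-recurrent}.

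For part~(2), fix $a\in Int(L_\varphi)$ and set $\mathcal K_a=\{\mu\in\mathcal M_f(X):\int\varphi\,d\mu=a\}$, a nonempty convex---hence connected---subset of $\mathcal M_f(X)$. Because $a$ is interior, $\mathcal K_a$ contains a measure of full support together with measures whose supports are proper; I would fix a compact connected $K\subseteq\mathcal K_a$ containing a full-support measure and at least one measure giving a prescribed neighbourhood of the base point zero mass. Feeding this $K$ into the saturated construction forces $V_f(x)=K$, so every constructed point lies in $R_{\varphi}(a)$, is transitive, and has the return statistics needed for $QW\setminus W$; replacing the schedule of block lengths by one whose approximating blocks are long but of vanishing relative density turns $\overline{d}$ into $B^*$ and lands the family in $R_{\varphi}(a)\cap(BR\setminus QW)$ instead. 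For part~(1) the only modification is to require $\int\varphi\,d\mu$ to be non-constant on $K$---possible since $Int(L_\varphi)\neq\emptyset$ supplies two invariant measures with distinct $\varphi$-integrals---which makes the $\varphi$-average of each constructed point diverge and hence places the family in $I_{\varphi}(f)$. In every case the uncountable DC1-scrambled family, and the verification of the two DC1 conditions, are produced exactly as in the proofs of Theorems~\ref{maintheorem-levelsets} and \ref{maintheorem-recurrent}, by interleaving blocks along which two constructed points agree (giving upper density one of close times) with blocks along which they are driven apart (giving lower density zero of far times).

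The main obstacle is the simultaneous compatibility of the constraints on the single datum $K$ and the single time schedule. One must confirm that $\mathcal K_a$ for part~(2), and $\mathcal M_f(X)$ for part~(1), are rich enough---under specification and with $a$ interior---to contain a connected $K$ that at once pins the $\varphi$-integral(s), contains a full-support measure for transitivity, and contains a measure separating a neighbourhood of the base point for the $QW/W$ (respectively $BR/QW$) dichotomy. More delicately, one must check that the block schedule realising the DC1 alternation does not conflict with the schedule realising the prescribed recurrence density at the base point, so that both can be run inside a single construction. I expect this bookkeeping---rather than any new dynamical input---to be the crux; once the two schedules are shown to coexist, the argument reduces to the fractal constructions already carried out for the separate Theorems~\ref{maintheorem-levelsets} and \ref{maintheorem-recurrent}.
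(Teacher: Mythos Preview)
Your plan to route everything through saturated sets $G_K$ is the right one and matches the paper's strategy. But two points in your sketch are off, and one of them is a genuine gap.

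\textbf{The $BR\setminus QW$ case.} You propose to keep a $K$ containing a full-support measure and then ``change the schedule'' so that upper density becomes Banach upper density. This cannot work as stated: if the construction still yields $V_f(x)=K$ and $K$ contains a measure of full support, then $C_x=X=\omega_f(x)$ and $x\in QW$ by Proposition~\ref{prop2}, regardless of any schedule. The paper's insight, which you are missing, is that Theorem~\ref{maintheorem-DC1inSaturated} already delivers $x\in Trans$ \emph{for every} $K$, because the construction interleaves short pieces of a transitive orbit (with vanishing relative length, hence invisible to $V_f(x)$). Transitivity therefore does not require a full-support measure in $K$. So for $BR\setminus QW$ one simply takes $K$ built from measures all of whose supports are proper closed subsets---e.g.\ convex hulls of finitely many ergodic measures with minimal support---so that $C_x\neq X$ forces $x\notin QW$, while $x\in Trans\subseteq BR$ by Proposition~\ref{Trans-BR}. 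No separate schedule is needed; the same Theorem~\ref{maintheorem-DC1inSaturated} is applied with a different $K$. This also dissolves your final ``bookkeeping'' worry: the DC1 alternation and the recurrence level are both encoded in the single choice of $K$, not in competing schedules.

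\textbf{The DC1 mechanism.} You defer to ``exactly as in the proofs of Theorems~\ref{maintheorem-levelsets} and~\ref{maintheorem-recurrent}'', but you never identify what in $K$ makes the constructed pairs DC1-scrambled. The hypothesis of Theorem~\ref{maintheorem-DC1inSaturated} is that some $\mu\in K$ is a convex combination of measures $\mu_1,\mu_2$ whose generic sets $G_{\mu_1},G_{\mu_2}$ contain distal pairs; this is what produces the ``driven apart'' blocks. The paper secures this via Proposition~\ref{distal-dense}: ergodic measures with nondegenerate minimal support are dense in $\mathcal M_f(X)$ and their generic sets always contain distal pairs. Your $K$'s must be chosen so that they contain convex combinations of such measures; this is easy (and is how the paper builds its $K_i$ in the proof of Theorem~\ref{Th-best-refinedversion}), but it needs to be said.
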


 Theorem \ref{maintheorem-combination}  imply Theorems \ref{maintheorem-irregular} and  \ref {maintheorem-levelsets}   so that we only need to prove Theorem \ref{maintheorem-combination}    in Section \ref{section-mainproof-combination}. As a corollary of Theorem \ref{maintheorem-combination}, we state a following result.

\begin{maincorollary}\label{maincorollary-transitive-mixed}
Suppose that $(X,f)$ has specification property, $\varphi$ is a continuous function on $X$ and $Int(L_\varphi)\neq\emptyset$. Then  % but is not uniquely ergodic.
   there exist  uncountable DC1-scrambled subset  in $  Trans\cap I_{\varphi}.$ And for any $a\in Int(L_\varphi)$,   there exsit   uncountable DC1-scrambled subset  in   $ R_{\varphi}(a)\cap Trans$. % respectively. 
 %Moreover, the points in these subsets can be chosen transitive. 
\end{maincorollary}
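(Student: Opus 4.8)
The plan is to read this off directly from Theorem \ref{maintheorem-combination}: its concluding ``moreover'' clause already supplies DC1-scrambled sets whose points are transitive, so the corollary will follow by checking two set inclusions. I therefore expect all the genuine work to have been carried out in Theorem \ref{maintheorem-combination}, with the deduction below being purely a matter of bookkeeping.

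For the set $Trans\cap I_{\varphi}$, I would invoke part (1) of Theorem \ref{maintheorem-combination} to produce an uncountable DC1-scrambled set $S\subseteq I_{\varphi}\cap(QW\setminus W)$ each of whose points is transitive. Since every point of $S$ is transitive we have $S\subseteq Trans$, and since $S\subseteq I_{\varphi}\cap(QW\setminus W)\subseteq I_{\varphi}$ we also have $S\subseteq I_{\varphi}$; hence $S\subseteq Trans\cap I_{\varphi}$. Being DC1-scrambled is a condition on pairs of distinct points and so passes to any subset, so $S$ is itself an uncountable DC1-scrambled subset of $Trans\cap I_{\varphi}$, as required.

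For the second statement I would fix $a\in Int(L_\varphi)$ and apply part (2) of Theorem \ref{maintheorem-combination}, obtaining an uncountable DC1-scrambled set $S_a\subseteq R_{\varphi}(a)\cap(QW\setminus W)$ with transitive points. The same two inclusions give $S_a\subseteq Trans$ and $S_a\subseteq R_{\varphi}(a)$, whence $S_a\subseteq R_{\varphi}(a)\cap Trans$, which finishes the argument.

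The only point warranting a second look---and the nearest thing to an obstacle---is conceptual rather than computational: one must be sure that the transitive scrambled points furnished by Theorem \ref{maintheorem-combination} are compatible with the weaker target constraints, i.e.\ that discarding the recurrence-level condition $QW\setminus W$ while retaining transitivity lands precisely inside $Trans\cap I_{\varphi}$ (respectively $R_{\varphi}(a)\cap Trans$). Since $Trans$ is exactly the transitivity requirement and $I_{\varphi}$ (respectively $R_{\varphi}(a)$) is already imposed in Theorem \ref{maintheorem-combination}, this compatibility is automatic, and no additional density or entropy estimate is needed.
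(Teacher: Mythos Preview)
Your proposal is correct and matches the paper's approach: the paper also treats this corollary as an immediate consequence of Theorem~\ref{maintheorem-combination}, simply stating it ``as a corollary'' without spelling out the two set inclusions you make explicit. Your bookkeeping---dropping the recurrence-level constraint $QW\setminus W$ while retaining transitivity and the multifractal condition---is exactly the intended (trivial) passage.
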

 
%According to Theorem  \ref{Th-best-refinedversion},  \ref{Th-best-refinedversion},  \ref{Th-best-refinedversion} and \ref{Th-D-regularphi}, we immediately have some apparent corollaries, which means that strong chaos appear in transitive points, irregular sets, level sets and regular sets.
%\begin{maincorollary}
%Suppose that $(X,f)$ has specification property. Then \\ 
%(1) there is an   uncountable DC1-scrambled set $S\subseteq Trans.$
%\\
%(2) If $\varphi$ is a continuous function on $X$ and $I_{\varphi}(f)\neq\emptyset$. Then there is an   uncountable DC1-scrambled set $S\subseteq Trans\cap I_{\varphi}(f).$\\
%(3) If $\varphi$ is a continuous function on $X$ and $Int(L_\varphi)\neq\emptyset$. Then for any $a\in L_\varphi$, there is an   uncountable DC1-scrambled set $S\subseteq Trans\cap R_{\varphi}(a)$. 
%\\
%(4) If $\varphi$ is a continuous function on $X$. Then there is an   uncountable DC1-scrambled set $S\subseteq Trans\cap R_{\varphi}$.
%\end{maincorollary}

\subsection{DC1 in Recurrent Level Sets Characterized by Statistical $\omega-$limit Sets}\label{section-statistical}
Recently several concepts of statistical $\omega-$limit sets were introduced in \cite{DT} (some notions also see \cite{AAN,AF}). They also can describe different levels of recurrence and some cases coincide with above classifications of Banach recurrence. 
\begin{Def}
 For $x\in X$ and $\xi=\overline{d},   \,  \underline{d},   \,  B^*,  \,   B_*$,   a point $y\in X$ is called $x-\xi-$accessible,   if for any $ \varepsilon>0,  \,  N (x,  V_\varepsilon (y))\text{ has positive   density w.  r.  t.   }\xi,  $ where     $V_\varepsilon (x)$ denotes  the  ball centered at $x$ with radius $\varepsilon$.
 Let $$\omega_{\xi}(x):=\{y\in X\,  |\,   y\text{ is } x-\xi-\text{accessible}\}.  $$  For convenience,   it is called {\it $\xi-\omega$-limit set of $x$}.
  $\omega_{B_*}(x)$ is also called {\it syndetic center} of $x$.
 %namely,   $\omega_{\xi}(x)=\{y\in X:~\textrm{for any}~\eps>0,   \xi(N(x,  V_{\eps}(y)))>0\}$.
\end{Def}
With these definitions, one can immediately note that
\begin{equation}\label{omega-order}
  \omega_{B_*}(x)\subseteq \omega_{\underline{d}}(x)\subseteq \omega_{\overline{d}}(x)\subseteq \omega_{B^*}(x)\subseteq \omega_f(x).
\end{equation}
For any $x\in X$, if $\omega_{B_*}(x)=\emptyset,$ then from \cite{DT} we know that  $x$ satisfies one and only one of following twelve cases:
  \begin{description}
  \item[Case  (1)    ] \,   $  \emptyset=\omega_{B_*}(x)\subsetneq\omega_{\underline{d}}(x)= \omega_{\overline{d}}(x)= \omega_{B^*}(x)= \omega_f(x);$
  \item[Case  (1')    ]  \,  \,   $  \emptyset=\omega_{B_*}(x)\subsetneq\omega_{\underline{d}}(x)= \omega_{\overline{d}}(x)= \omega_{B^*}(x)\subsetneq\omega_f(x);$

  \item[Case  (2)    ]  \,  \,   $\emptyset=\omega_{B_*}(x)\subsetneq    \omega_{\underline{d}}(x)=
\omega_{\overline{d}}(x)  \subsetneq \omega_{B^*}(x)= \omega_f(x) ;$
\item[Case  (2')    ]  \,  \,   $\emptyset=\omega_{B_*}(x)\subsetneq  \omega_{\underline{d}}(x)=
\omega_{\overline{d}}(x)  \subsetneq \omega_{B^*}(x)\subsetneq \omega_f(x) ;$

  \item[Case  (3)    ]  \,  \,   $    \emptyset=\omega_{B_*}(x)=\omega_{\underline{d}}(x)\subsetneq
\omega_{\overline{d}}(x)= \omega_{B^*}(x)= \omega_f(x) ;$
  \item[Case  (3')    ] \,  \,   $\emptyset=\omega_{B_*}(x)=\omega_{\underline{d}}(x)\subsetneq
\omega_{\overline{d}}(x)= \omega_{B^*}(x)\subsetneq \omega_f(x) ;$
\item[Case  (4)    ]   \,  \,
$\emptyset=\omega_{B_*}(x)\subsetneq  \omega_{\underline{d}}(x)\subsetneq
\omega_{\overline{d}}(x)= \omega_{B^*}(x)= \omega_f(x) ;$
 \item[Case  (4')    ]  \,  \,
$\emptyset=\omega_{B_*}(x)\subsetneq  \omega_{\underline{d}}(x)\subsetneq
\omega_{\overline{d}}(x)= \omega_{B^*}(x)\subsetneq \omega_f(x) ;$

  \item[Case  (5)    ]  \,  \,   $\emptyset=\omega_{B_*}(x)=  \omega_{\underline{d}}(x) \subsetneq \omega_{\overline{d}}(x)\subsetneq \omega_{B^*}(x)= \omega_f(x);$
 \item[Case  (5')    ] \,  \,   $\emptyset=\omega_{B_*}(x)=  \omega_{\underline{d}}(x) \subsetneq \omega_{\overline{d}}(x)\subsetneq \omega_{B^*}(x)\subsetneq \omega_f(x);$
\item[Case  (6)    ]   \,  \,
$\emptyset=\omega_{B_*}(x)\subsetneq \omega_{\underline{d}}(x) \subsetneq \omega_{\overline{d}}(x)\subsetneq \omega_{B^*}(x)= \omega_f(x);$
 \item[Case  (6')    ]  \,  \,
$\emptyset=\omega_{B_*}(x)\subsetneq \omega_{\underline{d}}(x) \subsetneq \omega_{\overline{d}}(x)\subsetneq \omega_{B^*}(x)\subsetneq \omega_f(x).$

 %   \item[Case  (5).    ]  \,  \,   $ \emptyset\neq\omega_{B_*}(x)= \omega_{\underline{d}}(x)= \omega_{\overline{d}}(x)
%= \omega_{B^*}(x)= \omega_f(x) ;$
 %\item[Case  (5').    ]  \,  \,   $ \emptyset\neq\omega_{B_*}(x)= \omega_{\underline{d}}(x)= \omega_{\overline{d}}(x)
%= \omega_{B^*}(x)\subsetneq \omega_f(x) .  $
  \end{description}

\begin{Thm}\label{Th-E-statisticallanguage}
Suppose that $(X,f)$ has specification property but is not uniquely ergodic, then $\{x\in Rec|\ x\ satisfies \, \,
Case\ (i)\},\ i=2,3,4,5,6$ contains an  uncountable DC1-scrambled subset in $Trans.$ Further, if $\varphi$ is a continuous function on $X$ and $I_{\varphi}(f)\neq\emptyset$, then for any $a\in Int(L_\varphi)$, the recurrent level set of $\{x\in Rec|\ x\ satisfies\ Case\ (i)\}$  contains an  uncountable DC1-scrambled subset in $Trans\cap I_{\varphi}(f)$, $Trans\cap R_{\varphi}(a)$ and $Trans\cap R_{\varphi},$ respectively, $\ i=2,3,4,5,6  .$
\end{Thm}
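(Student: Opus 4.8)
The plan is to read Theorem~\ref{Th-E-statisticallanguage} as a quantitative sharpening of Theorem~\ref{maintheorem-combination}: instead of only placing the scrambled points in $BR\setminus QW$ or $QW\setminus W$, I would pin down the exact position of all five statistical $\omega$-limit sets so that a prescribed Case $(i)$ holds, while keeping transitivity, the multifractal constraint, and the DC1 property. For $x\in X$ let $V(x)$ denote the set of weak${}^*$ limit points of the empirical measures $\frac1n\sum_{i=0}^{n-1}\delta_{f^ix}$. The starting observation is that the two prefix-type sets are completely determined by $V(x)$: one checks from the portmanteau theorem that $\omega_{\overline{d}}(x)=\overline{\bigcup_{\mu\in V(x)}\supp\mu}$ and $\omega_{\underline{d}}(x)=\bigcap_{\mu\in V(x)}\supp\mu$, whereas the interval-type quantities $\omega_{B^*}(x)$ and $\omega_f(x)$ can be strictly larger, since a block of length $\ell_k$ placed at position $p_k$ with $\ell_k=o(p_k)$ is invisible to prefix densities (hence to $V(x)$, to $\omega_{\overline d}(x)$, and to the limit of $\frac1n\sum_{i=0}^{n-1}\varphi(f^ix)$) but is seen by the Banach upper density and by $\omega_f(x)$. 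This asymmetry is the mechanism I would exploit.

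First I would choose a compact connected $K\subseteq\mathcal M_f(X)$ (connectedness is needed so that the saturated set $\{x:V(x)=K\}$ is nonempty) with prescribed common support $Y:=\bigcap_{\mu\in K}\supp\mu$ and total support $U:=\overline{\bigcup_{\mu\in K}\supp\mu}$, using that $(X,f)$ is not uniquely ergodic and has specification in order to find measures with the required supports: for $Y=\emptyset$ a segment joining two measures with disjoint supports, for $Y\neq\emptyset$ a segment of measures all containing a fixed periodic orbit. On the bulk of the orbit I concatenate, via specification, typical segments of the measures of $K$ with positive density, which yields $V(x)=K$, hence $\omega_{\overline d}(x)=U$ and $\omega_{\underline d}(x)=Y$. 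To force $\omega_{B^*}(x)=\omega_f(x)=X$ in the cases with $U\subsetneq X$ (Cases $(2),(5),(6)$), I would repeatedly splice in long full-support blocks at positions with $\ell_k=o(p_k)$; being density zero they do not perturb $V(x)$, $U$ or $Y$, yet they put $X$ into $\omega_{B^*}$ and $\omega_f$, making the point transitive and Banach upper recurrent. This realizes the five cases exactly: $Y=U\subsetneq X$ gives Case $(2)$; $Y=\emptyset,\ U=X$ gives Case $(3)$; $\emptyset\neq Y\subsetneq U=X$ gives Case $(4)$; $Y=\emptyset,\ \emptyset\neq U\subsetneq X$ gives Case $(5)$; $\emptyset\neq Y\subsetneq U\subsetneq X$ gives Case $(6)$.

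Two further requirements would be imposed on the same orbit. To guarantee $\omega_{B_*}(x)=\emptyset$ (assumed in all twelve cases, and the only nontrivial point when $Y\neq\emptyset$), I would insert, again with prefix density zero but with lengths tending to infinity, excursions shadowing single periodic orbits taken from a family whose complements cover $X$; each excursion produces an arbitrarily long interval avoiding a prescribed ball, so the Banach lower density of every return-time set is $0$, while the liminf of the prefix densities still detects $Y$. For the multifractal part I would choose $K$ inside the hyperplane $\{\mu:\int\varphi\,d\mu=a\}$, which carries the needed support freedom because $a\in Int(L_\varphi)$, so that $\frac1n\sum_{i=0}^{n-1}\varphi(f^ix)\to a$ and the points lie in $R_\varphi(a)$; for $I_\varphi$ I would instead let $K$ meet two distinct values of $\int\varphi\,d\mu$, forcing divergence, and the $R_\varphi$ statement then follows as in Theorem~\ref{Th2018May-D-regularphi}. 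Since all insertions are density zero, they are compatible with these integral limits.

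Finally, to obtain an uncountable DC1-scrambled family I would apply the paper's main technical tool—an uncountable DC1-scrambled subset inside a saturated set—to the bulk of the construction, so that all members share $V(x)=K$ and hence the same $\omega_{\overline d}=U$ and $\omega_{\underline d}=Y$; the separation is arranged on prefix-dominating blocks by driving distinct branches $\ge t_0$ apart on a fraction of times tending to one, using that the driving measures are not point masses and so their supports contain two far-apart positive-measure pieces. The density-zero insertions (the long full-support blocks and the avoiding excursions) would be performed identically for every member; being common, they only add close pairs on a set of density zero, which leaves both DC1 density conditions intact, while they lift $\omega_{B^*}$ and $\omega_f$ to $X$ and empty $\omega_{B_*}$ for every member. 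I expect the main obstacle to be precisely this simultaneous bookkeeping: arranging the prefix-density-zero blocks so that they are long enough to pump $\omega_{B^*},\omega_f$ up to $X$ and to empty $\omega_{B_*}$, yet sparse enough to leave $\omega_{\overline d}=U$, $\omega_{\underline d}=Y$, the convergence $\int\varphi\to a$ and the two DC1 density conditions untouched. The liminf/limsup definitions of the four densities provide exactly the slack for this, and the freedom granted by non-unique ergodicity—measures with genuinely different supports inside the prescribed hyperplane—is what makes all the strict inclusions, and not merely the inclusions, achievable.
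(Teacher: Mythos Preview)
Your overall strategy is sound and would work, but you are doing considerably more than the paper does. The paper's proof is only a few lines because it reuses, as a black box, the DC1-scrambled sets already built in the proof of Theorem~\ref{Th-best-refinedversion} via Theorem~\ref{maintheorem-DC1inSaturated}. That theorem already places the scrambled set inside $G_K\cap Trans$; no extra splicing is needed to force transitivity. Once the points are transitive, the paper simply invokes three standing facts for systems with specification: (i) $x\in Trans\Rightarrow x\in BR$ (Proposition~\ref{Trans-BR}), and $x\in BR\Leftrightarrow x\in\omega_{B^*}(x)$, which together with $\omega_f(x)=X$ give $\omega_{B^*}(x)=\omega_f(x)=X$ automatically; (ii) such systems are not minimal yet have dense minimal points, so every transitive orbit has $\omega_{B_*}(x)=\emptyset$; (iii) the characterisations of $QW$ and $W$ in terms of $\omega_{\overline d}$ and $C_x$. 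Thus your two layers of ``density-zero insertions'' --- the full-support blocks to pump $\omega_{B^*},\omega_f$ up to $X$, and the periodic-orbit excursions to kill $\omega_{B_*}$ --- are redundant: both conclusions come for free from transitivity. The paper then just reads off that the five specific sets $K_1,K_2,K_5,K_6,K_7$ from the proof of Theorem~\ref{Th-best-refinedversion} realise Cases $(4),(3),(2),(6),(5)$ respectively, exactly via your identifications $\omega_{\overline d}(x)=\overline{\bigcup_{\mu\in K}S_\mu}$ and the intersection/common-support picture for $\omega_{\underline d}$.

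So the difference is one of economy rather than correctness: you redo by hand what Theorem~\ref{maintheorem-DC1inSaturated} already delivers. Your route has the merit of being self-contained and of making explicit why the Banach quantities behave as they do, but it costs you the ``simultaneous bookkeeping'' you flag as the main obstacle, whereas the paper avoids that obstacle entirely by recognising that $Trans$ alone pins down $\omega_{B_*},\omega_{B^*},\omega_f$ in one stroke.
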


We will prove this theorem in in Section \ref{section-mainproof-combination}. 
Case (1) is also known if the system has more assumptions, see the last section, but Cases (1')-(6') restricted on recurrent points all are still  unknown to have DC1   or weaker ones such as Li-Yorke chaos. 
Chaotic behavior in non-recurrent points and various non-recurrent levels by using above statistical $\omega$-limit sets will be discussed in another forthcoming paper.

\subsection{DC1  in Saturated sets}
 To show above results on irregular set, level sets and different recurrence, one main proof idea is motivated by Oprocha and $\breve{\text{S}}$tef$\acute{\text{a}}$nkov$\acute{\text{a}}$'s result  in \cite{OS} (or see \cite{Oprocha2009}) that there is an   uncountable DC1-scrambled subset   in $X$ when $(X,f)$ has   specification. One can construct corresponding   uncountable DC1-scrambled subset one by one but everyone needs a long construction proof so that it is not a good choice to do these constructions directly. Recall that in the case of entropy estimate on recurrent levels, one main technique chosen in  \cite{T16,HTW} is using (transitively) saturated property which can avoid to do a long construction proof for every considered object. So here we follow the way of \cite{T16,HTW} to give a DC1 result in saturated sets.  

  Given $x\in X$, denote $V_f(x)\subseteq \mathcal M_f(X)$ the set of all accumulation points of the empirical measures
$$
\mathcal{E}_n (x):=\frac1{n}\sum_{i=0}^{n-1}\delta_{f^i (x)},
$$
where $\delta_x$ is the Dirac measure concentrate on $x$.  
  % which is firstly established by \cite{PS2}. 
%Given $x\in X$, let $\omega_T(x)$ denote the $\omega-$limit set of $x$,    $M_x$ be the limit set of the empirical measures for $x$. Define
 % $ Rec =\{x\in X\,|x\in \omega_T(x)  \}$ and  $Tran   = \{x\in X|   \omega_T(x) = X\}.$
 %Given measure $m,$ let $S_m$ denote  the support of $m$.
 %For $A\subseteq X,$ let $h_{top}(A)$ denote the topological entropy of $A$ defined by Bowen in  \cite{Bowen1} ( see Section \ref{section-prepare})
  %and given an invariant measure $\mu,$ let  $h_\mu(T)$ denote its metric entropy of $\mu$ and let $S_\mu$ denote  the support of $\mu$..
 %\begin{Def}\label{Def-saturated}
The system $(X,f)$ is called to have {\it  saturated} property, if  for any  compact connected nonempty set $K \subseteq \mathcal M_f(X),$
\begin{eqnarray} \label{eq- saturated-definition}
 G_K\neq \emptyset \,\,\text{ and }\,\,h_{top} (T,G_K )=\inf\{h_\mu (T)\,|\,\mu\in K\},
%\text{ where } G^T_{K}=\{x\in Tran|\, M_x=K   \}.
\end{eqnarray}  where $G_{K}
=\{x\in X|\, V_f(x)=K
  \} $ (called saturated set), $h_{top}(A)$ denotes the topological entropy of $A$ defined by Bowen in  \cite{Bowen1973}  and  $h_\mu(T)$ denotes its metric entropy of $\mu$.   
%On the other hand, we say that the system $T$ has {\it transitively-saturated} property or $T$ is {\it transitively-saturated}, if above equality (\ref{eq- saturated-definition}) only holds for $U=X.$ In parallel, one can define   transitively-convex-saturated and transitively-single-saturated respectively.
   % \end{Def}
The existence of saturated sets is proved by Sigmund \cite{SigSpe} for systems with uniform hyperbolicity or specification and generalized to   non-uniformly hyperbolic systems in \cite{LST}. The property on entropy estimate was firstly established by Pfister and  
  Sullivan in \cite{PS2} and then was generalized to transitively-saturated version in \cite{HTW}, provided that the system has $g$-product property (which is weaker than specification) and uniform separation property (which is weaker than expansiveness). In this subsection we aim to establish DC1    in saturated sets. A point $x\in X$ is generic for some invariant measure $\mu$ means that $V_f(x)=\mu$(or equivalently, Birkhoff averages of all continuous map converge to the integral of $\mu$.)  Let $G_\mu$ denote the set of all generic points for $\mu$.

\begin{maintheorem}\label{maintheorem-DC1inSaturated}
Suppose that $(X,f)$ has specification and $K$ be a connected non-empty compact subset of $\mathcal M_f(X)$. If there is a $\mu\in K$ such that $\mu=\theta\mu_1+(1-\theta)\mu_2\ (\mu_1=\mu_2\mathrm{\ could\ happens})$ where $\theta\in[0,1]$, and $G_{\mu_1}$, $G_{\mu_2}$ both have distal pair. Then for any non-empty open set $U\subseteq X$, there exists an   uncountable DC1-scrambled set $S_K\subseteq G_K\cap U\cap Trans$.
\end{maintheorem}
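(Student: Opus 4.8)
The plan is to graft a distributional-chaos coding onto the standard saturation construction. I would assume the machinery behind the existence of saturated sets (Sigmund; Pfister--Sullivan), which, using the specification property together with the compactness and connectedness of $K$, produces a concatenation scheme: choosing a sequence $(\nu_j)_{j\ge1}$ in $K$ that is dense in $K$ with $\lim_j d(\nu_j,\nu_{j+1})=0$ and realizing longer and longer orbit segments whose empirical measures successively approximate the $\nu_j$ yields points $x$ with $V_f(x)=K$. My construction will reproduce this backbone while (i) inserting short visiting blocks that pass near $z_1,\dots,z_k$, with $\{z_m\}$ dense in $X$, to force $\omega_f(x)=X$; (ii) starting every constructed point inside $U$ by letting its first prescribed segment shadow a point of $U$; and (iii) replacing the blocks that realize one fixed measure $\mu\in K$ by one of two far-apart variants selected by a binary code.

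The key enabling step is to manufacture separation inside a single measure class. From the hypothesis $\mu=\theta\mu_1+(1-\theta)\mu_2$ with distal pairs $(p_1,q_1)\in G_{\mu_1}\times G_{\mu_1}$ and $(p_2,q_2)\in G_{\mu_2}\times G_{\mu_2}$, I will build at each stage $k$ two orbit segments $A_k^0$ and $A_k^1$ of equal length $L_k$ such that both have empirical measure within $1/k$ of $\mu$ and such that the two segments stay at least $\delta>0$ apart at every coordinate, where $\delta=\tfrac14\min(\liminf_n d(f^np_1,f^nq_1),\ \liminf_n d(f^np_2,f^nq_2))$. Concretely, $A_k^0$ shadows a deep tail $f^{T_k}p_1,\dots$ of length $\lfloor\theta L_k\rfloor$ followed by a deep tail of $p_2$ of length $L_k-\lfloor\theta L_k\rfloor$, while $A_k^1$ shadows the corresponding tails of $q_1$ and $q_2$; taking $T_k$ large, genericity gives the empirical estimate and distality gives pointwise separation throughout. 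This is precisely where the distal-pair hypothesis is indispensable: specification alone pushes orbits together but cannot by itself supply two $\mu$-generic pieces that remain uniformly apart.

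For the uncountable family I would fix an injection $\Phi\colon\{0,1\}^{\mathbb{N}}\to\{0,1\}^{\mathbb{N}}$, $\Phi(\alpha)_n=\alpha_{v(n)}$, where $v\colon\mathbb{N}\to\mathbb{N}$ takes every value infinitely often; then distinct elements of $\mathcal I:=\Phi(\{0,1\}^{\mathbb{N}})$ differ in infinitely many coordinates. For $\beta\in\mathcal I$ I would concatenate, via specification with shadowing precision $\eps_k\downarrow0$ and bounded gaps, the stage-$k$ blocks in the order: a short common visiting block; the $\mu$-block $A_k^{\beta_k}$ of length $L_k$; and a common sweep block of total length $M_k$, with growth $M_{k-1}\ll L_k\ll M_k$ so that each block dominates the sum of all earlier lengths. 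All choices except the superscript $\beta_k$ are identical across $\beta$, so $x_\beta,x_{\beta'}$ agree up to $2\eps_k$ on every block except the $\mu$-blocks where $\beta_k\ne\beta'_k$, on which they are $\ge\delta-2\eps_k$ apart. Then the ends of the common sweep blocks give $\limsup_n\frac1n|\{i<n:d(f^ix_\beta,f^ix_{\beta'})<t\}|=1$ for every $t>0$ (using $\eps_k\to0$), while the ends of the infinitely many disagreeing $\mu$-blocks give $\limsup_n\frac1n|\{i<n:d(f^ix_\beta,f^ix_{\beta'})\ge t_0\}|=1$ with $t_0=\delta/2$. Since both variants of every $\mu$-block are $\mu$-generic and the visiting blocks have vanishing density, $V_f(x_\beta)=K$ for all $\beta$, so each $x_\beta\in G_K\cap U\cap Trans$ and $\{x_\beta\}_{\beta\in\mathcal I}$ is the desired uncountable DC1-scrambled set.

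The main obstacle will be the simultaneous bookkeeping: one must choose the stage lengths $L_k,M_k$, the number of $\nu_j$ realized per sweep, the tail depths $T_k$, the visiting-block lengths, and the precisions $\eps_k$ so that, all at once, (a) the empirical measures accumulate on exactly $K$ and nothing more, (b) the common blocks are frequent and long enough to force closeness of upper density one for every pair, and (c) the indexed $\mu$-blocks force separation of upper density one whenever the codes disagree. Checking that these requirements are compatible is the delicate but routine core; it works because the density contributions of the auxiliary visiting blocks and of the bounded specification gaps tend to zero, leaving the dominant blocks free to control each $\limsup$ separately. Everything else follows the established saturation argument and the scrambling scheme of Oprocha and $\breve{\text{S}}$tef$\acute{\text{a}}$nkov$\acute{\text{a}}$.
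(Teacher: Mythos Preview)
Your overall architecture matches the paper's: a saturation backbone of sweeps through $K$, short transitivity-forcing visits, and coded $\mu$-blocks that supply the DC1 separation. The coding scheme differs cosmetically (the paper inserts at stage $k$ coding pieces for \emph{all} digits $\xi_1,\dots,\xi_k$, so any two distinct $\xi\in\{1,2\}^\infty$ are separated at every later stage and no injection $\Phi$ is needed), but either device yields an uncountable scrambled family.

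There is, however, a genuine gap in your construction of the $\mu$-blocks. You build $A_k^0$ as one long tail of $p_1$ followed by one long tail of $p_2$. This makes the \emph{full}-segment empirical measure close to $\mu$, but the \emph{partial} empiricals are not controlled: at the end of the $p_1$-portion the block's empirical measure is $\approx\mu_1$. Since you require $L_k\gg M_{k-1}$ (which you do need for the liminf-zero half of DC1), at that moment the global empirical $\mathcal E_n(x_\beta)$ is also $\approx\mu_1$. Passing to the limit along these times yields $\mu_1\in V_f(x_\beta)$. The hypothesis places only $\mu$ in $K$, not $\mu_1$ or $\mu_2$, so this breaks $V_f(x_\beta)\subseteq K$ and hence $x_\beta\in G_K$. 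The failure is already visible when $K=\{\mu\}$ with $\mu=\tfrac12\mu_1+\tfrac12\mu_2$ and $\mu_1\ne\mu_2$.

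The paper avoids this via Lemma~\ref{lemma-mu}: instead of two long pieces, it interleaves many short alternating segments of $p_1,p_2$ (respectively $q_1,q_2$) in proportion $\theta:(1-\theta)$. This forces the partial empiricals of the coding block to lie within any prescribed neighbourhood of $\mu$ at \emph{every} time past a threshold, at the acceptable cost of downgrading your ``apart at every coordinate'' to ``apart on a set of density $>1-\delta_k$'', which is all DC1 needs. With that single replacement your scheme is essentially the paper's proof.
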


We will prove this theorem in Section \ref{section-proof-saturated}. Since an ergodic measure with nondegenerate minimal support has two generic points as a distal pair, see Proposition \ref{distal-dense} below, one has a following result as a corollary of Theorem \ref{maintheorem-DC1inSaturated}.

\begin{maincorollary}Suppose that $(X,f)$ has specification. For any ergodic measure $\mu$,   if its support is nondegenerate and minimal, then  there exists an   uncountable DC1-scrambled set $S\subseteq Trans$ such that any point in $S$ is generic for $\mu.$

\end{maincorollary}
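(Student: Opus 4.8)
The plan is to deduce the corollary directly from Theorem~\ref{maintheorem-DC1inSaturated} by specializing to the singleton $K=\{\mu\}$. First I would verify that $K=\{\mu\}$ meets the hypotheses of that theorem: a single point of $\mathcal{M}_f(X)$ is trivially non-empty, compact, and connected, and clearly $\mu\in K$.

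Next I would invoke the clause ``$\mu_1=\mu_2$ could happen'' by choosing $\mu_1=\mu_2=\mu$ and any $\theta\in[0,1]$ (say $\theta=1$), so that the required decomposition $\mu=\theta\mu_1+(1-\theta)\mu_2$ holds tautologically. The sole remaining hypothesis is that $G_{\mu_1}=G_{\mu_2}=G_\mu$ admit a distal pair, and this is exactly Proposition~\ref{distal-dense}: because $\mu$ is ergodic with nondegenerate minimal support, it has two generic points forming a distal pair, both of which lie in $G_\mu$. This is the one place where the two standing assumptions on the support (minimality and nondegeneracy) are consumed.

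With every hypothesis in place, applying Theorem~\ref{maintheorem-DC1inSaturated} with $U=X$ produces an uncountable DC1-scrambled set $S\subseteq G_{\{\mu\}}\cap X\cap Trans$. Finally I would observe that $G_{\{\mu\}}=\{x\in X:\ V_f(x)=\{\mu\}\}$ is, by the definition recalled just before the theorem, precisely the set $G_\mu$ of generic points for $\mu$, so every point of $S$ is simultaneously transitive and generic for $\mu$, which is the assertion. The genuine content is not in this deduction, which is pure bookkeeping: the substantive obstacles lie upstream, in Proposition~\ref{distal-dense} (which furnishes the distal pair) and in Theorem~\ref{maintheorem-DC1inSaturated} itself. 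Once those are granted, the corollary is an immediate specialization to a one-point $K$.
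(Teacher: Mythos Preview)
Your proposal is correct and matches the paper's own argument: the paper explicitly states that the corollary follows from Theorem~\ref{maintheorem-DC1inSaturated} once one knows (via Proposition~\ref{distal-dense}) that an ergodic measure with nondegenerate minimal support has a distal pair of generic points, and your specialization to the singleton $K=\{\mu\}$ with $\mu_1=\mu_2=\mu$ is exactly how that deduction is carried out.
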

Here $\mu$ admits to have zero metric entropy. If the system is not minimal, then above set $S$ has zero measure for $\mu$, since $S\subseteq Trans$, $S_\mu\neq X$ and by Birkhoff ergodic theorem $\mu(S_\mu\cap G_\mu)=1.$

\section{Preliminaries}

%%%%%%%%%%%%%%%%%%%%%%%%%%%%%%%%
%\subsection{Distributional Chaos and Scrambled Set}
%The chaos was first introduced in mathematic by Li and Yorke in 1975. For $(X,f)$, they defined that if there is an uncountable scrambled set $S\subseteq X$, we call that $(X,f)$ is Li-Yorke chaotic, where $S$ is called a scrambled set if for any pair of distinct two points $x,y$ of $S$ $\liminf_{n\to +\infty}d(f^nx,f^ny)=0,\ \limsup_{n\to +\infty}d(f^nx,f^ny)>0$.
%
%The distributional chaos was defined imitating the Li-Yorke chaos. In consideration of this paper's purpose, here we just give the definition of distributional chaos of type 1(i.e the DC1). Readers can refer to \cite{SS2}\cite{LR} for more information about DC2 and DC3.

\subsection{Specification Properties}
Specification was first introduced by Bowen in  \cite{Bowen}. Before giving the definition, we make a notion that for $(X,f)$ and $x,y\in X,\ a,b\in\mathbb{N}$, we say $x$ $\varepsilon$-$traces$ $y$ on $[a,b]$ if $d(f^ix,f^{i-a}y)<\varepsilon\ \forall i\in[a,b]$. The following definition mainly refers to \cite{Sig,OS}.
\begin{Def}\label{definition of specification}
 We say $(X,f)$ has \textbf{strong specfication property}, if for
any $\varepsilon > 0$, there is a positive integer $K_\varepsilon$ such that for any integer s $\ge$ 2, any set $\{y_1,y_2,\cdots,y_s\}$ of $s$ points of $X$, and any sequence\\
$$0 = a_1 \le b_1 < a_2 \le b_2 < \cdots < a_s \le b_s$$ of 2$s$ integers with $$a_{m+1}-b_m \ge K_\varepsilon$$ for $m = 1,2,\cdots,s-1$, there is a point $x$ in $X$ such that the
following two conditions hold:\
\item[(a)] $x$ $\varepsilon$-$traces$ $y_m$ on $[a_m,b_m]$ for all positive integers m $\le$ s;
\item[(b)] $f^n(x) = x$, where $n = b_s + K_\varepsilon$.\\
If the periodicity condition (b) is omitted, we say that $f$ has \textbf{specification property}.
\end{Def}

\begin{Prop}\label{specification-entropy-dense}\cite{EKW}
Suppose that $(X,f)$ has specification property, then $\mathcal{M}_f^e(X)$ is dense in $  \mathcal M_f(X)$.
\end{Prop}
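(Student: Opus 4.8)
The plan is to exploit that $\mathcal{M}_f(X)$ is a compact, convex, metrizable subset of the dual of $C(X)$ whose extreme points are precisely the ergodic measures. Since the weak$^*$ topology is metrizable, it suffices to approximate a given $\mu\in\mathcal{M}_f(X)$, with respect to finitely many continuous test functions $\varphi_1,\dots,\varphi_m$ and a tolerance $\varepsilon>0$, by an ergodic measure. By the Krein--Milman theorem $\mathcal{M}_f(X)=\overline{\mathrm{conv}}\,\mathcal{M}^e_f(X)$, so the whole statement reduces to proving that the closure $\overline{\mathcal{M}^e_f(X)}$ is convex: a closed convex set containing every extreme point of the simplex $\mathcal{M}_f(X)$ must equal $\mathcal{M}_f(X)$. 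Iterating, convexity of the closure follows once I can show that for any two ergodic measures $\nu_1,\nu_2$ and any $\lambda\in[0,1]$, the combination $\lambda\nu_1+(1-\lambda)\nu_2$ is a weak$^*$ limit of ergodic measures.

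The core construction realizes $\lambda\nu_1+(1-\lambda)\nu_2$ as an approximate average of two long ``representative'' orbit blocks glued together by specification. First I would fix generic points $y_1,y_2$ (Birkhoff averages converging to $\int\cdot\,d\nu_1$ and $\int\cdot\,d\nu_2$) and integers $N_1,N_2$ with $N_1/(N_1+N_2)\approx\lambda$, both so large that the finite Birkhoff averages of each $\varphi_j$ along the segments $B_i=(y_i,fy_i,\dots,f^{N_i-1}y_i)$ lie within $\varepsilon/4$ of $\int\varphi_j\,d\nu_i$. Fixing a small $\delta$ and the corresponding gap length $K_\delta$ from the specification property, I would then use specification to $\delta$-trace any prescribed concatenation of approximate copies of the blocks $B_1$ and $B_2$, separated by gaps of length $K_\delta$. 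This produces, for each one-sided symbol sequence $\omega\in\{1,2\}^{\mathbb N}$, a point $x_\omega$ whose orbit successively $\delta$-traces $B_{\omega_0},B_{\omega_1},\dots$.

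To obtain a genuinely \emph{ergodic} measure near the mixture --- rather than merely a generic point for the (non-ergodic) mixture itself --- I would transport an ergodic model measure from the symbolic side. Choosing a Bernoulli measure on $\{1,2\}^{\mathbb N}$ with symbol probabilities tuned so that the expected time-fractions spent in $B_1$- and $B_2$-blocks are $\lambda$ and $1-\lambda$, one induces an $f$-invariant measure $\nu$ on $X$ supported on the closure of the coded orbits. Because a $\nu$-typical point sees $B_1$- and $B_2$-blocks with frequencies $\lambda$ and $1-\lambda$ at every scale, while the fixed-length gaps contribute only a fraction $O(K_\delta/\min(N_1,N_2))\to0$, one gets $|\int\varphi_j\,d\nu-(\lambda\int\varphi_j\,d\nu_1+(1-\lambda)\int\varphi_j\,d\nu_2)|<\varepsilon$ for all $j$ once the blocks are long and the gaps comparatively short. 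The measure $\nu$ should be ergodic because it is modelled on the ergodic Bernoulli measure through a coding that intertwines the dynamics up to a bounded tower (the block-plus-gap return structure).

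The step I expect to be the main obstacle is exactly this last one: making the coding rigorous and deducing ergodicity. Specification yields only $\delta$-tracing rather than an exact factor map, and the gaps have fixed length while the blocks vary, so the symbolic system sits over $X$ as a suspension-type tower with bounded but non-constant roof. I would handle this by extracting a genuine closed $f$-invariant subsystem from the coded points, setting up the induced-map/tower correspondence with the Bernoulli base, and checking that the roof function is bounded (so integrals transfer with controlled error) and that ergodicity of the base pushes forward to $\nu$. Letting $\delta,\varepsilon\to0$ and the block lengths $\to\infty$ then exhibits $\lambda\nu_1+(1-\lambda)\nu_2$ as a weak$^*$ limit of ergodic measures, which via the Krein--Milman reduction completes the proof.
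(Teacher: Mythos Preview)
The paper does not prove this proposition; it simply cites \cite{EKW}, where a stronger entropy-density statement is established. So there is no in-paper argument to compare against, and I will assess your proposal on its own merits.

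Your Krein--Milman reduction is sound: if $A:=\overline{\mathcal M_f^e(X)}$ is convex then, being closed and containing every extreme point of the simplex $\mathcal M_f(X)$, it equals $\mathcal M_f(X)$; and convexity of $A$ does follow from the two-measure case by passing to limits. The block-gluing construction via specification is also standard and correct.

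The genuine gap is exactly where you locate it, and your proposed fix does not close it. Specification gives \emph{existence} of a $\delta$-tracing point for each $\omega\in\{1,2\}^{\mathbb N}$, but not uniqueness, so $\omega\mapsto x_\omega$ is not a well-defined map --- let alone a measurable or continuous one --- and there is nothing along which to push the Bernoulli measure forward. The remedy you sketch (``extract a closed invariant subsystem and set up a tower over the Bernoulli base'') presupposes precisely the coding map whose existence is the problem: without expansiveness or some uniqueness-of-shadowing, there is no return map on a base transverse to the block structure and no factor relation to the shift, so ergodicity of the Bernoulli measure has nothing to transfer through.

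A cleaner route avoids coding entirely. First absorb the mixture into a \emph{single} block: use specification once to glue roughly $\lambda L$ copies of $B_1$ followed by $(1-\lambda)L$ copies of $B_2$ into one segment $(z,fz,\dots,f^{n-1}z)$ whose empirical measure is already $\varepsilon$-close to $\lambda\nu_1+(1-\lambda)\nu_2$. With $L=n+K_\delta$, the set
\[
Y=\bigcap_{j\ge0} f^{-jL}\bigl\{x: d(f^ix,f^iz)\le\delta,\ 0\le i\le n-1\bigr\}
\]
is nonempty (specification plus compactness), closed, and $f^{L}$-invariant, and \emph{every} point of $Y$ has all its time-$kL$ empirical measures within $O(\varepsilon)+O(K_\delta/n)$ of the target. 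Hence every $f^{L}$-invariant measure on $Y$ --- in particular any $f^L$-ergodic one, which exists because $Y$ is compact and nonempty --- already lies in the prescribed neighbourhood; averaging it over $f,\dots,f^{L-1}$ yields an $f$-ergodic measure with the same property. This is essentially the EKW mechanism stripped of its entropy bookkeeping, and it replaces your Bernoulli transfer with the much softer fact that a nonempty compact invariant set always carries ergodic measures.
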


\begin{Prop}\label{specification-full-support-dense}\cite{Sig}
A dynamical system $(X,f)$ with specification property has measure with full support. Moreover, the set of such measure is dense in $ \mathcal M_f(X)$.
\end{Prop}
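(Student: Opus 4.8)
The plan is to separate the statement into an existence claim (some invariant measure has full support) and the density claim, and to observe that the density claim follows cheaply from existence. Suppose $\mu_0\in\mathcal M_f(X)$ with $\supp(\mu_0)=X$ has been produced. For an arbitrary $\nu\in\mathcal M_f(X)$ and $t\in(0,1]$ set $\mu_t=t\mu_0+(1-t)\nu$. Each $\mu_t$ is again $f$-invariant, satisfies $\mu_t(V)\ge t\,\mu_0(V)>0$ for every nonempty open $V$ (so $\supp(\mu_t)=X$), and $\mu_t\to\nu$ in the weak$^*$ topology as $t\to0^+$. Hence the fully supported measures are dense in $\mathcal M_f(X)$, and the whole proposition reduces to exhibiting a single $\mu_0$ of full support.

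To build $\mu_0$ I would first fix a countable basis $\{U_j\}_{j\ge1}$ of nonempty open subsets of $X$ (available since a compact metric space is second countable) and, for each $j$, construct an invariant measure $\mu_j$ with $\mu_j(U_j)>0$. For this, choose $y\in U_j$ and $\varepsilon>0$ with $\overline{B_\varepsilon(y)}\subseteq U_j$, and let $K=K_\varepsilon$ be the specification constant. Applying the specification property to the constant data $y_1=y_2=\cdots=y$, trace $y$ on consecutive blocks of a fixed large length $L$ separated by gaps of length $K$; this yields a point $x$ whose orbit lies in $B_\varepsilon(y)$ on a set of times of lower density at least $L/(L+K)$. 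Consequently, along the block endpoints $n=m(L+K)$ one has $\mathcal{E}_n(x)\big(\overline{B_\varepsilon(y)}\big)\ge L/(L+K)$; passing to a weak$^*$ convergent subsequence with limit $\mu_j\in V_f(x)\subseteq\mathcal M_f(X)$ and invoking the Portmanteau inequality for the closed set $F=\overline{B_\varepsilon(y)}$ (where it runs in the favorable direction, $\mu_j(F)\ge\limsup_n\mathcal{E}_n(x)(F)$), we obtain $\mu_j(F)\ge L/(L+K)>0$, whence $\mu_j(U_j)>0$.

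With the $\mu_j$ in hand I would set $\mu_0=\sum_{j\ge1}2^{-j}\mu_j$. This is a Borel probability measure, and $f$-invariance is preserved under the countable convex combination since $f_*\mu_0=\sum_j2^{-j}f_*\mu_j=\sum_j2^{-j}\mu_j=\mu_0$. For any nonempty open $V$ there is a basic set $U_j\subseteq V$, so $\mu_0(V)\ge2^{-j}\mu_j(U_j)>0$; thus $\supp(\mu_0)=X$, which together with the first paragraph finishes the proof.

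The only genuinely quantitative point is the middle step: producing, for each target open set, an invariant measure that charges it by a definite amount. The specification property is exactly what makes this possible, as the controlled gap length $K_\varepsilon$ forces the traced orbit to spend a frequency bounded below near $y$, and the Portmanteau inequality for a closed ball converts this frequency bound into a lower bound on the limiting measure. The remaining ingredients — invariance of weak$^*$ limit points of empirical measures, invariance under convex and countable combinations, and weak$^*$ continuity of $t\mapsto\mu_t$ — are routine, so I do not expect a serious obstacle beyond bookkeeping.
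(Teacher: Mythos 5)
The paper itself does not prove this proposition — it is quoted from [Sig] — so your proposal must be judged against the standard argument, and in outline it matches it: reduce density to existence via the convex interpolation $t\mu_0+(1-t)\nu$ (correct: invariance, full support for $t>0$, and weak$^*$ convergence to $\nu$ are all immediate), and build $\mu_0=\sum_j 2^{-j}\mu_j$ from invariant measures $\mu_j$ charging the elements of a countable basis. However, the one quantitative step contains a genuine error coming from a misreading of the tracing definition. In Definition 2.1, ``$x$ $\varepsilon$-traces $y$ on $[a,b]$'' means $d(f^i x, f^{i-a}y)<\varepsilon$ for all $i\in[a,b]$: the shadowing orbit follows the \emph{orbit segment} $y, fy,\dots,f^{b-a}y$, not the single point $y$. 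Since the orbit of $y$ may leave $B_\varepsilon(y)$ immediately and never return, your claim that the orbit of $x$ lies in $B_\varepsilon(y)$ with lower density at least $L/(L+K)$ is false in general; tracing only guarantees $f^{a_m}x\in B_\varepsilon(y)$ at the \emph{start} of each block, giving density $1/(L+K)$. Fortunately your argument only needs positivity, so the repair is trivial: either keep the bound $1/(L+K)$, or take degenerate blocks $a_m=b_m$ spaced $K_\varepsilon$ apart, which yields visit frequency at least $1/(K_\varepsilon+1)$ and hence, via the Portmanteau inequality exactly as you use it, $\mu_j\bigl(\overline{B_\varepsilon(y)}\bigr)>0$.

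A second, smaller gap: the specification property as defined applies only to finitely many blocks, so the single point $x$ tracing $y$ on infinitely many blocks does not come directly from the definition. You must either pass to a limit of points $x_s$ tracing $s$ blocks and use continuity of $f$ (this is precisely the device used in the paper's Lemma 3.3), or avoid the limit point altogether by taking a weak$^*$ limit of the empirical measures $\mathcal{E}_{n_s}(x_s)$ with varying base points, which is still $f$-invariant because $n_s\to\infty$ and $f_*\mathcal{E}_n(x)-\mathcal{E}_n(x)=\frac1n(\delta_{f^n x}-\delta_x)\to 0$. With these two repairs — the corrected visit frequency and the limit argument — your proof is complete and is essentially the classical one behind Sigmund's result.
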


%\subsection{Minimality}
%A subset $A\subseteq X$ is called an invariant set of $f$, if $f(A)\subseteq A$. A subset $M\subseteq X$ is called a minimal set of $f$, if it is a nonempty closed invariant set of, and no proper subset of $M$ has this property. We called the point in a minimal set the minimal point. Recall that $M$ is a minimal set of $f$ iff $\omega_f(x)=M$ for all $x\in M$(see \cite{Gmin}). It is known that $x$ is a minimal point iff $x\in AP$. So, we denote $  AP$ also the  set of all minimal points.

%Given $(X,f)$,$(Y,g)$, if there is a continuous surjection $\pi:X\rightarrow Y$ with $\pi\circ f=g\circ \pi$, we say $f$ and $g$ are semiconjugate. The map $\pi$ is called a factor map, and $(Y,g)$ is called the factor system of $(X,f)$.
%\begin{Prop}\label{prop-factor-of-minimal}
%Suppose $(Y,g)$ is a factor system of $(X,f)$ with the factor map $\pi$ then
%\item[(1)] $fAP=  AP$;
%\item[(2)] $\textbf{  AP}(f^n)=  AP, \forall n\in\mathbb{N}$;
%\item[(3)] $\piAP=\textbf{  AP}(g)$.
%\end{Prop}

\subsection{Levels of Recurrence}

Let us recall some equivalent statements of recurrence  referring to \cite{HYZ,YYW,ZH,HTW}. For a measure $\mu$, define the support of $\mu$ by $S_\mu:=supp(\mu)=\{x\in X|\ \mu(U)>0\ $for any neighborhood $U$ of $x\}.$  
\begin{Prop}\label{prop1} \cite{HYZ}
For $(X,f)$, let $x \in Rec$. Then the following conditions are equivalent.
\item[(a)] $x \in  W$;
%\item[(b)] $\underline{d}(N(x,B(x,\varepsilon))) > 0\ for\ any\ \varepsilon > 0$;
\item[(b)] $x \in C_x = S_\mu\ for\ any\ \mu \in V_f(x)$;
\item[(c)] $S_\mu = \omega_f(x)\ for\ any\ \mu \in V_f(x).$
\end{Prop}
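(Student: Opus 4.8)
The plan is to prove the cycle (a)$\Rightarrow$(c)$\Rightarrow$(b)$\Rightarrow$(a), funnelling everything through one auxiliary equivalence: for $x\in Rec$,
$$x\in W \quad\Longleftrightarrow\quad x\in S_\mu \text{ for every } \mu\in V_f(x).$$
Before proving it I would record two standard facts about the empirical measures $\mathcal E_n(x)$. Every $\mu\in V_f(x)$ is $f$-invariant and satisfies $S_\mu\subseteq\omega_f(x)$: if $z\notin\omega_f(x)$, then $f^i(x)$ lies in a fixed neighbourhood of $z$ only finitely often, so $\mathcal E_{n_k}(x)$ gives that neighbourhood mass tending to $0$ and the open-set half of Portmanteau forces $\mu$ to vanish near $z$. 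Moreover each $S_\mu$ is closed and forward invariant, i.e. $f(S_\mu)\subseteq S_\mu$, because invariance of $\mu$ together with continuity of $f$ gives $\mu(V)=\mu(f^{-1}V)>0$ for every open $V\ni f(z)$ whenever $z\in S_\mu$.

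For the auxiliary equivalence I would argue both implications by weak$^*$ compactness and Portmanteau. If $x\in W$, then $\underline d(N(x,U))>0$ for every neighbourhood $U$ of $x$; picking $n_k$ with $\mathcal E_{n_k}(x)\to\mu$ and a smaller open $U'\ni x$ with $\overline{U'}\subseteq U$, the closed-set half of Portmanteau yields $\mu(\overline{U'})\ge\limsup_k\mathcal E_{n_k}(x)(\overline{U'})\ge\underline d(N(x,U'))>0$, so $\mu(U)>0$; as $U$ was arbitrary, $x\in S_\mu$. Conversely, if $x\notin W$ there is a neighbourhood $U$ of $x$ with $\underline d(N(x,U))=0$, so along some $n_k$ one has $\mathcal E_{n_k}(x)(U)\to0$; passing to a weak$^*$ limit $\nu$, the open-set half of Portmanteau gives $\nu(U)\le\liminf_k\mathcal E_{n_k}(x)(U)=0$, whence $x\notin S_\nu$.

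With the equivalence in hand the cycle is short. (a)$\Rightarrow$(c): $x\in W$ gives $x\in S_\mu$ for all $\mu$, so by forward invariance and closedness $\overline{orb(x,f)}\subseteq S_\mu$, hence $\omega_f(x)\subseteq S_\mu$; combined with $S_\mu\subseteq\omega_f(x)$ this forces $S_\mu=\omega_f(x)$ for every $\mu$, which is (c). (c)$\Rightarrow$(b): all supports now equal $\omega_f(x)$, and taking $C_x$ to be the minimal center of attraction, for which the standard identity $C_x=\overline{\bigcup_{\mu\in V_f(x)}S_\mu}$ holds, we get $C_x=\omega_f(x)=S_\mu$; since $x\in Rec$ means $x\in\omega_f(x)=C_x$, condition (b) follows. (b)$\Rightarrow$(a): $x\in C_x=S_\mu$ gives $x\in S_\mu$ for every $\mu$, and the equivalence returns $x\in W$.

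The technical heart, and the step I expect to need the most care, is the density-versus-support equivalence, precisely because the two implications must use opposite halves of Portmanteau (closed sets to push mass onto the limit measure, open sets to remove it) and one must shrink neighbourhoods correctly so that the liminf/limsup of the empirical frequencies survives the passage to the weak$^*$ limit. The one external ingredient I would isolate and either cite or prove separately is the description $C_x=\overline{\bigcup_{\mu\in V_f(x)}S_\mu}$ of the measure center, since the meaning of $C_x$ is what makes (b) a genuine statement rather than a tautology; everything else is forced by invariance of the limit measures and by $x$ being recurrent.
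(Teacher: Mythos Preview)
The paper does not supply its own proof of this proposition: it is quoted verbatim from \cite{HYZ} as background, with no argument given. So there is no paper proof to compare your proposal against; what I can do is assess your argument on its own terms.

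Your proof is correct. The auxiliary equivalence ``$x\in W$ iff $x\in S_\mu$ for all $\mu\in V_f(x)$'' is handled cleanly via the two halves of Portmanteau, and the cycle (a)$\Rightarrow$(c)$\Rightarrow$(b)$\Rightarrow$(a) goes through exactly as you describe once you use forward invariance and closedness of $S_\mu$ together with the containment $S_\mu\subseteq\omega_f(x)$. One small simplification relative to your write-up: in this paper $C_x$ is \emph{defined} to be $\overline{\bigcup_{\mu\in V_f(x)}S_\mu}$ (see Section~\ref{section-mainproof-combination}), so the identity you flag as an ``external ingredient'' is here a definition and needs no separate justification. In the original source \cite{HYZ} the notation $C_x$ denotes the minimal centre of attraction and the equality with the closure of the union of supports is a genuine lemma, so your caution is appropriate if you are writing for a reader who knows the classical definition; but within the present paper's conventions that step collapses.
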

\begin{Prop}\label{prop2}\cite{HYZ}
For $(X,f)$, let $x \in Rec$. Then the following conditions are equivalent.
\item[(a)] $x \in  QW$;
%\item[(b)] $\overline{d}(N(x,B(x,\varepsilon))) > 0\ for\ any\ \varepsilon > 0;$
\item[(b)] $x \in C_x$;
\item[(c)] $C_x = \omega_f(x)$.
%\item[(e)] $\omega_f(x)=\overline{\cup_{\mu \in V_f(x)}S_\mu}$
\end{Prop}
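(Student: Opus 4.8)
The plan is to route all three conditions through the \emph{measure center} $C_x:=\overline{\bigcup_{\mu\in V_f(x)}\supp\mu}$ and a single dictionary relating visiting frequencies to the measures in $V_f(x)$. First I would establish the sandwich estimate: for every open set $U\subseteq X$,
$$\sup_{\mu\in V_f(x)}\mu(U)\ \le\ \overline{d}\big(N(x,U)\big)\ \le\ \sup_{\mu\in V_f(x)}\mu(\overline{U}),$$
using that $\overline{d}(N(x,U))=\limsup_n\mathcal E_n(x)(U)$ up to the negligible $i=0$ term. The left inequality comes from taking $\mu\in V_f(x)$ as a weak$^*$ limit of some $\mathcal E_{m_k}(x)$ and invoking the Portmanteau bound $\mu(U)\le\liminf_k\mathcal E_{m_k}(x)(U)$ for open $U$; the right inequality comes from choosing $n_k$ realizing the $\limsup$, extracting a weak$^*$ limit $\nu\in V_f(x)$, and combining $\mathcal E_{n_k}(x)(U)\le\mathcal E_{n_k}(x)(\overline U)$ with Portmanteau for the closed set $\overline U$.

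For (a)$\Leftrightarrow$(b) I would reason as follows. If $x\in C_x$, then for each $\varepsilon>0$ the ball $B_{\varepsilon/2}(x)$ meets $\supp\mu$ for some $\mu\in V_f(x)$; choosing $z$ in that intersection gives $\mu(B_\varepsilon(x))\ge\mu(B_{\varepsilon/2}(z))>0$ since $z$ is in the support, and the left inequality yields $\overline{d}(N(x,B_\varepsilon(x)))>0$, so $x\in QW$. Conversely, if $x\in QW$ then for each $\varepsilon'<\varepsilon$ the right inequality forces some $\mu\in V_f(x)$ with $\mu(\overline{B_{\varepsilon'}(x)})>0$, hence $\supp\mu\cap\overline{B_{\varepsilon'}(x)}\ne\emptyset$; any such point lies in $B_\varepsilon(x)\cap\bigcup_\mu\supp\mu$, and letting $\varepsilon\to0$ gives $x\in\overline{\bigcup_\mu\supp\mu}=C_x$.

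For (b)$\Leftrightarrow$(c) the inclusion $C_x\subseteq\omega_f(x)$ is always available: if $z\in\supp\mu$ for $\mu\in V_f(x)$, then $\mu(U)>0$ for every neighborhood $U$ of $z$, so by the left inequality $N(x,U)$ is infinite and $z\in\omega_f(x)$; as $\omega_f(x)$ is closed this gives $C_x\subseteq\omega_f(x)$, whence (c)$\Rightarrow$(b) is immediate from $x\in Rec$, i.e. $x\in\omega_f(x)=C_x$. The remaining implication (b)$\Rightarrow$(c) is the heart of the matter, and the key observation I would use is that $C_x$ is closed and forward invariant: each support satisfies $f(\supp\mu)\subseteq\supp\mu$ for invariant $\mu$ (if $z\in\supp\mu$ and $V\ni f(z)$ is open, then $\mu(V)=\mu(f^{-1}V)>0$), so $f(C_x)\subseteq C_x$. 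Granting $x\in C_x$, forward invariance puts the whole forward orbit $\{f^nx\}_{n\ge0}\subseteq C_x$, and closedness gives $\omega_f(x)\subseteq\overline{\{f^nx\}}\subseteq C_x$; combined with the universal inclusion $C_x\subseteq\omega_f(x)$ this yields $C_x=\omega_f(x)$.

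The step I expect to require the most care is the passage from visiting frequency to measure, that is, making the sandwich estimate rigorous (the $U$ versus $\overline U$ discrepancy and the correct Portmanteau direction for open versus closed sets); once that dictionary and the closed forward-invariance of $C_x$ are in place, every implication reduces to a short deduction.
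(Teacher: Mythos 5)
Your proof is correct. One point of comparison is moot: the paper does not prove this proposition at all --- it is stated as an imported result with the citation \cite{HYZ}, so you have supplied a complete, self-contained argument where the paper only quotes. Your route is the natural one and matches the standard argument in the literature: the ``sandwich'' dictionary $\sup_{\mu\in V_f(x)}\mu(U)\le\overline{d}(N(x,U))\le\sup_{\mu\in V_f(x)}\mu(\overline{U})$ is exactly the right bridge (both Portmanteau directions are used correctly, the $i=0$ discrepancy between $N(x,U)$ and $\mathcal{E}_n(x)(U)$ is genuinely negligible, and the extracted limit $\nu$ does lie in $V_f(x)$ by compactness of $\mathcal{M}(X)$), and every implication then reduces to a short deduction as you claim: (a)$\Leftrightarrow$(b) via $\mu(B_\varepsilon(x))\ge\mu(B_{\varepsilon/2}(z))>0$ for $z\in\supp\mu\cap B_{\varepsilon/2}(x)$ in one direction and $\mu(\overline{B_{\varepsilon'}(x)})>0\Rightarrow\supp\mu\cap\overline{B_{\varepsilon'}(x)}\ne\emptyset$ (since $\mu(\supp\mu)=1$) in the other; the universal inclusion $C_x\subseteq\omega_f(x)$; and (b)$\Rightarrow$(c) via closed forward invariance of $C_x$. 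Two small points deserve explicit mention if this were written out in full: the step $f(\supp\mu)\subseteq\supp\mu$ uses that every $\mu\in V_f(x)$ is $f$-invariant, which is the standard (but citable) fact that weak$^*$ limits of empirical measures along $n_k\to\infty$ are invariant; and passing invariance to the closure uses $f(\overline{A})\subseteq\overline{f(A)}$ for continuous $f$. It is also worth observing, as your argument makes visible, that recurrence of $x$ is used only in (c)$\Rightarrow$(b), which is precisely why the proposition is stated for $x\in Rec$.
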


%Readers can refer to \cite{HYZ} for the proof of the two propositions above.

\begin{Prop}\label{Trans-BR}
For $(X,f)$ with specification property, $x\in Trans$ implies $x\in BR$.
\end{Prop}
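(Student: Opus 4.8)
The plan is to fix $\varepsilon>0$ and prove that the return-time set $N(x,B_\varepsilon(x))$ has positive Banach upper density; since $\varepsilon$ is arbitrary this yields $x\in BR$. Set $K:=K_{\varepsilon/4}$, the specification constant attached to $\varepsilon/4$. The key idea is to first manufacture, for each large $s$, an auxiliary point $z_s$ whose orbit passes within $\varepsilon/4$ of $x$ at $s$ instants that are equally spaced with gap $K$ (a gap independent of $s$), and then to transport this regular return pattern back onto the orbit of $x$ itself using transitivity.

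To build $z_s$ I would apply the specification property to the $s$ points $y_1=\cdots=y_s=x$ together with the length-one time windows $[a_i,a_i]$, where $a_i:=(i-1)K$ for $i=1,\dots,s$. These satisfy $0=a_1<a_2<\cdots<a_s$ and the admissibility condition $a_{i+1}-a_i=K\ge K_{\varepsilon/4}$, so specification gives a point $z_s\in X$ with $d(f^{a_i}(z_s),x)<\varepsilon/4$ for every $i$. Thus the orbit of $z_s$ sits within $\varepsilon/4$ of $x$ at the $s$ equally spaced times $0,K,2K,\dots,(s-1)K$.

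Next I would transfer this to $x$. Because $x$ is transitive, $\omega_f(x)=X\ni z_s$. Put $L_s:=(s-1)K$; by uniform continuity of the finitely many maps $f^0,\dots,f^{L_s}$ on the compact space $X$ there is $\delta_s>0$ such that $d(u,v)<\delta_s$ forces $d(f^i(u),f^i(v))<\varepsilon/4$ for all $0\le i\le L_s$. Transitivity then provides a time $t_s\ge 1$ with $d(f^{t_s}(x),z_s)<\delta_s$, whence $d(f^{t_s+i}(x),f^i(z_s))<\varepsilon/4$ for $0\le i\le L_s$. Combining with the previous estimate at $i=a_j$ gives $d(f^{t_s+a_j}(x),x)\le d(f^{t_s+a_j}(x),f^{a_j}(z_s))+d(f^{a_j}(z_s),x)<\varepsilon/2<\varepsilon$ for each $j=1,\dots,s$. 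Hence the interval $I_s:=[t_s,t_s+L_s]$ contains at least the $s$ elements $t_s+a_j$ of $N(x,B_\varepsilon(x))$, so
$$\frac{|N(x,B_\varepsilon(x))\cap I_s|}{|I_s|}\ \ge\ \frac{s}{(s-1)K+1}.$$
Since $|I_s|=(s-1)K+1\to\infty$ and the right-hand side tends to $1/K$, letting $s\to\infty$ gives $B^*(N(x,B_\varepsilon(x)))\ge 1/K>0$, which is exactly what is needed.

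The only delicate point is the transfer step: transitivity merely asserts that the orbit of $x$ comes close to $z_s$ at a single instant, and without expansiveness this closeness need not propagate. The remedy is that for each fixed $s$ the required shadowing length $L_s$ is finite, so uniform continuity of $f^0,\dots,f^{L_s}$ supplies an admissible $\delta_s$ (with $\delta_s\to 0$ and $t_s$ growing as $s\to\infty$). Everything else is the elementary density computation above, and note that the argument invokes only the tracing clause (a) of the specification property, so the periodicity clause plays no role.
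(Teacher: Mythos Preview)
Your argument is correct. The construction via specification of the auxiliary points $z_s$ with $s$ equally spaced $\varepsilon/4$-visits to $x$, followed by the transfer to the orbit of $x$ through transitivity and uniform continuity of $f^0,\dots,f^{L_s}$, is clean and gives the explicit lower bound $B^*(N(x,B_\varepsilon(x)))\ge 1/K_{\varepsilon/4}$.

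Your route, however, is genuinely different from the paper's. The paper does not argue directly: it simply invokes Proposition~\ref{specification-full-support-dense} (specification yields an invariant measure of full support) together with \cite[Lemma~4.3]{HTW}, so the Banach upper recurrence of a transitive point is obtained via a measure-theoretic detour through an external reference. Your proof is entirely self-contained and elementary, using only the tracing clause of specification and compactness; it also yields quantitative information (the density bound $1/K_{\varepsilon/4}$) that the cited approach does not make visible. The paper's route is shorter on the page but opaque without consulting \cite{HTW}; yours would make a good replacement if one wanted the article to stand on its own for this step.
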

Proposition \ref{Trans-BR} is direct consequence by combining Proposition \ref{specification-full-support-dense} and  \cite[Lemma 4.3]{HTW}.

\section{ Proof of Theorem \ref{maintheorem-DC1inSaturated}} \label{section-proof-saturated}

  One main proof idea is motivated by Oprocha and $\breve{\text{S}}$tef$\acute{\text{a}}$nkov$\acute{\text{a}}$'s result  in \cite{OS} that there is   uncountable DC1-scrambled subset   in $X$ when $(X,f)$ has   specification. 
Before proof we introduce some basic facts and lemmas.

\subsection{Ergodic Average}
We write $\mathbb{N}=\{0,1,2,\cdots\}$ and $\mathbb{N}^+=\{1,2,\cdots\}$. If $r,s\in\mathbb{N},r\leq s$, we set $[r,s]:=\{j\in\mathbb{N}|\ r\leq j\leq s\}$, and the cardinality of a finite set $\Lambda$ is denoted by $|\Lambda|$. We set
$$
\langle f,\mu \rangle\ :=\ \int_Xfd\mu.
$$
There exists a countable and separating set of continuous functions $\{f_1,f_2,\cdots\}$ with $0\leq f_k(x)\leq 1$, and such that
$$
d(\mu,\nu)\ :=\ \parallel\mu-\nu\parallel\ :=\ \sum_{k\geq 1}2^{-k}\mid\langle f_k,\mu-\nu \rangle\mid
$$
defines a metric for the weak*-topology on $ \mathcal M_f(X)$. We refer to \cite{PS2} and use the metric on $X$ as following defined by Pfister and Sullivan.
$$
d(x,y) := d(\delta_x,\delta_y),
$$
which is equivalent to the original metric on $X$. Readers will find the benefits of using this metric in our proof later.
\begin{Lem}\label{measure distance}
For any $\varepsilon > 0,\delta >0$ and two sequences $\{x_i\}_{i=0}^{n-1},\{y_i\}_{i=0}^{n-1}$ of $X$ such that $d(x_i,y_i)<\varepsilon$ holds for any $i\in [0,n-1]$, then for any $J\subseteq \{0,1,\cdots,n-1\}$, $\frac{n-|J|}{n}<\delta$, one has:
\item[(a)] $d(\frac{1}{n}\sum_{i=0}^{n-1}\delta_{x_i},\frac{1}{n}\sum_{i=0}^{n-1}\delta_{y_i})<\varepsilon.$
\item[(b)] $d(\frac{1}{n}\sum_{i=0}^{n-1}\delta_{x_i},\frac{1}{|J|}\sum_{i\in J}\delta_{y_i})<\varepsilon+2\delta.$
\end{Lem}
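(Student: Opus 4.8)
The plan is to reduce everything to the defining series of the metric $d$ and to exploit that $0\le f_k\le 1$ together with the identity $d(x,y)=d(\delta_x,\delta_y)=\sum_{k\ge1}2^{-k}|\langle f_k,\delta_x-\delta_y\rangle|$. No dynamics enter; both parts are elementary estimates on convex combinations of Dirac measures.

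For part (a), write $\mu=\frac1n\sum_{i=0}^{n-1}\delta_{x_i}$ and $\nu=\frac1n\sum_{i=0}^{n-1}\delta_{y_i}$. By linearity of the pairing, $\langle f_k,\mu-\nu\rangle=\frac1n\sum_{i=0}^{n-1}(f_k(x_i)-f_k(y_i))$, so $|\langle f_k,\mu-\nu\rangle|\le\frac1n\sum_{i=0}^{n-1}|f_k(x_i)-f_k(y_i)|$. Substituting into the series defining $d(\mu,\nu)$ and interchanging the double sum over $k$ and $i$ (legitimate since all terms are nonnegative and $\sum_k2^{-k}$ converges) gives
\[
d(\mu,\nu)\le\frac1n\sum_{i=0}^{n-1}\sum_{k\ge1}2^{-k}|f_k(x_i)-f_k(y_i)|=\frac1n\sum_{i=0}^{n-1}d(\delta_{x_i},\delta_{y_i})=\frac1n\sum_{i=0}^{n-1}d(x_i,y_i)<\varepsilon,
\]
which is exactly (a).

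For part (b), I would insert the same intermediate measure $\nu=\frac1n\sum_{i=0}^{n-1}\delta_{y_i}$ and apply the triangle inequality $d(\mu,\lambda)\le d(\mu,\nu)+d(\nu,\lambda)$, where $\lambda=\frac1{|J|}\sum_{i\in J}\delta_{y_i}$. The first term is $<\varepsilon$ by part (a). For the second, I split the full average into its $J$-part and its complementary part:
\[
\langle f_k,\nu-\lambda\rangle=\Big(\frac1n-\frac1{|J|}\Big)\sum_{i\in J}f_k(y_i)+\frac1n\sum_{i\notin J}f_k(y_i).
\]
Since $0\le f_k\le 1$, the first term has absolute value at most $\frac{n-|J|}{n|J|}\cdot|J|=\frac{n-|J|}{n}$ and the second at most $\frac{n-|J|}{n}$; using the hypothesis $\frac{n-|J|}{n}<\delta$ yields $|\langle f_k,\nu-\lambda\rangle|<2\delta$ uniformly in $k$, so $d(\nu,\lambda)=\sum_{k\ge1}2^{-k}|\langle f_k,\nu-\lambda\rangle|<2\delta$. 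Combining the two estimates gives $d(\mu,\lambda)<\varepsilon+2\delta$.

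The argument has no genuine obstacle; the only points requiring care are the interchange of summation in (a), which is justified by nonnegativity and absolute convergence, and the bookkeeping in (b) of the two different normalizations $\frac1n$ and $\frac1{|J|}$. I note that the crude term-by-term bound is what produces the stated factor $2\delta$; one could actually obtain the sharper $\delta$ by exploiting that the two contributions in the split have opposite signs, but the looser bound suffices and matches the statement.
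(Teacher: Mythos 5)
Your proof is correct, and since the paper offers no argument at all for this lemma (it merely asserts that Lemma \ref{measure distance} ``is easy to be verified''), your write-up is precisely the intended routine verification: reduce to the defining series $d(\mu,\nu)=\sum_{k\geq 1}2^{-k}|\langle f_k,\mu-\nu\rangle|$, use $0\leq f_k\leq 1$ and linearity, and in (b) interpolate through $\frac1n\sum_{i=0}^{n-1}\delta_{y_i}$. Your closing observation is also right: since $\left(\frac1n-\frac1{|J|}\right)\sum_{i\in J}f_k(y_i)\leq 0\leq \frac1n\sum_{i\notin J}f_k(y_i)$, the two contributions partially cancel and one could replace $2\delta$ by $\delta$, but the stated bound is all that is used downstream.
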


Lemma \ref{measure distance} is easy to be verified and shows us that if any two orbit of $x$ and $y$ in finite steps are close in the most of time, then the two empirical measures induced by $x,y$ are also close.

\begin{Lem}\label{lemma-KK}
Suppose that $(X,f)$ has specification. Let $K$ be a connected non-empty compact subset of $\mathcal M_f(X)$ and $\mu\in K$. Then for any $\varepsilon>0$ there exists a $N_\varepsilon^\mu\in \mathbb{N}$ such that for any $\alpha\in K$, any $N> N_\varepsilon^\mu$ and any $M>N$, there is an $x\in X$ and $N^*>M$ such that
\begin{description}
\item[(a)] $\mathcal E_{n}(x)\in B(\mu,\varepsilon),\ \forall n\in[N_\varepsilon^\mu,N]$;

\item[(b)] $\mathcal E_{n}(x)\in B(K,\varepsilon),\ \forall n\in[N,N^*]$;

\item[(c)] $\mathcal E_{N^*}(x)\in B(\alpha,\varepsilon)$.
\end{description}
\end{Lem}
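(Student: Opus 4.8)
The plan is to realize $x$ as a single orbit obtained by gluing together, via the specification property, a finite list of long orbit segments, each of which is generic for a measure lying on a fine chain inside $K$ that runs from $\mu$ to $\alpha$. The weak$^*$ metric identification $d(x,y)=d(\delta_x,\delta_y)$ together with Lemma \ref{measure distance} is what converts ``$x$ traces a segment'' into ``the empirical measure of $x$ is close to the empirical measure of that segment'', so throughout I would fix a tracing accuracy $\varepsilon'\ll\varepsilon$, let $K_{\varepsilon'}$ be the gap constant from Definition \ref{definition of specification}, and only at the end check that the accumulated tracing error plus the contribution of the (boundedly many, short) gaps stays below $\varepsilon$.

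First I would fix, for the given $\mu$, a generic point $z$ for $\mu$ (such points exist since the system has specification, by \cite{SigSpe}, as $G_\mu\neq\emptyset$), and let $N_\varepsilon^\mu$ be a burn-in time with $\mathcal E_n(z)\in B(\mu,\varepsilon/4)$ for all $n\ge N_\varepsilon^\mu$; I would also enlarge $N_\varepsilon^\mu$ past $K_{\varepsilon'}$ and the burn-in constants appearing below, so that it still depends only on $\mu$ and $\varepsilon$. Given $\alpha\in K$, $N>N_\varepsilon^\mu$ and $M>N$, I would use that a compact connected metric space is well chained to pick a finite chain $\mu=\beta_0,\beta_1,\dots,\beta_r=\alpha$ in $K$ with $d(\beta_j,\beta_{j+1})<\varepsilon/4$, and for each $\beta_j$ a generic point $w_j$ with its own burn-in $n_j$. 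The first segment is taken to be the length-$N$ piece of the orbit of $z$; since it carries no preceding gaps, Lemma \ref{measure distance}(a) gives $\mathcal E_n(x)\in B(\mu,\varepsilon/4+\varepsilon')\subseteq B(\mu,\varepsilon)$ for all $n\in[N_\varepsilon^\mu,N]$, which is exactly (a).

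The remaining segments use the $w_j$ with lengths $\ell_1<\ell_2<\dots<\ell_r$ chosen to grow so fast that (i) each $\ell_j$ dominates the total length accumulated before it, forcing the running measure at the end of segment $j$ into a small ball about $\beta_j$, and (ii) each $\ell_{j-1}$ dominates the next burn-in $n_j$, so the transient at the start of segment $j$ is negligible. With this schedule the running measure $\mathcal E_n(x)$ on $[N,N^*]$ is always, up to $\varepsilon'$ and gap errors, a convex combination of a point near $\beta_{j-1}$ and the partial average $\mathcal E_B(w_j)$ of the current segment: in the initial transient of segment $j$ the second term has tiny weight, so $\mathcal E_n(x)$ stays near $\beta_{j-1}\in K$, while past the burn-in $\mathcal E_B(w_j)\in B(\beta_j,\varepsilon/4)$ and the combination lies within $\varepsilon/4+o(1)$ of $\beta_{j-1}\in K$ because $d(\beta_{j-1},\beta_j)<\varepsilon/4$; either way $\mathcal E_n(x)\in B(K,\varepsilon)$, giving (b). Finally I would take $\ell_r$ large enough that its endpoint $N^*$ exceeds $M$ and the last segment dominates everything before it, so that $\mathcal E_{N^*}(x)\in B(\alpha,\varepsilon)$, which is (c); the point $x$ itself is produced by one application of specification to the whole list of segments.

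I expect the main obstacle to be precisely the bookkeeping in the previous paragraph. Because $K$ is only assumed connected and not convex, a running empirical measure that is a genuine convex combination of two points of $K$ need not be anywhere near $K$, so controlling (b) forces both the fineness $d(\beta_j,\beta_{j+1})<\varepsilon/4$ of the chain and the rapid growth of the $\ell_j$, and one must verify that the only thing the running measure ever samples is the short chord between consecutive chain measures. Making the two domination requirements compatible with the first length $N$ (which is handed to us and may be large) and with the gap constant $K_{\varepsilon'}$ is the one place where the constants must be chosen in the right order.
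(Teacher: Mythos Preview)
Your approach is essentially identical to the paper's: the authors also fix a point approximately generic for $\mu$ to determine $N_\varepsilon^\mu$, pick an $\varepsilon$-fine chain $\mu=\beta_1,\dots,\beta_{m_\varepsilon}=\alpha$ through $K$, glue long orbit segments of approximately generic points for the $\beta_i$ via specification with rapidly growing lengths governed by exactly your domination conditions (i) and (ii), and verify (a)--(c) using Lemma~\ref{measure distance} with the same case split (transient vs.\ past burn-in within each segment) that you outline. The only cosmetic differences are that they obtain the approximately generic points via Proposition~\ref{specification-entropy-dense} (ergodic approximation) rather than invoking $G_\mu\neq\emptyset$ directly, and they track explicit constants $\varepsilon/6$, $\varepsilon/12$ where you write $\varepsilon'$ and $o(1)$.
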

\begin{proof}
For any fixed $\varepsilon>0$, by Proposition \ref{specification-entropy-dense}, there exists $p^{\mu}\in X$ and $n^{\mu}\in \mathbb{N}$ such that $\mathcal E_n(p^{\mu})\in B(\mu, \varepsilon/6)$ holds for any $n\geq n^\mu$.  Set $N_\varepsilon^\mu:=n^\mu$, we will prove that such $N_\varepsilon^\mu$ makes this lemma true. Note that $K$ is connected, so for any $\alpha\in K$, we can find a sequence $\{\beta_1,\beta_2,\cdots,\beta_{m_\varepsilon}\}\subseteq K$ such that $d(\beta_{i+1},\beta_i)<\varepsilon,\ \forall i\in\{1,2,\cdots,m_\varepsilon-1\}$ and $\beta_1=\mu,\beta_{m_\varepsilon}=\alpha$. By Proposition \ref{specification-entropy-dense}, for any $i\in\{2,\cdots,m_\varepsilon\}$, there exists $p^{\beta_i}\in X$ and $n^{\beta_i}\in \mathbb{N}$ such that $\mathcal E_n(p^{\beta_i})\in B(\beta_i, \varepsilon/6)$ holds for any $n\geq n^{\beta_i}$. For any $N> N_\varepsilon^\mu$ and $M>N$, we choose $\{T_i\}_{i=1}^{2m_\varepsilon}$ with $T_i\in\mathbb{N}$ such that for $i\in\{1,\cdots,m_\varepsilon-1\}$
\begin{equation}\label{guji-1}
T_1=0,\ T_2=N.
\end{equation}
\begin{equation}\label{guji-2}
T_{2i+1}=T_{2i}+K_{\varepsilon/6}, \mathrm{where}\ K_{\varepsilon/6}\ \mathrm{defined\ in\ the}\ Definiton\ \ref{definition of specification}.
\end{equation}
\begin{equation}\label{guji-3}
\frac{\varepsilon}{12}(T_{2i}-T_{2i-1})>n^{\beta_{i+1}}.
\end{equation}
\begin{equation}\label{guji-4}
\frac{K_{\varepsilon/6}+T_{2i-1}}{T_{2i}-T_{2i-1}}<\frac{\varepsilon}{12}.
\end{equation}
So far, we have fixed $\{T_i\}_{i=1}^{2m_\varepsilon-1}$. We choose $T_{2m_\varepsilon}$ large enough such that
\begin{equation}\label{guji-5}
T_{2m_\varepsilon}\geq\mathrm{max}\{M,T_{2m_\varepsilon-1}+n^{\beta_{m_\varepsilon}}\}.
\end{equation}
\begin{equation}\label{guji-6}
\frac{T_{2m_\varepsilon-1}}{T_{2m_\varepsilon}}<\frac{\varepsilon}{12}.
\end{equation}
By (\ref{guji-2}), we can use specification property. So there is an $x\in X$ that $x$  $\varepsilon/6$-$traces$ $x^*$ on $[T_1,T_2]$ and $\varepsilon/6$-$traces$ $p^{\beta_i}$ on $[T_{2i-1},T_{2i}]$, $\forall i\in\{2,\cdots,m_\varepsilon\}$. Now we claim that such $x$ and $N^*=T_{2m_\varepsilon}$ satisfy the items $\mathbf{(a)(b)(c)}$. $\mathbf{(a)(c)}$ is easy to check by (\ref{guji-1})(\ref{guji-5})(\ref{guji-6}) and Lemma \ref{measure distance}. Here we check the $\mathbf{(b)}$. If $n\in (T_{2i},T_{2i+1})$ for some $i\in\{1,\cdots,m_\varepsilon-1\}$, we have
$$\frac{n-T_{2i}+T_{2i-1}}{T_{2i}-T_{2i-1}}<\frac{\varepsilon}{12},$$
by (\ref{guji-2})(\ref{guji-4}). So, by Lemma \ref{measure distance}, we have
\begin{align}\label{distance-guji-1}
d(\mathcal E_n(x),\beta_i) <& d(\mathcal E_n(x),\mathcal E_{T_{2i}-T_{2i-1}}(p^{\beta_i}))+d(\mathcal E_{T_{2i}-T_{2i-1}}(p^{\beta_i}),\beta_i)\notag\\
<& \frac{\varepsilon}{6}+2\cdot\frac{\varepsilon}{12}+\frac{\varepsilon}{6}\\
=& \frac{\varepsilon}{2}\notag.
\end{align}
If $n\in[T_{2i-1},T_{2i}]$ for some $i\in\{2,3,\cdots,m_\varepsilon\}$, we split this situation into the following two cases.

  {\bf Case 1:}  $\frac{n-T_{2i-1}}{T_{2i-2}-T_{2i-3}}<\frac{\varepsilon}{12}$. Then
  \begin{equation}\label{distance-guji-2}
  d(\mathcal E_n(x),\beta_{i-1})<\frac{\varepsilon}{6}+2\cdot(\frac{\varepsilon}{12}+\frac{\varepsilon}{12})+\frac{\varepsilon}{6}=\frac{2\varepsilon}{3},
  \end{equation}
  by Lemma \ref{measure distance} and (\ref{guji-4}).

  {\bf Case 2:}  $\frac{n-T_{2i-1}}{T_{2i-2}-T_{2i-3}}\geq\frac{\varepsilon}{12}$. If so, we have $n-T_{2i-1}>n^{\beta_i}$ by (\ref{guji-3}), which implies $\mathcal E_{n-T_{2i-1}}(p^{\beta_i})\in B(\beta_i,\varepsilon/6)$. We consider $d(\mathcal E_n(x),\beta_i)$ and $d(\mathcal E_n(x),\beta_{i-1})$.
\begin{align*}
d(\mathcal E_n(x),\beta_i) &= d(\frac{T_{2i-1}}{n}\mathcal E_{T_{2i-1}}(x)+\frac{n-T_{2i-1}}{n}\mathcal E_{n-T_{2i-1}}(f^{T_{2i-1}}x),\beta_i)\\
&\leq \frac{T_{2i-1}}{n}d(\mathcal E_{T_{2i-1}}(x),\beta_i)+\frac{n-T_{2i-1}}{n}d(\mathcal E_{n-T_{2i-1}}(f^{T_{2i-1}}x),\beta_i)\\
&\leq \frac{T_{2i-1}}{n}d(\mathcal E_{T_{2i-1}}(x),\beta_{i-1})+\frac{T_{2i-1}}{n}d(\beta_{i-1},\beta_{i})+\frac{n-T_{2i-1}}{n}d(\mathcal E_{n-T_{2i-1}}(f^{T_{2i-1}}x),\beta_i)\\
&< \frac{T_{2i-1}}{n}(\frac{\varepsilon}{6}+2\cdot\frac{\varepsilon}{12}+\frac{\varepsilon}{6})+\frac{T_{2i-1}}{n}\varepsilon+\frac{n-T_{2i-1}}{n}(\frac{\varepsilon}{6}+\frac{\varepsilon}{6})\\
&< \frac{\varepsilon}{2}+\frac{T_{2i-1}}{n}\varepsilon,
\end{align*}
 \begin{align*}
d(\mathcal E_n(x),\beta_{i-1}) &= d(\frac{T_{2i-1}}{n}\mathcal E_{T_{2i-1}}(x)+\frac{n-T_{2i-1}}{n}\mathcal E_{n-T_{2i-1}}(f^{T_{2i-1}}x),\beta_{i-1})\\
&\leq \frac{T_{2i-1}}{n}d(\mathcal E_{T_{2i-1}}(x),\beta_{i-1})+\frac{n-T_{2i-1}}{n}d(\mathcal E_{n-T_{2i-1}}(f^{T_{2i-1}}x),\beta_{i-1})\\
&\leq \frac{T_{2i-1}}{n}d(\mathcal E_{T_{2i-1}}(x),\beta_{i-1})+\frac{n-T_{2i-1}}{n}d(\mathcal E_{n-T_{2i-1}}(f^{T_{2i-1}}x),\beta_i)+\frac{n-T_{2i-1}}{n}d(\beta_{i},\beta_{i-1})\\
&< \frac{T_{2i-1}}{n}(\frac{\varepsilon}{6}+2\cdot\frac{\varepsilon}{12}+\frac{\varepsilon}{6})+\frac{n-T_{2i-1}}{n}(\frac{\varepsilon}{6}+\frac{\varepsilon}{6})+\frac{n-T_{2i-1}}{n}\varepsilon\\
&< \frac{\varepsilon}{2}+\frac{n-T_{2i-1}}{n}\varepsilon.
\end{align*}
So,
\begin{equation}\label{distance-guji-3}
\mathrm{min}\{d(\mathcal E_n(x),\beta_i),d(\mathcal E_n(x),\beta_{i-1})\}<\varepsilon.
\end{equation}
With the combination of (\ref{distance-guji-1}) (\ref{distance-guji-2}) (\ref{distance-guji-3}), one has $\mathbf{(b)}$. 
\end{proof}

\begin{Lem}\label{lemma-KKKK}
Suppose that $(X,f)$ has specification. Let $K$ be a connected non-empty compact subset of $\mathcal M_f(X)$ and $\mu\in K$. Then for any $\varepsilon>0$, there exists a $M_\varepsilon^\mu\in \mathbb{N}$ such that for any $\alpha\in K$ and any $M> M_\varepsilon^\mu$, there exist $t_2>t_1>M$ and $x\in X$ such that
\begin{description}
\item[(a)] $\mathcal E_{n}(x)\in B(\mu,\varepsilon),\ \forall n\in[M_\varepsilon^\mu,M]$;

\item[(b)] $\mathcal E_{n}(x)\in B(K,\varepsilon),\ \forall n\in[M,t_1]$;

\item[(c)] $\mathcal E_{t_1}(x)\in B(\alpha,\varepsilon)$;

\item[(d)] $\mathcal E_{n}(x)\in B(K,\varepsilon),\ \forall n\in[t_1,t_2]$;

\item[(e)] $\mathcal E_{t_2}(x)\in B(\mu,\varepsilon)$.
\end{description}
\end{Lem}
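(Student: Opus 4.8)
The plan is to reduce Lemma \ref{lemma-KKKK} to the construction already carried out for Lemma \ref{lemma-KK}, by running the same specification-gluing argument along a chain of measures that first leaves $\mu$, reaches $\alpha$, and then returns to $\mu$. Concretely, fix $\varepsilon>0$ and, exactly as in the proof of Lemma \ref{lemma-KK}, use Proposition \ref{specification-entropy-dense} to produce a generic point $p^\mu$ and an integer $n^\mu$ with $\mathcal E_n(p^\mu)\in B(\mu,\varepsilon/6)$ for all $n\ge n^\mu$. I set $M_\varepsilon^\mu:=n^\mu$. The crucial point is that this depends only on $\mu$ and $\varepsilon$ and \emph{not} on $\alpha$, which supplies the uniformity demanded by item (a); the $\alpha$-dependence will be pushed entirely into the free times $t_1,t_2$, which are allowed to depend on $\alpha$ and $M$.

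Given $\alpha\in K$, connectedness of $K$ furnishes a chain $\mu=\beta_1,\beta_2,\ldots,\beta_{m_\varepsilon}=\alpha$ with $d(\beta_{i+1},\beta_i)<\varepsilon$, as before. The new ingredient is to reflect this chain about $\alpha$, producing the out-and-back list
$$\beta_1,\beta_2,\ldots,\beta_{m_\varepsilon-1},\beta_{m_\varepsilon},\beta_{m_\varepsilon-1},\ldots,\beta_2,\beta_1,$$
which still has consecutive elements within $\varepsilon$, still lies entirely in $K$, and returns to $\mu$. For each entry I reuse the generic points $p^{\beta_i}$ already obtained, and then invoke the specification property (Definition \ref{definition of specification}) to get a single $x$ that $\varepsilon/6$-traces $p^{\beta_1}$ on the initial block $[0,M]$ and $\varepsilon/6$-traces the successive $p^{\beta_i}$ on later blocks, separated by gaps of length $K_{\varepsilon/6}$.

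The heart of the matter is the choice of the block endpoints $T_i$. For the forward half I copy the recipe \eqref{guji-1}--\eqref{guji-6} verbatim, with $N$ replaced by $M$ and $N^*$ replaced by the end $t_1$ of the block tracing $p^\alpha$. This delivers item (a) (empirical measures near $\mu$ on $[M_\varepsilon^\mu,M]$), item (b) (empirical measures within $\varepsilon$ of some chain element, hence inside $B(K,\varepsilon)$, on $[M,t_1]$, via the Case~1/Case~2 dichotomy of Lemma \ref{lemma-KK}), and item (c) (the $p^\alpha$-block is taken so long that it dominates the prefix, forcing $\mathcal E_{t_1}(x)\in B(\alpha,\varepsilon)$). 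For the backward half I apply the identical recipe to the reflected blocks: the analogues of \eqref{guji-3} and \eqref{guji-4} give item (d) on $[t_1,t_2]$ by the same dichotomy, and a second application of the length condition \eqref{guji-6} --- now forcing the terminal $p^\mu$-block to dominate the whole prefix --- yields item (e), $\mathcal E_{t_2}(x)\in B(\mu,\varepsilon)$. Throughout, Lemma \ref{measure distance} is what converts orbit-closeness into closeness of empirical measures.

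I expect the only real obstacle to be bookkeeping: arranging the block lengths so that item (c) (domination by the middle $\alpha$-block at $t_1$) and item (e) (domination by the terminal $\mu$-block at $t_2$) hold \emph{simultaneously}, while items (b) and (d) hold at every intermediate time. These are compatible precisely because the blocks are fixed from left to right and each may be taken arbitrarily long relative to the accumulated prefix: I first make the $\alpha$-block dwarf everything up to $t_1$, and then make the final $\mu$-block dwarf everything up to $t_2$ (including the $\alpha$-block), with $t_2>t_1>M$ automatic. The one extra line of care is that $\mathcal E_n(x)$ must remain in $B(K,\varepsilon)$ across the turnaround at $\alpha=\beta_{m_\varepsilon}$; but since $\beta_{m_\varepsilon-1}$ reappears immediately after $\alpha$ and is $\varepsilon$-close to it, the Case~1/Case~2 estimate of Lemma \ref{lemma-KK} applies at the turnaround without modification.
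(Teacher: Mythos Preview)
Your argument is correct. The reflected chain $\mu=\beta_1,\ldots,\beta_{m_\varepsilon}=\alpha,\beta_{m_\varepsilon-1},\ldots,\beta_1=\mu$ is a perfectly good $\varepsilon$-chain in $K$, and once you have it the Case~1/Case~2 dichotomy from Lemma~\ref{lemma-KK} transfers verbatim to every block, including the turnaround; your observation that $M_\varepsilon^\mu=n^\mu$ depends only on $\mu$ and $\varepsilon$ is exactly what makes the uniform version of item~(a) go through.

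The paper's proof takes a different, more modular route: rather than unrolling the construction, it applies Lemma~\ref{lemma-KK} itself \emph{twice} at precision $\varepsilon/3$ --- once with base point $\mu$ and target $\alpha$ to get a point $x_1$ and time $N^*$, and once with base point $\alpha$ and target $\mu$ to get a point $x_2$ and time $N^{**}$ --- and then uses a single specification call to glue $x_1$ on $[0,N^*]$ to $x_2$ on $[N^*+K_{\varepsilon/3},\,N^*+K_{\varepsilon/3}+N^{**}]$. The case analysis in the paper then has four regimes (before $N^*$, the short gap, up to $N_2$, and from $N_2$ to $N^{**}$) rather than a repetition of the full chain analysis. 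What the paper's approach buys is economy of re-verification: Lemma~\ref{lemma-KK} is treated as a black box, so the delicate Case~1/Case~2 estimates need not be redone. What your approach buys is a single specification call and a single point constructed in one pass, at the cost of repeating the block-by-block estimate along the reflected chain. Either is fine; they are two packagings of the same underlying idea.
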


\begin{proof}
By Lemma \ref{lemma-KK}, for $\varepsilon/3$, we obtain $N_{\varepsilon/3}^\mu$ and $N_{\varepsilon/3}^{\alpha}$ such that for any $N_1>N_{\varepsilon/3}^\mu$, there is an $x_1$ and $N^*$ such that
\begin{equation}\label{KKKK-1}
N^*>\mathrm{max}\{N_1,\frac{K_{\varepsilon/3}+N_{\varepsilon/3}^{\alpha}}{\varepsilon/6}\},
\end{equation}
\begin{align*}
& \mathcal E_{n}(x_1)\in B(\mu,\varepsilon/3),\ \forall n\in[N_{\varepsilon/3}^\mu,N_1];\\
& \mathcal E_{n}(x_1)\in B(K,\varepsilon/3),\ \forall n\in[N_1,N^*];\\
& \mathcal E_{N^*}(x_1)\in B(\alpha,\varepsilon/3),
\end{align*}
and for
\begin{equation}\label{KKKK-2}
N_2>\mathrm{max}\{N_{\varepsilon/3}^{\alpha},\frac{N^*+K_{\varepsilon/3}}{\varepsilon/6}\},
\end{equation}
there exist $N^{**}>N_2$ and $x_2$ such that
\begin{align}\label{KKKK-3}
& \mathcal E_{n}(x_2)\in B(\alpha,\varepsilon/3),\ \forall n\in[N_{\varepsilon/3}^\alpha,N_2];\\
& \mathcal E_{n}(x_2)\in B(K,\varepsilon/3),\ \forall n\in[N_2,N^{**}];\notag\\
& \mathcal E_{N^{**}}(x_2)\in B(\mu,\varepsilon/3)\notag.
\end{align}
By specification property, we can obtain an $x\in X$ such that $x$ $\varepsilon/3$-$traces$ $x_1$ on $[0,N^*]$ and $\varepsilon/3$-$traces$ $x_2$ on $[N^*+K_{\varepsilon/3},N^*+K_{\varepsilon/3}+N^{**}]$. Now we consider $\mathcal E_{n}(x),\ n\in[N_{\varepsilon/3}^\mu,N^*+K_{\varepsilon/3}+N^{**}]$ and split into the following cases

  {\bf Case 1:}  When $n\in[N_{\varepsilon/3}^\mu,N^*]$, we have $d(\mathcal E_{n}(x),\mathcal E_{n}(x_1))<\varepsilon/3$. So
\begin{align*}
& \mathcal E_{n}(x)\in B(\mu,\varepsilon),\ \forall n\in[N_{\varepsilon/3}^\mu,N_1];\\
& \mathcal E_{n}(x)\in B(K,\varepsilon),\ \forall n\in[N_1,N^*];\\
& \mathcal E_{N^*}(x)\in B(\alpha,\varepsilon).
\end{align*}

  {\bf Case 2:}  When $n\in[N^*,N^*+K_{\varepsilon/3}+N_{\varepsilon/3}^\alpha]$, we have $d(\mathcal E_{n}(x),\mathcal E_{N^*}(x_1))<2\varepsilon/3$ by (\ref{KKKK-1}) and Lemma \ref{measure distance}. So $d(\mathcal E_{n}(x),\alpha)<\varepsilon$.

  {\bf Case 3:}  When $n\in[N^*+K_{\varepsilon/3}+N_{\varepsilon/3}^\alpha,N_2]$,
\begin{align*}
d(\mathcal E_{n}(x),\alpha) &= d(\frac{N^*+K_{\varepsilon/3}}{n}\mathcal E_{N^*+K_{\varepsilon/3}}(x)+\frac{n-N^*-K_{\varepsilon/3}}{n}\mathcal E_{n-N^*-K_{\varepsilon/3}}(f^{N^*+K_{\varepsilon/3}}x),\alpha)\\
&\leq \frac{N^*+K_{\varepsilon/3}}{n}d(\mathcal E_{N^*+K_{\varepsilon/3}}(x),\alpha)+\frac{n-N^*-K_{\varepsilon/3}}{n}d(\mathcal E_{n-N^*-K_{\varepsilon/3}}(f^{N^*+K_{\varepsilon/3}}x),\alpha).
\end{align*}
Note that $n-N^*-K_{\varepsilon/3}\geq N_{\varepsilon/3}^\alpha$ and $n\leq N_2$, then we have $d(\mathcal E_{n-N^*-K_{\varepsilon/3}}(f^{N^*+K_{\varepsilon/3}}x),\alpha)<\varepsilon$ by (\ref{KKKK-3}). So
$$d(\mathcal E_{n}(x),\alpha)<\frac{N^*+K_{\varepsilon/3}}{n}\varepsilon+\frac{n-N^*-K_{\varepsilon/3}}{n}\varepsilon=\varepsilon.$$

  {\bf Case 4:}  When $n\in[N_2,N^{**}]$. Note that $N^{**}>N_2>\frac{N^*+K_{\varepsilon/3}}{\varepsilon/6}$, so by Lemma \ref{measure distance}, we have
  $$d(\mathcal E_{n}(x),\mathcal E_{n-N^*-K_{\varepsilon/3}}(x_2))<2\varepsilon/3.$$
  Thus
  \begin{align*}
& \mathcal E_{n}(x)\in B(K,\varepsilon),\ \forall n\in[N_2,N^{**}];\\
& \mathcal E_{N^{**}}(x_2)\in B(\mu,\varepsilon).
\end{align*}
Set $M_\varepsilon^\mu=N_{\varepsilon/3}^\mu$, $M=N_1$ $t_1=N^*$ $t_2=N^{**}$, we finish the proof.
\end{proof}

For a dynamical system $(X,f)$, we say a pair $p,q\in X$ is distal if $\liminf_{i\to\infty}d(f^ip,f^iq)>0$. Obviously, $\inf\{d(f^ip,f^iq)|\ i\in\mathbb{N}\}>0$ if $p,q$ is distal. We say a subset $M\subseteq X$ has distal pair if there are distinct $p,q\in M$ such that $p,q$ is distal.

\begin{Lem}\label{lemma-mu}
Suppose that $(X,f)$ has specification. Suppose there are $\mu_1,\mu_2\in\mathcal M_f(X)$ such that $G_{\mu_1}$, $G_{\mu_2}$ have distal pair $(p_1,q_1)$, $(p_2,q_2)$ respectively. Let $\zeta=\mathrm{min}\{\inf\{d(f^ip_1,\\
f^iq_1)|\ i\in\mathbb{N}\},\inf\{d(f^ip_2,f^iq_2)|\ i\in\mathbb{N}\}\}$, then for any $\delta>0$, any $0<\varepsilon<\zeta$ and any $\theta\in[0,1]$, there exists $x_1,x_2\in X$ and $N\in\mathbb{N}$ such that for any $n>N$,
\begin{description}
\item[(a)] $\mathcal E_{n}(x_1)\in B(\theta\mu_1+(1-\theta)\mu_2,\varepsilon+\delta)\ \mathrm{and}\ \mathcal E_{n}(x_2)\in B(\theta\mu_1+(1-\theta)\mu_2,\varepsilon+\delta);$

\item[(b)] $\frac{|\{0\leq i\leq n-1|d(f^ix_1,f^ix_2)<\zeta-\varepsilon\}|}{n}<\delta$.
\end{description}
\end{Lem}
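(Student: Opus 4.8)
The plan is to build $x_1$ and $x_2$ simultaneously using the specification property, sharing one common block structure, so that on each block the orbit of $x_1$ shadows one member of a distal pair while the orbit of $x_2$ shadows the other; this keeps the two orbits apart, while a suitable choice of block lengths drives both empirical measures to $\nu:=\theta\mu_1+(1-\theta)\mu_2$. Write $\varepsilon_0=\varepsilon/2$ and let $K_{\varepsilon_0}$ be the specification constant from Definition \ref{definition of specification}. Since $p_1,q_1\in G_{\mu_1}$ and $p_2,q_2\in G_{\mu_2}$, all four points are generic for their respective measures, so I first fix $n_1,n_2\in\mathbb N$ with $\mathcal E_m(p_1),\mathcal E_m(q_1)\in B(\mu_1,\delta/4)$ for every $m\ge n_1$ and $\mathcal E_m(p_2),\mathcal E_m(q_2)\in B(\mu_2,\delta/4)$ for every $m\ge n_2$.

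Next I choose an alternating sequence of blocks, the odd ones of ``type $1$'' and the even ones of ``type $2$'', separated by gaps of length $K_{\varepsilon_0}$. The block lengths $\ell_k$ are taken increasing with $\ell_k\ge\max\{n_1,n_2\}$ but growing slowly, in the sense that $\ell_k$ is negligible compared with the accumulated length $L_{k-1}:=\sum_{j<k}\ell_j$ of all previous blocks and gaps; at the same time the proportion of time spent in type-$1$ blocks up to stage $k$ is arranged to tend to $\theta$ (for instance by letting the $k$-th type-$1$ and type-$2$ blocks have lengths in ratio $\theta:(1-\theta)$). Applying the specification property to the list $p_1,p_2,p_1,\dots$ along these blocks yields a point $x_1$ that $\varepsilon_0$-traces $p_1$ (resp.\ $p_2$) on every type-$1$ (resp.\ type-$2$) block; applying it to $q_1,q_2,q_1,\dots$ along the \emph{same} blocks with the \emph{same} offsets yields $x_2$ that $\varepsilon_0$-traces $q_1$ (resp.\ $q_2$) there.

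To verify (a) I would estimate $\mathcal E_n(x_1)$, and identically $\mathcal E_n(x_2)$, for every $n>N$ by Lemma \ref{measure distance}. The tracing contributes an error at most $\varepsilon_0=\varepsilon/2$; on the traced indices the empirical measure of the shadowed orbit pieces lies within $\delta/4$ of the appropriate $\mu_i$ by the choice of $n_1,n_2$; the slow growth $\ell_k=o(L_{k-1})$ ensures that no single block can shift the accumulated proportion of $\mu_1$ away from $\theta$ by more than a small amount, so the target weight stays within a further small error of $\nu$; and the gaps, having density tending to $0$, contribute a negligible term through part (b) of Lemma \ref{measure distance}. Summing these gives $\mathcal E_n(x_i)\in B(\nu,\varepsilon+\delta)$ for all $n>N$ once $N$ is large.

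Finally, (b) follows directly from the distal structure: if an index $i$ lies in a block, say a type-$1$ block $[a,b]$ with offset $a$, then $d(f^ix_1,f^{i-a}p_1)<\varepsilon_0$ and $d(f^ix_2,f^{i-a}q_1)<\varepsilon_0$, so that $d(f^ix_1,f^ix_2)\ge d(f^{i-a}p_1,f^{i-a}q_1)-2\varepsilon_0\ge\zeta-\varepsilon$, and likewise on type-$2$ blocks using $(p_2,q_2)$; hence $\{0\le i\le n-1:\ d(f^ix_1,f^ix_2)<\zeta-\varepsilon\}$ is contained in the union of gaps, whose density is $<\delta$ for all large $n$. The separation estimate (b) is thus routine; the genuine difficulty lies in the \emph{uniform} control demanded by (a), namely keeping both empirical measures inside $B(\nu,\varepsilon+\delta)$ for \emph{every} $n>N$ and not merely along a subsequence. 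This is precisely the phenomenon handled by the multi-case computation in the proof of Lemma \ref{lemma-KK}, and the same bookkeeping — balancing each block length against the accumulated length and absorbing the gap and transient contributions — is what makes the estimate go through here.
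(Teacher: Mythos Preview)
Your core idea coincides with the paper's: build $x_1$ and $x_2$ on a \emph{common} block schedule, with $x_1$ tracing $p_i$ and $x_2$ tracing $q_i$ on each block, so that the distal separation is inherited on every block index and only the gap indices can violate the inequality in (b). Where you diverge is in the block architecture. The paper first reduces to rational $\theta=\tfrac{s}{s+t}$ and then uses a \emph{periodic} pattern of equal-length blocks: each period consists of $s$ blocks tracing $p_1$ (resp.\ $q_1$) followed by $t$ blocks tracing $p_2$ (resp.\ $q_2$), all of the same length $M$ with gaps $K_{\varepsilon/2}$ in between. This periodicity makes the uniform estimate (a) nearly automatic: at the end of every full period the empirical measure is within a fixed error of $\nu$, and after $r$ periods one extra (partial) period can perturb by at most $1/r$, so control for \emph{all} $n>N$ falls out without the multi-case bookkeeping of Lemma~\ref{lemma-KK}. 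Your slowly growing blocks with $\ell_k=o(L_{k-1})$ also work, but they are heavier machinery than the problem requires, and you end up importing exactly the difficulty you flag in your last paragraph.

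Two omissions are worth patching. First, specification only shadows \emph{finitely} many pieces; to obtain $x_1,x_2$ tracing the full infinite schedule you must build approximants $x_1^{(k)},x_2^{(k)}$ for each finite truncation, extract a convergent subsequence by compactness, and use continuity of $f$ to pass the tracing to the limit --- the paper carries this out explicitly. Second, your lower bound $\ell_k\ge\max\{n_1,n_2\}$ is chosen only to make $\mathcal E_{\ell_k}(p_i)$ close to $\mu_i$; it does not by itself force the gap density below $\delta$. You also need the block lengths large relative to $K_{\varepsilon_0}/\delta$ (the paper's condition $M>4K_{\varepsilon/2}/\delta$ serves precisely this purpose), and for irrational $\theta$ you should say how integer block lengths realize the ratio $\theta:(1-\theta)$ --- the paper sidesteps this by the rational reduction and a density argument.
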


\begin{proof}
We just proof this lemma for $\theta$ is rational. Then, the lemma naturally holds for any $\theta\in[0,1]$ by the denseness of rational numbers. For any fixed $\delta>0$, $0<\varepsilon<\zeta$ and $\frac{\theta}{1-\theta}=\frac{s}{t}$, where $s,t\in\mathbb{N}^+$, we can obtain an $M_1$ such that $\mathcal E_{n}(p_i)\in B(\mu_i,\varepsilon/2)$ and $\mathcal E_{n}(q_i)\in B(\mu_i,\varepsilon/2), i=\{1,2\}$ hold for any $n\geq M_1$. We choose $M,r\in\mathbb{N}^+$ such that
\begin{equation}\label{pqlimit-1}
M>\max\{M_1,\frac{4K_{\varepsilon/2}}{\delta}\},
\end{equation}
\begin{equation}\label{pqlimit-2}
r>\frac{4}{\delta}.
\end{equation}
For any $k\geq 1$, by specification property, we can obtain an $x_1^k$ such that for any $j\in[0,k-1],\ i\in[0,s-1]$, $x_1^k$ $\varepsilon/2$-$traces$ $p_1$ on $[j(s+t)(M+K_{\varepsilon/2})+i(M+K_{\varepsilon/2}),j(s+t)(M+K_{\varepsilon/2})+(i+1)M+iK_{\varepsilon/2}]$ and for any $j\in[0,k-1],\ i\in[s,s+t-1]$, $x_1^k$ $\varepsilon/2$-$traces$ $p_2$ on $[j(s+t)(M+K_{\varepsilon/2})+i(M+K_{\varepsilon/2}),j(s+t)(M+K_{\varepsilon/2})+(i+1)M+iK_{\varepsilon/2}]$. Also we can obtain an $x_2^k$ such that for any $j\in[0,k-1],\ i\in[0,s-1]$, $x_2^k$ $\varepsilon/2$-$traces$ $q_1$ on $[j(s+t)(M+K_{\varepsilon/2})+i(M+K_{\varepsilon/2}),j(s+t)(M+K_{\varepsilon/2})+(i+1)M+iK_{\varepsilon/2}]$ and for any $j\in[0,k-1],\ i\in[s,s+t-1]$, $x_2^k$ $\varepsilon/2$-$traces$ $q_2$ on $[j(s+t)(M+K_{\varepsilon/2})+i(M+K_{\varepsilon/2}),j(s+t)(M+K_{\varepsilon/2})+(i+1)M+iK_{\varepsilon/2}]$. We can assume that(take subsequence if necessary) $x_1=\lim_{k\to\infty}x_1^k,\ x_2=\lim_{k\to\infty}x_2^k$. By the continuity of $f$, we have for any $j\in\mathbb{N},\ i\in[0,s-1]$, $x_1$ $\varepsilon/2$-$traces$ $p_1$ on $[j(s+t)(M+K_{\varepsilon/2})+i(M+K_{\varepsilon/2}),j(s+t)(M+K_{\varepsilon/2})+(i+1)M+iK_{\varepsilon/2}]$ and for any $j\in\mathbb{N},\ i\in[s,s+t-1]$, $x_1$ $\varepsilon/2$-$traces$ $p_2$ on $[j(s+t)(M+K_{\varepsilon/2})+i(M+K_{\varepsilon/2}),j(s+t)(M+K_{\varepsilon/2})+(i+1)M+iK_{\varepsilon/2}]$. Similarly, for any $j\in\mathbb{N},\ i\in[0,s-1]$, $x_2$ $\varepsilon/2$-$traces$ $q_1$ on $[j(s+t)(M+K_{\varepsilon/2})+i(M+K_{\varepsilon/2}),j(s+t)(M+K_{\varepsilon/2})+(i+1)M+iK_{\varepsilon/2}]$ and for any $j\in\mathbb{N},\ i\in[s,s+t-1]$, $x_2$ $\varepsilon/2$-$traces$ $q_2$ on $[j(s+t)(M+K_{\varepsilon/2})+i(M+K_{\varepsilon/2}),j(s+t)(M+K_{\varepsilon/2})+(i+1)M+iK_{\varepsilon/2}]$. Set $N:=r(s+t)(M+K_{\varepsilon/2})$, we will show that such $N$ and $x_1,x_2$ satisfy $(\mathbf{a})$ and $(\mathbf{b})$. For any $n>N$, $n$ lies in $[k(s+t)(M+K_{\varepsilon/2}),(k+1)(s+t)(M+K_{\varepsilon/2})]$ for some $k\geq r$. By (\ref{pqlimit-2}) and Lemma \ref{measure distance}, we have
\begin{equation}\label{delta/2}
d(\mathcal E_{n}(x_1),\mathcal E_{k(s+t)(M+K_{\varepsilon/2})}(x_1))<\frac{\delta}{2};\ \ \ d(\mathcal E_{n}(x_2),\mathcal E_{k(s+t)(M+K_{\varepsilon/2})}(x_2))<\frac{\delta}{2}
\end{equation}
Note that for any $j\in\mathbb{N},\ i\in[0,s-1]$, $x_1$ $\varepsilon/2$-$traces$ $p_1$ on $[j(s+t)(M+K_{\varepsilon/2})+i(M+K_{\varepsilon/2}),j(s+t)(M+K_{\varepsilon/2})+(i+1)M+iK_{\varepsilon/2}]$ and for any $j\in\mathbb{N},\ i\in[s,s+t-1]$, $x_1$ $\varepsilon/2$-$traces$ $p_2$ on $[j(s+t)(M+K_{\varepsilon/2})+i(M+K_{\varepsilon/2}),j(s+t)(M+K_{\varepsilon/2})+(i+1)M+iK_{\varepsilon/2}]$. We have
\begin{align*}
& d(\mathcal E_{k(s+t)(M+K_{\varepsilon/2})}(x_1),\theta\mathcal E_{M}(p_1)+(1-\theta)\mathcal E_{M}(p_2))\\
\leq & d(\sum_{i=1}^k\frac{1}{k}\mathcal E_{(s+t)(M+K_{\varepsilon/2})}(f^{(i-1)(s+t)(M+K_{\varepsilon/2})}x_1),\theta\mathcal E_{M}(p_1)+(1-\theta)\mathcal E_{M}(p_2))\\
\leq & \frac{1}{k}\sum_{i=1}^kd(\mathcal E_{(s+t)(M+K_{\varepsilon/2})}(f^{(i-1)(s+t)(M+K_{\varepsilon/2})}x_1),\theta\mathcal E_{M}(p_1)+(1-\theta)\mathcal E_{M}(p_2))\\
\leq & \frac{1}{k}\sum_{i=1}^k[d(\frac{s}{s+t}\mathcal E_{s(M+K_{\varepsilon/2})}(f^{(i-1)(s+t)(M+K_{\varepsilon/2})}x_1),\theta\mathcal E_{M}(p_1))\\
& + d(\frac{t}{s+t}\mathcal E_{t(M+K_{\varepsilon/2})}(f^{[(i-1)(s+t)+s](M+K_{\varepsilon/2})}x_1),(1-\theta)\mathcal E_{M}(p_2))]\\
< & \frac{1}{k}\sum_{i=1}^k[\theta(\varepsilon/2+\delta/2)+(1-\theta)(\varepsilon/2+\delta/2)]\\
= & \varepsilon/2+\delta/2.
\end{align*}
Combining with (\ref{delta/2}) and $\mathcal E_{M}(p_i)\in B(\mu_i,\varepsilon/2)$, we have $d(\mathcal E_{n}(x_1),\theta\mu_1+(1-\theta\mu_2))<\varepsilon+\delta$. Similarly, we can prove $d(\mathcal E_{n}(x_2),\theta\mu_1+(1-\theta\mu_2))<\varepsilon+\delta$. Hence $(\mathbf{a})$ holds. Note that $\zeta=\mathrm{min}\{\inf\{d(f^ip_1,f^iq_1)|\ i\in\mathbb{N}\},\inf\{d(f^ip_2,f^iq_2)|\ i\in\mathbb{N}\}\}$, then we have
$$\frac{|\{i|d(f^ix_1,f^ix_2)<\zeta-\varepsilon\}|}{n}<\frac{1}{k}+\frac{K_{\varepsilon/2}}{M}<\delta.$$
Hence $(\mathbf{b})$ holds.
\end{proof}

\subsection{Proof of Theorem \ref{maintheorem-DC1inSaturated}}  
We assume that $(p_1,q_1)$, $(p_2,q_2)$ is the distal pair of $G_{\mu_1}$, $G_{\mu_2}$ respectively and $\mathrm{min}\{\inf\{d(f^ip_1,f^iq_1)|\ i\in\mathbb{N}\},\inf\{d(f^ip_2,f^iq_2)|\ i\in\mathbb{N}\}\}=\zeta>0$. For any non-empty open set $U$, we can fix an $\varepsilon>0$ and a transitive point $z\in U$ such that $\overline{B(z,\varepsilon)}\subseteq U$ since transitive points are dense for system with specification property. Let $\varepsilon_i=\varepsilon/2^i$, $K_i=K_{\varepsilon_i}$$($cf.definition of specification property$)$. Let $\delta_1<1,\ \delta_i=\delta_{i-1}/2$. By \cite[Page 944]{PS2},  there exists a sequence $\{\alpha_1,\alpha_2,\cdots\}\subseteq K$ such that $$\overline{\{\alpha_j:j\in\mathbb{N}^+,j>n\}}=K,\ \forall n\in\mathbb{N}.$$ By Lemma \ref{lemma-mu}, for any $s\in\mathbb{N}^+$, we can obtain $x_1^{\varepsilon_s,\delta_s}$, $x_2^{\varepsilon_s,\delta_s}$ and $N^{\varepsilon_s,\delta_s}$ such that for any $n\geq N^{\varepsilon_s,\delta_s}$
\begin{equation}\label{result-of-lemma-pq1}
\mathcal E_n(x_1^{\varepsilon_s,\delta_s})\in B(\mu,\varepsilon_s+\delta_s),\ \mathcal E_n(x_2^{\varepsilon_s,\delta_s})\in B(\mu,\varepsilon_s+\delta_s),
\end{equation}
\begin{equation}\label{result-of-lemma-pq2}
\frac{|\{i\in[0,n-1]|d(f^ix_1^{\varepsilon_s,\delta_s},f^ix_2^{\varepsilon_s,\delta_s})<\zeta-\varepsilon\}|}{n}<\delta_s.
\end{equation}
Also, for any $s\in\mathbb{N}^+$, we can obtain an $M_{\varepsilon_s}^\mu$ such that the result of Lemma \ref{lemma-KKKK} holds.
Now, giving an $\xi=(\xi_1,\xi_2,\cdots)\in\{1,2\}^\infty$, we construct the $x_\xi$ inductively.

{\bf Step 1: $construct\ x_{\xi_1}$.} 
We fix $T_1=2K_1$. By Lemma \ref{lemma-KKKK}, for a large enough $M_1>M_{\varepsilon_1}^\mu$ satisfying
\begin{equation}
\delta_1M_1>\mathrm{max}\{T_1+2K_1,\ N^{\varepsilon_1,\delta_1}\}.
\end{equation}
we can obtain an $x_{\varepsilon_1}^{\alpha_1}$ and $t_2^{\varepsilon_1,\alpha_1}>t_1^{\varepsilon_1,\alpha_1}>M_1$ such that
\begin{equation}
\left\{
             \begin{array}{lr}
             \mathcal E_{n}(x_{\varepsilon_1}^{\alpha_1})\in B(\mu,\varepsilon_1),\ \forall n\in[M_{\varepsilon_1}^\mu,M_1];\\
             \mathcal E_{n}(x_{\varepsilon_1}^{\alpha_1})\in B(K,\varepsilon_1),\ \forall n\in[M_1,t_1^{\varepsilon_1,\alpha_1}];\\
             \mathcal E_{t_1^{\varepsilon_1,\alpha_1}}(x_{\varepsilon_1}^{\alpha_1})\in B(\alpha_1,\varepsilon_1);\\
             \mathcal E_{n}(x_{\varepsilon_1}^{\alpha_1})\in B(K,\varepsilon_1),\ \forall n\in[t_1^{\varepsilon_1,\alpha_1},t_2^{\varepsilon_1,\alpha_1}];\\
             \mathcal E_{t_2^{\varepsilon_1,\alpha_1}}(x_{\varepsilon_1}^{\alpha_1})\in B(\mu,\varepsilon_1).
             \end{array}
\right.
\end{equation}
Set $T_{1\rightarrow 2}=T_1+t_1^{\varepsilon_1,\alpha_1}$, $T_2=T_1+t_2^{\varepsilon_1,\alpha_1}$, $T_3=T_2+2K_1$, $T_4$ large enough such that
\begin{equation}
\delta_1T_4>\mathrm{max}\{T_3+2K_2,\ M_{\varepsilon_2}^\mu\},\ T_4-T_3>N^{\varepsilon_1,\delta_1}.
\end{equation}
By specification property, we can obtain an $x_{\xi_1}$ $\varepsilon_1$-$traces$ $z,x_{\varepsilon_1}^{\alpha_1},x_{\xi_1}^{\varepsilon_1,\delta_1}$ on $[0,0],[T_1,T_2],[T_3,T_4]$ respectively.

{\bf Step k: $construct\ x_{\xi_1\cdots\xi_k}.$}  If $x_{\xi_1\cdots\xi_{k-1}}$, $\{T_i\}_{i=1}^{2k(k-1)}$ and $\{T_{4i-3\rightarrow 4i-2}\}_{i=1}^{\frac{k(k-1)}{2}}$ have been defined, we construct $x_{\xi_1\cdots\xi_k}$ in the following way. For any $i\in\{1,2,\cdots,k\}$, let $T_{2k(k-1)+4i-2}$ and $T_{2k(k-1)+4i}$ be indefinite; $T_{2k(k-1)+4i-3}=T_{2k(k-1)+4i-4}+2K_k$ and $T_{2k(k-1)+4i-1}=T_{2k(k-1)+4i-2}+2K_k$. By Lemma \ref{lemma-KKKK}, for a large enough $M_{\frac{k(k-1)}{2}+i}>M_{\varepsilon_k}^\mu$ satisfying
\begin{equation}\label{guji-lemma1-1}
\delta_kM_{\frac{k(k-1)}{2}+i}>\mathrm{max}\{T_{2k(k-1)+4i-3}+2K_k,\ N^{\varepsilon_k,\delta_k}\}.
\end{equation}
we can obtain an $x_{\varepsilon_k}^{\alpha_i}$ and $t_2^{\varepsilon_k,\alpha_i}>t_1^{\varepsilon_k,\alpha_i}>M_{\frac{k(k-1)}{2}+i}$ such that
\begin{equation}\label{lemma-K-5}
\left\{
             \begin{array}{lr}
             \mathcal E_{n}(x_{\varepsilon_k}^{\alpha_i})\in B(\mu,\varepsilon_k),\ \forall n\in[M_{\varepsilon_k}^\mu,M_{\frac{k(k-1)}{2}+i}];\\
             \mathcal E_{n}(x_{\varepsilon_k}^{\alpha_i})\in B(K,\varepsilon_k),\ \forall n\in[M_{\frac{k(k-1)}{2}+i},t_1^{\varepsilon_k,\alpha_i}];\\
             \mathcal E_{t_1^{\varepsilon_k,\alpha_i}}(x_{\varepsilon_k}^{\alpha_i})\in B(\alpha_i,\varepsilon_k);\\
             \mathcal E_{n}(x_{\varepsilon_k}^{\alpha_i})\in B(K,\varepsilon_k),\ \forall n\in[t_1^{\varepsilon_k,\alpha_i},t_2^{\varepsilon_k,\alpha_i}];\\
             \mathcal E_{t_2^{\varepsilon_k,\alpha_i}}(x_{\varepsilon_k}^{\alpha_i})\in B(\mu,\varepsilon_k).
             \end{array}
\right.
\end{equation}
Set $T_{2k(k-1)+4i-3\rightarrow 2k(k-1)+4i-2}=T_{2k(k-1)+4i-3}+t_1^{\varepsilon_k,\alpha_i}$, $T_{2k(k-1)+4i-2}=T_{2k(k-1)+4i-3}+t_2^{\varepsilon_k,\alpha_i}$. If $i<k$, we select $T_{2k(k-1)+4i}$ is large enough such that
\begin{equation}\label{mu-next-step}
\delta_kT_{2k(k-1)+4i}>\mathrm{max}\{T_{2k(k-1)+4i-1}+2K_k,\ M_{\varepsilon_k}^\mu\},
\end{equation}
\begin{equation}
T_{2k(k-1)+4i}-T_{2k(k-1)+4i-1}>N^{\varepsilon_k,\delta_k}.
\end{equation}
If $i=k$, $T_{2k(k-1)+4i}$ is large enough such that
\begin{equation}\label{lemma-4i}
\delta_kT_{2k(k-1)+4i}>\mathrm{max}\{T_{2k(k-1)+4i-1}+2K_{k+1},\ M_{\varepsilon_{k+1}}^\mu\},
\end{equation}
\begin{equation}
T_{2k(k-1)+4i}-T_{2k(k-1)+4i-1}>N^{\varepsilon_k,\delta_k}.
\end{equation}
Hence we have defined the $T_{2(k-1)k+1},\cdots,T_{2k(k+1)}$ and $T_{2k(k-1)+4i-3\rightarrow 2k(k-1)+4i-2}\ \forall i\in[1,k]$. By specification property, we can obtain an $x_{\xi_1\cdots\xi_k}$ $\varepsilon_k$-$traces$ $x_{\xi_1\cdots\xi_{k-1}},f^{k-1}z,x_{\varepsilon_k}^{\alpha_1},\\
x_{\xi_1}^{\varepsilon_k,\delta_k},x_{\varepsilon_k}^{\alpha_2},x_{\xi_2}^{\varepsilon_k,\delta_k},\cdots,x_{\varepsilon_k}^{\alpha_k},x_{\xi_k}^{\varepsilon_k,\delta_k}$ on $[0,T_{2k(k-1)}],[T_{2k(k-1)}+K_k,T_{2k(k-1)}+K_k],[T_{2k(k-1)+1},$\\
$T_{2k(k-1)+2}],\cdots,[T_{2k(k-1)+4k-1},T_{2k(k-1)+4k}]$ respectively. Obviously, $d(x_{\xi_1\cdots\xi_{k-1}},x_{\xi_1\cdots\xi_k})<\varepsilon_k$, so $\{x_{\xi_1\cdots\xi_k}\}_{k=1}^{\infty}$ is a cauchy sequence in $\overline{B(z,\varepsilon)}$ since $\sum_{i=k}^{+\infty}\varepsilon_i\leq 2\varepsilon_k$. Denote the accumulation point of $\{x_{\xi_1\cdots\xi_k}\}_{k=1}^{\infty}$ by $x_\xi$, and it is easy to verify that $x_\xi$ $2\varepsilon_k$-$traces$ $f^{k-1}z,x_{\varepsilon_k}^{\alpha_1},\\
x_{\xi_1}^{\varepsilon_k,\delta_k},x_{\varepsilon_k}^{\alpha_2},x_{\xi_2}^{\varepsilon_k,\delta_k},\cdots,x_{\varepsilon_k}^{\alpha_k},x_{\xi_k}^{\varepsilon_k,\delta_k}$ on $[T_{2k(k-1)}+K_k,T_{2k(k-1)}+K_k],[T_{2k(k-1)+1},T_{2k(k-1)+2}],\cdots,$\\
$[T_{2k(k-1)+4k-1},T_{2k(k-1)+4k}]$ respectively since $\sum_{i=k}^{+\infty}\varepsilon_i\leq 2\varepsilon_k$. Note that $orb(x_\xi,f)$ has a subsequence which shadows the orbit of the transitive point $z$ closer and closer, so we can conclude that $x_\xi$ is also a transitive point. Fix $\xi,\eta\in\{1,2\}^\infty$, we claim that $x_\xi\neq x_\eta$ and $x_\xi,x_\eta$ is a DC1-scrambled pair if $\xi\neq \eta$. Suppose $\xi_s\neq \eta_s(\mathrm{implied\ by\ }\xi\neq \eta)$, then for any $k\geq s$ $x_\xi$ $2\varepsilon_k$-$traces$ $x_{\xi_s}^{\varepsilon_k,\delta_k}$ on [$T_{2(k-1)k+4s-1}$,$T_{2(k-1)k+4s}$] and $x_\eta$ $2\varepsilon_k$-$traces$ $x_{\eta_s}^{\varepsilon_k,\delta_k}$ on [$T_{2(k-1)k+4s-1}$,$T_{2(k-1)k+4s}$]. For any fixed $\kappa<\zeta$, we can get an $I_\kappa>s$ such that $\zeta-\kappa>5\varepsilon_{I_\kappa}$. Note that (\ref{result-of-lemma-pq2}), we have
$$\frac{|\{i\in[T_{2k(k-1)+4s-1},T_{2k(k-1)+4s}]|d(f^ix_{\xi_s}^{\varepsilon_k,\delta_k},f^ix_{\eta_s}^{\varepsilon_k,\delta_k})<\zeta-\varepsilon_k\}|}{T_{2k(k-1)+4s}-T_{2k(k-1)+4s-1}+1}<\delta_k<1$$
holds for any $k\geq I_\kappa$. So we have
$$\frac{|\{i\in[T_{2k(k-1)+4s-1},T_{2k(k-1)+4s}]|d(f^ix_{\xi},f^ix_{\eta})<\zeta-5\varepsilon_k\}|}{T_{2k(k-1)+4s}-T_{2k(k-1)+4s-1}+1}<\delta_k<1$$
holds for any $k\geq I_\kappa$, which implies for any $k\geq I_\kappa$, $\exists t\in [T_{2(k-1)k+4s-1},T_{2(k-1)k+4s}]$ such that $d(f^tx_{\xi},f^tx_{\eta})\geq\zeta-5\varepsilon_k>\kappa$. So $x_\xi\neq x_\eta$ and $\{x_\xi\}_{\xi\in \{1,2\}^\infty}$$($denote by $S)$ is an uncountable set. Meanwhile,
\begin{align*}
& \liminf_{n\to \infty}\frac{1}{n}|\{j\in [0,n-1]:\ d(f^jx_\xi,f^jx_\eta)<\kappa)\}|\\
\le & \liminf_{k\geq I_\kappa,\ k\to \infty}\frac{1}{T_{2(k-1)k+4s}}|\{j\in [0,T_{2(k-1)k+4s}-1]:\ d(f^jx_\xi,f^jx_\eta)<\kappa)\}|\\
\le & \liminf_{k\geq I_\kappa,\ k\to \infty}\frac{T_{2(k-1)k+4s-1}}{T_{2(k-1)k+4s}}+\delta_k\\
\le & \liminf_{k\geq I_\kappa,\ k\to \infty}2\delta_k
= 0.
\end{align*}
On the other hand, For any fixed $t>0$, we can choose $k_t\in\mathbb{N}$ large enough such that $4\varepsilon_k<t$ holds for any $k\geq k_t$. Note that $x_\xi$ and $x_\eta$ are both $2\varepsilon_k$-$tracea$ $x_{\varepsilon_k}^{\alpha_1}$ on [$T_{2(k-1)k+1}$,$T_{2(k-1)k+2}$]. So
\begin{align*}
&\limsup_{n\to \infty}\frac{1}{n}|\{j\in [0,n-1]:\ d(f^ix_\xi,f^ix_\eta)<t)\}|\\
\ge & \limsup_{n\to \infty}\frac{1}{n}|\{j\in [0,n-1]:\ d(f^jx_\xi,f^jx_\eta)<4\varepsilon_{k_t})\}|\\
\ge & \limsup_{k\geq k_t,\ k\to \infty}\frac{1}{T_{2(k-1)k+2}}|\{j\in [0,T_{2(k-1)k+2}-1]:\ d(f^jx_\xi,f^jx_\eta))<4\varepsilon_k\}|\\
\ge & \limsup_{k\geq k_t,\ k\to \infty}(1-\frac{T_{2(k-1)k+1}}{T_{2(k-1)k+2}})\\
\ge & \limsup_{k\geq k_t,\ k\to \infty}(1-\delta_k)\\
= & 1.
\end{align*}
So far, we have proved that $S=\{x_\xi\}_{\xi\in \{1,2\}^\infty}\subseteq \overline{B(z,\varepsilon)}\subseteq U$ is an   uncountable DC1-scrambled set. To finish this proof,  we need to check that $V_f(x_\xi)=K$ for any $\xi\in\{1,2\}^\infty$. On one hand, for any fixed $s\in\mathbb{N}^+$, when $k\geq s$,
%\begin{align*}
%& d(\mathcal{E}_{T_{2(k-1)k+4s-3\rightarrow 2(k-1)k+4s-2}}(x_\xi),\ \alpha_s)\\
%= & d(\frac{T_{2(k-1)k+4s-3}}{T_{2(k-1)k+4s-3\rightarrow 2(k-1)k+4s-2}}\mathcal{E}_{T_{2(k-1)k+4s-3}}(x_\xi)+\frac{T_{2(k-1)k+4s-3\rightarrow 2(k-1)k+4s-2}-T_{2(k-1)k+4s-3}}{T_{2(k-1)k+4s-3\rightarrow 2(k-1)k+4s-2}}\\
%& \mathcal{E}_{T_{2(k-1)k+4s-3\rightarrow 2(k-1)k+4s-2}-T_{2(k-1)k+4s-3}}(f^{T_{2(k-1)k+4s-3}}x_\xi),\ \alpha_s)\\
%\end{align*}
note (\ref{guji-lemma1-1}), $T_{2(k-1)k+4s-3\rightarrow 2(k-1)k+4s-2}-T_{2(k-1)k+4s-3}>M_{\frac{k(k-1)}{2}+s}$, and $x_\xi$ $2\varepsilon_k$-$traces$ $x_{\varepsilon_k}^{\alpha_s}$ on [$T_{2(k-1)k+4s-3}$,$T_{2(k-1)k+4s-3\rightarrow 2(k-1)k+4s-2}$], so we have
\begin{align*}
& d(\mathcal{E}_{T_{2(k-1)k+4s-3\rightarrow 2(k-1)k+4s-2}}(x_\xi),\ \alpha_s)\\
\le & d(\mathcal{E}_{T_{2(k-1)k+4s-3\rightarrow 2(k-1)k+4s-2}-T_{2(k-1)k+4s-3}}(f^{T_{2(k-1)k+4s-3}}x_\xi),\ \alpha_s)+2\delta_k\\
\le & d(\mathcal{E}_{T_{2(k-1)k+4s-2}-T_{2(k-1)k+4s-3}}(x_{\varepsilon_k}^{\alpha_s}),\ \alpha_s)+2\varepsilon_k+2\delta_k\\
\le & \varepsilon_k+2\varepsilon_k+2\delta_k\\
= & 3\varepsilon_k+2\delta_k
\end{align*}
by Lemma \ref{measure distance}. Let $k\to\infty,$ we have $\alpha_s\in V_f(x_\xi)$ for any $s\in\mathbb{N}^+$, which implies $K\subseteq V_f(x_\xi)$.

On the other hand, for any fixed $n\in\mathbb{N}^*$, we consider $\mathcal{E}_{n}(x_\xi)$. Obviously, there is a $k\in\mathbb{N}$ such that $n\in[T_{2(k-1)k+1}, T_{2k(k+1)}+2K_{k+1}]$. If $n$ lies in $[T_{2(k-1)k+4s-3}, T_{2(k-1)k+4s-2}+2K_k]$ for certain $s\in\{2,3,\cdots,k\}$,
\begin{align*}
\mathcal{E}_{n}(x_\xi)=\ \ \ \ \ & \frac{T_{2(k-1)k+4s-3}}{n}\mathcal{E}_{T_{2(k-1)k+4s-3}}(x_\xi)\\
&+\frac{n-T_{2(k-1)k+4s-3}}{n}\mathcal{E}_{n-T_{2(k-1)k+4s-3}}(f^{T_{2(k-1)k+4s-3}}x_\xi).
\end{align*}
Notice that $T_{2(k-1)k+4s-3}=T_{2(k-1)k+4(s-1)}+2K_k$, $x_\xi\ 2\varepsilon_k$-$traces$ $x_{\xi_s}^{\varepsilon_k,\delta_k}$ on $[T_{2(k-1)k+4(s-1)-1},\\
T_{2(k-1)k+4(s-1)}]$ and (\ref{result-of-lemma-pq1}),(\ref{mu-next-step}), so by Lemma \ref{measure distance}, we have
\begin{align*}
d(\mathcal{E}_{T_{2(k-1)k+4s-3}}(x_\xi),\mu) & <d(\mathcal{E}_{T_{2(k-1)k+4(s-1)}-T_{2(k-1)k+4(s-1)-1}}(f^{T_{2(k-1)k+4(s-1)-1}}x_\xi),\ \mu)+2\delta_k\\
& <d(\mathcal{E}_{T_{2(k-1)k+4(s-1)}-T_{2(k-1)k+4(s-1)-1}}(x_{\xi_s}^{\varepsilon_k,\delta_k}),\ \mu)+2\varepsilon_k+2\delta_k\\
& <\varepsilon_k+\delta_k+2\varepsilon_k+2\delta_k,
\end{align*}
i.e.,
\begin{equation}\label{easy-guji}
d(\mathcal{E}_{T_{2(k-1)k+4s-3}}(x_\xi),\mu)<3\varepsilon_k+3\delta_k.
\end{equation}
If $n\in[T_{2(k-1)k+4s-3},T_{2(k-1)k+4s-3}+M_{\varepsilon_k}^\mu]$, note that (\ref{guji-lemma1-1}) and $M_{\frac{2k(k-1)}{2}+s}>M_{\varepsilon_k}^\mu$, then we have $d(\mathcal{E}_{n}(x_\xi),\mathcal{E}_{T_{2(k-1)k+4s-3}}(x_\xi))<2\delta_k$ by Lemma \ref{measure distance}.
So,
\begin{equation}
d(\mathcal{E}_{n}(x_\xi),\mu)<2\delta_k+3\varepsilon_k+3\delta_k=3\varepsilon_k+5\delta_k.
\end{equation}
If $n\in[T_{2(k-1)k+4s-3}+M_{\varepsilon_k}^\mu,T_{2(k-1)k+4s-3}+M_{\frac{2k(k-1)}{2}+s}]$, by (\ref{lemma-K-5}), one has
\begin{align*}
d(\mathcal{E}_{n-T_{2(k-1)k+4s-3}}(f^{T_{2(k-1)k+4s-3}}x_\xi),\mu) & <d(\mathcal{E}_{n-T_{2(k-1)k+4s-3}}(x_{\varepsilon_k}^{\alpha_s}),\mu)+2\varepsilon_k\\
& < \varepsilon_k+2\varepsilon_k\\
& = 3\varepsilon_k.
\end{align*}
Combine with (\ref{easy-guji}), one has
\begin{equation}
d(\mathcal{E}_{n}(x_\xi),\mu)< 3\varepsilon_k+3\delta_k.
\end{equation}
If $n\in[T_{2(k-1)k+4s-3}+M_{\frac{2k(k-1)}{2}+s},T_{2(k-1)k+4s-2}+2K_k]$, by (\ref{guji-lemma1-1}) and Lemma \ref{measure distance}, we have
\begin{equation}
d(\mathcal{E}_{n}(x_\xi), \mathcal{E}_{n-T_{2(k-1)k+4s-3}}(f^{T_{2(k-1)k+4s-3}}x_\xi))<2\delta_k.
\end{equation}
Then $\mathcal{E}_{n}(x_\xi)\in B(K,\varepsilon_k+2\delta_k)$ by (\ref{lemma-K-5}). So $\mathcal{E}_{n}(x_\xi)\subseteq B(K,3\varepsilon_k+5\delta_k)$ when $n\in[T_{2(k-1)k+4s-3}, T_{2(k-1)k+4s-2}+2K_k]$. In other situations of the interval where $n$ lies, we can also prove $\mathcal{E}_{n}(x_\xi)\subseteq B(K,3\varepsilon_k+5\delta_k)$ with a little modification of the method above. When $n\to\infty$, forcing $k\to\infty$, $B(K,3\varepsilon_k+5\delta_k)\to K$, hence we have $\mathcal{E}_{n}(x_\xi)=K$.
\qed

\begin{Rem}
Theorem \ref{maintheorem-DC1inSaturated} just states the situation where $K$ contains a measure $\mu$ which is the convex combination of two measures. Actually, with little modification, Theorem \ref{maintheorem-DC1inSaturated} also holds for any $K\subseteq \mathcal M_f(X)$ if $K$ contains a measure $\mu$ which is the convex combination of finite measures. Here we omit it.
\end{Rem}

\section{Proof of Theorems \ref{maintheorem-combination}, \ref{maintheorem-recurrent}  and \ref{Th-E-statisticallanguage}}\label{section-mainproof-combination}
Similar as  \cite{T16}, \cite{HTW} and \cite{DongTian2017}, we also deal with many  refined recurrent levels which will used not only to prove Theorems \ref{maintheorem-combination} and \ref{maintheorem-recurrent} but also to show Theorem \ref{Th-E-statisticallanguage}. Now let us recall their definitions.  
%For any $\mu,\mu_1,\mu_2\in \mathcal M_f(X)$; $M,N\subseteq\mathcal M_f(X)$, we denote $S_\mu=supp(\mu)=\{x\in X|\ \mu(U)>0\ $for any neighborhood $U$ of $x\}$ the support of $\mu$ and $\mathrm{cov}\{M,N\}=\{\theta\lambda_1+(1-\theta)\lambda_2|\ \theta\in[0,1],\ \lambda_1\in M,\ \lambda_2\in N\}$, particularly $\mathrm{cov}\{\mu_1,\mu_2\}=\{\theta\mu_1+(1-\theta)\mu_2|\ \theta\in[0,1]\}$. Obviously, $\mathrm{cov}\{M,N\}\subseteq\mathcal M_f(X)$. 
  Given $x\in X$, let
$C_x=\overline{\bigcup_{m\in V_f(x)}S_m}.$ Let $BR^\#:=BR\setminus QW$,
\begin{align*}
W^\#\ \ &:=\ \ \{x\in BR^\#\ |\ S_\mu=C_x\ \mathrm{for\ every}\ \mu\in V_f(x)\},\\
V^\#\ \ &:=\ \ \{x\in BR^\#\ |\ \exists\mu\in V_f(x)\ \mathrm{such\ that}\ S_\mu=C_x\},\\
S^\#\ \ &:=\ \ \{x\in X|\ \cap_{\mu\in V_f(x)}S_\mu\neq\emptyset\}.
\end{align*}
Then we can divide $BR^\#$ into following several levels with different asymptotic behavior:
\begin{align*}
BR_1\ \ &:=\ \ W^\#,\\
BR_2\ \ &:=\ \ V^\#\cap S^\#,\ \ BR_3\ \ :=\ \ V^\#,\\
BR_4\ \ &:=\ \ V^\#\cup(BR^\#\cap S^\#),\ \ BR_5\ \ :=\ \ BR^\#.
\end{align*}
Immediately, $BR_1\subseteq BR_2\subseteq BR_3\subseteq BR_4\subseteq BR_5$.
Denote
\begin{align*}
W^*\ \ &:=\ \ \{x\in QW\ |\ S_\mu=C_x\ \mathrm{for\ every}\ \mu\in V_f(x)\},\\
V\ \ &:=\ \ \{x\in QW\ |\ \exists\mu\in V_f(x)\ \mathrm{such\ that}\ S_\mu=C_x\},\\
S\ \ &:=\ \ \{x\in X|\ \cap_{\mu\in V_f(x)}S_\mu\neq\emptyset\}.
\end{align*}
Later, we will see that $W^*=W$. Now we can divide $QW$ into following several levels with different asymptotic behavior:
\begin{align*}
QW_1\ \ &:=\ \ W^*,\\
QW_2\ \ &:=\ \ V\cap S,\ \ QW_3\ \ :=\ \ V,\\
QW_4\ \ &:=\ \ V\cup(QW\cap S),\ \ QW_5\ \ :=\ \ QW.
\end{align*} 
These levels are related the different statistical $\omega$-limit sets, see Section \ref{section-statistical}.

For a collection of subsets $Z_1,Z_2,\cdots,Z_k\subseteq X(k\geq 2)$, we denote GS$\{Z_1, 
Z_2,\cdots,Z_k\}=\{Z_{2}\setminus Z_1,Z_{3}\setminus Z_2,\cdots,Z_k\setminus Z_{k-1}\}$ the gap sets of the sequence.
\begin{Def}
 We say $\{Z_i\}_{i=1}^k$ has  uncountable DC1-scrambled gap with respect to $Y\,(\subseteq X)$ if $\,S\,\cap \,Y$ contains an   uncountable DC1-scrambled subset for any $S\in \mathrm{GS}\{Z_1,Z_2,\cdots,Z_k\}$.
\end{Def}

\begin{Thm}\label{Th-best-refinedversion}
Suppose that $(X,f)$ has specification property, $\varphi$ is a continuous function on $X$. % and $I_{\varphi}(f)\neq\emptyset$. 
Then 
\begin{description}
\item[(a)] If $(X,f)$ is not uniquely ergodic, $\{QW_1, QW_2, QW_3, QW_4, QW_5, BR_1, BR_2, BR_3,  BR_4, BR_5\}$ has  uncountable DC1-scrambled gap with respect to $ Trans$;

\item[(b)] If $I_{\varphi}\neq \emptyset$, then $\{QW_1, QW_2, QW_3, QW_4, QW_5, BR_1, BR_2, BR_3, BR_4, BR_5\}$ has  uncountable DC1-scrambled gap with respect to $ Trans\cap I_{\varphi}$;

\item[(c)] If $Int(L_\varphi)\neq\emptyset$, then for any $a\in Int(L_\varphi)$, $\{QW_1, QW_2, QW_3, QW_4, QW_5, BR_1, 
 BR_2, BR_3, BR_4, \\ BR_5\}$ has  uncountable DC1-scrambled gap with respect to $Trans\cap R_{\varphi}(a)$.

 \item[(d)]   If $(X,f)$ is not uniquely ergodic,  $\{QW_1, QW_2, QW_3, QW_4, QW_5, BR_1, 
 BR_2, BR_3,  BR_4, BR_5\}$ has  uncountable DC1-scrambled gap with respect to $ Trans\cap R_{\varphi}$.

\end{description}
\end{Thm}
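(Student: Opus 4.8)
The plan is to deduce every case from Theorem \ref{maintheorem-DC1inSaturated}. For a fixed target set $Y$ --- one of $Trans$, $Trans\cap I_\varphi$, $Trans\cap R_\varphi(a)$, $Trans\cap R_\varphi$ --- and a fixed gap $S\in\mathrm{GS}\{QW_1,\dots,QW_5,BR_1,\dots,BR_5\}$, I would construct a connected non-empty compact set $K\subseteq\mathcal M_f(X)$ with $G_K\cap Trans\subseteq S\cap Y$ and satisfying the distal-pair hypothesis of Theorem \ref{maintheorem-DC1inSaturated}; applying that theorem to $K$ with $U=X$ then yields an uncountable DC1-scrambled subset $S_K\subseteq G_K\cap Trans\subseteq S\cap Y$, which is exactly what each clause demands. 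The point is that for a transitive $x\in G_K$ one has $\omega_f(x)=X$ and $V_f(x)=K$, so every property of $x$ is read off from $K$ alone.

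First I would set up the dictionary. For transitive $x\in G_K$ we have $C_x=\overline{\bigcup_{\mu\in K}S_\mu}$, and Propositions \ref{prop1}, \ref{prop2}, \ref{Trans-BR} give $x\in QW\Leftrightarrow C_x=X$ and $x\in BR^{\#}\Leftrightarrow C_x\subsetneq X$, while the clauses defining $W^{\#}/W^{*}$, $V^{\#}/V$ and $S^{\#}/S$ become ``$S_\mu=C_x$ for all $\mu\in K$'', ``$S_\mu=C_x$ for some $\mu\in K$'' and ``$\bigcap_{\mu\in K}S_\mu\neq\emptyset$'' respectively. Since the accumulation set of $\langle\varphi,\mathcal E_n(x)\rangle$ is exactly $\{\int\varphi\,d\mu:\mu\in K\}$, one also gets $x\in I_\varphi\Leftrightarrow\int\varphi\,d\mu$ is non-constant on $K$, $x\in R_\varphi(a)\Leftrightarrow\int\varphi\,d\mu\equiv a$ on $K$, and $x\in R_\varphi\Leftrightarrow\int\varphi\,d\mu$ is constant on $K$. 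Thus each of the nine gaps, together with the constraint coming from $Y$, becomes a purely measure-theoretic recipe for the supports and $\varphi$-barycentres of the members of $K$.

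Next I would realize each recipe by gluing ergodic building blocks --- available densely by Propositions \ref{specification-entropy-dense} and \ref{specification-full-support-dense} --- along segments into connected compact sets, letting the supports fill $X$ for the $QW$-gaps and confining them to a fixed proper closed invariant set $Y_0$ for the $BR^{\#}$-gaps (so that $C_x=Y_0\subsetneq X$). A segment from a full-support (resp.\ support-$Y_0$) measure to one of strictly smaller support handles $QW_2\setminus QW_1$ (resp.\ $BR_2\setminus BR_1$); a ``V'' through two disjoint-support measures together with one of full (resp.\ $Y_0$-)support handles $QW_3\setminus QW_2$ (resp.\ $BR_3\setminus BR_2$); an arc or star of proper-support measures covering the target set while no single one fills it, arranged either to share or to avoid a common support point, handles $QW_4\setminus QW_3$ and $QW_5\setminus QW_4$ and their $BR^{\#}$-analogues; and a segment inside $\mathcal M_f(Y_0)$ for a nondegenerate minimal $Y_0$ handles the transition gap $W^{\#}=BR_1\setminus QW_5$. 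For part (b) I would make two blocks integrate $\varphi$ to different values; for part (d) I would fix the common value to a convenient barycentre of the blocks; for part (c) I would force every block, hence all of $K$, into the affine slice $\{\mu:\int\varphi\,d\mu=a\}$, which is non-empty and rich since $a\in Int(L_\varphi)$. Finally, to meet the hypothesis of Theorem \ref{maintheorem-DC1inSaturated}, I would always include in $K$ a measure $\mu=\theta\mu_1+(1-\theta)\mu_2$ whose components are ergodic with nondegenerate minimal support, so that $G_{\mu_1}$ and $G_{\mu_2}$ contain distal pairs by Proposition \ref{distal-dense}.

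The hard part will be fulfilling all three demands on $K$ simultaneously. The gaps in which no measure may have support equal to $C_x$ (namely $QW_4\setminus QW_3$, $QW_5\setminus QW_4$ and their $BR^{\#}$-analogues) force a connected compact family whose supports sweep out the whole target while each remains a proper subset, and at the same time require $\bigcap_{\mu\in K}S_\mu$ to be empty or non-empty exactly as prescribed; keeping such a family compact and connected is the first delicate point, and is where the arc/star construction must be carried out with care. More seriously, in parts (c) and (d) the distal-pair measure $\mu\in K$ must itself respect the barycentre constraint, so, being a convex combination of ergodic measures of nondegenerate minimal support, its components' $\varphi$-averages must bracket the prescribed value $a$ (and, for the $W^{\#}$ gap, lie inside the chosen minimal $Y_0$). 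Producing minimal sets whose $\varphi$-averages straddle an arbitrary $a\in Int(L_\varphi)$, so that the distal pairs can be placed on the slice $\{\mu:\int\varphi\,d\mu=a\}$, is the real obstacle; I would attack it by interpolating along a path of ergodic measures furnished by Proposition \ref{specification-entropy-dense} to hit the exact value $a$, while retaining, via Proposition \ref{distal-dense}, a minimal-support component that carries a distal pair.
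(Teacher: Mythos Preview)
Your plan coincides with the paper's: for each of the nine gaps one manufactures a compact connected $K\subseteq\mathcal M_f(X)$ out of segments joining ergodic measures of nondegenerate minimal support (Proposition~\ref{distal-dense}) together with, where needed, a full-support measure (Proposition~\ref{specification-full-support-dense}), verifies via Propositions~\ref{prop1}--\ref{Trans-BR} that $G_K\cap Trans$ lies in the intended gap, and applies Theorem~\ref{maintheorem-DC1inSaturated}; in (c) the paper pairs minimal-support ergodic measures whose $\varphi$-integrals straddle $a$ and takes the convex combination hitting $a$, exactly as your last paragraph proposes, and (d) is reduced to (c) or (a) according as $Int(L_\varphi)$ is nonempty or not.

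The one place your sketch needs correction is the transition gap $W^{\#}=BR_1\setminus QW_5$ for part~(b). Requiring $Y_0$ to be \emph{minimal} is too rigid there: a minimal subsystem may well be uniquely ergodic (or have $\int\varphi\,d\mu$ constant on $\mathcal M_f(Y_0)$), so a segment inside $\mathcal M_f(Y_0)$ cannot be arranged to lie in $I_\varphi$. The paper's fix is to drop minimality of $Y_0$ and instead take $K=\mathrm{cov}\{\tfrac12\mu_1+\tfrac12\mu_2,\ \tfrac13\mu_1+\tfrac23\mu_2\}$ for two disjoint nondegenerate minimal supports with $\int\varphi\,d\mu_1\neq\int\varphi\,d\mu_2$: every member of $K$ then has support exactly $S_{\mu_1}\cup S_{\mu_2}\subsetneq X$ (so $S_\mu=C_x$ for all $\mu\in K$, giving $W^{\#}$), while $\int\varphi\,d\mu$ is non-constant on $K$ (giving $I_\varphi$). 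The same device, with $K$ a singleton $\{\theta\mu_1+(1-\theta)\mu_2\}$ chosen so that the integral equals $a$, handles $W^{\#}$ in part~(c); this is precisely the ``components bracket $a$'' idea you already isolate in your final paragraph.
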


\begin{Rem} If $a\in L_\varphi \setminus Int(L_\varphi),$ Theorem  \ref{Th-best-refinedversion} may be not true even for Li-Yorke chaotic. For example, if $(X,f)$ is full shift of two symbols (which satisfies specification), taking $orb(p,f), orb(q,f)$ to be two different periodic orbits with period $\geq 2$ and letting  $\varphi$ be a continuous function such that $\varphi|_{orb(p,f)}=0$, $\varphi|_{orb(q,f)}=1$ and for any $x\in X\setminus (orb(p,f)\cup orb(q,f)) $, $0<\varphi(x)<1$.  In this case $L_\phi=[0,1]$. Let $\mu_p,\mu_q$ denote the periodic measures supported on the orbit of $p,q$. It is not difficult to check that  $G_{\mu_p}\cap Trans\subseteq R_\phi(0)\cap Trans \subseteq BR_1$ and $G_{\mu_q}\cap Trans\subseteq R_\phi(1)\cap Trans \subseteq BR_1$ so that $\{QW_1, QW_2, QW_3, QW_4, QW_5\} $ and $\{  BR_1, BR_2, 
BR_3, BR_4, BR_5\}$ have empty gap with respect to $R_\phi(0)\cap Trans  $ and $R_\phi(1)\cap Trans  $.  So most cases can not have  any kind of chaotic behavior with respect to $R_\phi(0)\cap Trans  $ and $R_\phi(1)\cap Trans  $. By Theorem \ref{maintheorem-DC1inSaturated} $G_{\mu_p},G_{\mu_q} $ %$R_\phi(0) ,R_\phi(1) $ and 
   all contain uncountable DC1-scrambled subsets and so do $R_\phi(0)\cap Trans\cap BR_1, R_\phi(1)\cap Trans\cap BR_1$ .  However,  
%for which everyone is composed of one periodic orbit so that does not contain transitive points and Li-Yorke pair.
   $R_\phi(0)   $ and $R_\phi(1)   $ has zero topological entropy by (\ref{eq-1}). In particular, this implies that there exists an uncountable DC1-scrambled  set with zero topological entropy. 
\end{Rem}

Theorem \ref{Th-best-refinedversion} implies  Theorems  \ref{maintheorem-combination} and \ref{maintheorem-recurrent}   so that we only need to prove Theorem \ref{Th-best-refinedversion} .

\subsection{Distal Pair in Minimal Sets}%Technique Lemmas}
\label{section-mainlemma}

\begin{Prop}\label{distal-dense}
Suppose that $(X,f)$ has specification property, then
%then the measures whose generic points has distal pair is dense in $\mathcal M_f(X)$. Actually,
$$\{\mu\in \mathcal M_f(X)|\mu\ is\ ergodic,\ S_\mu\ is\ nondegenerate\ and\ minimal \}$$ is dense in $\mathcal M_f(X)$
and for any $\mu$ in such set, $G_\mu$ has distal pair.
\end{Prop}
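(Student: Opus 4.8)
My plan is to treat the two assertions separately, since the distal–pair statement is essentially measure–theoretic and needs only that $\mu$ is ergodic with nondegenerate minimal support, whereas the density statement is where the specification property does the work. For the distal pair, I would realize a distal pair of $\mu$–generic points as a generic point of a suitable self–joining of $\mu$. Define $\Phi\colon X\to X\times X$ by $\Phi(x)=(x,f(x))$ and set $m:=\Phi_*\mu$ on the product system $(X\times X,f\times f)$. Since $\Phi\circ f=(f\times f)\circ\Phi$ and $\mu$ is $f$–invariant, $m$ is $(f\times f)$–invariant; moreover $\Phi$ is injective with measurable inverse given by the first–coordinate projection, so $(X\times X,m,f\times f)$ is measurably isomorphic to $(X,\mu,f)$ and hence ergodic. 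Both marginals of $m$ equal $\mu$ (the second because $f_*\mu=\mu$). The geometric point is that $\operatorname{supp}(m)=\overline{\{(x,f(x)):x\in S_\mu\}}$ is bounded away from the diagonal: because $S_\mu$ is minimal and has at least two points, it contains no fixed point of $f$ (a fixed point would give a proper nonempty closed invariant subset), so $x\mapsto d(x,f(x))$ is continuous and strictly positive on the compact set $S_\mu$, giving $\gamma:=\min_{x\in S_\mu}d(x,f(x))>0$; by continuity every $(x',y')\in\operatorname{supp}(m)$ satisfies $y'=f(x')$ with $x'\in S_\mu$, whence $d(x',y')\ge\gamma$.

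I would then pick any point $(p,q)$ that is generic for $m$; such points are $m$–full measure by Birkhoff's theorem since $m$ is ergodic, and in particular lie in $\operatorname{supp}(m)$. Projecting the convergence $\frac1n\sum_{i=0}^{n-1}\delta_{(f^i p,\,f^i q)}\to m$ to each coordinate, and using that the marginals are $\mu$, shows $p,q\in G_\mu$. Since $\operatorname{supp}(m)$ is $(f\times f)$–invariant and contains the forward orbit of $(p,q)$, we get $d(f^i p,f^i q)\ge\gamma$ for all $i\ge0$; in particular $p\ne q$ and $\liminf_{i\to\infty}d(f^i p,f^i q)\ge\gamma>0$, so $(p,q)$ is a distal pair in $G_\mu$. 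Note that this part uses neither specification nor the density assertion.

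For the density assertion I would first reduce, via Proposition \ref{specification-entropy-dense}, to approximating an ergodic measure $\nu_0$ (these are dense) by an element of the target set. The crucial observation making this tractable is that \emph{every} invariant measure carried by a minimal set $M$ has support equal to $M$: its support is closed, nonempty and forward invariant, hence all of $M$ by minimality. Consequently, if $M$ is minimal and nondegenerate, any ergodic measure supported on $M$ already lies in the target set, and it suffices to build one minimal nondegenerate $M$ all of whose invariant measures are within $\varepsilon$ of $\nu_0$ (its ergodic measures, which exist, then qualify). To build such an $M$ I would construct an almost periodic (uniformly recurrent) point $x$, so that $M:=\overline{\operatorname{orb}(x,f)}$ is minimal, whose orbit windows all have empirical measure close to $\nu_0$; this forces every invariant measure on $M$ to be close to $\nu_0$, because invariant measures are weak$^*$ limits of empirical measures. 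The point $x$ is produced by a hierarchical specification construction in the spirit of Lemmas \ref{lemma-KK} and \ref{lemma-KKKK}: fix a $\nu_0$–generic orbit segment as a building word $W$ with $\mathcal E_{|W|}\approx\nu_0$, then inductively glue many copies of the level–$k$ block into the level–$(k+1)$ block using specification transitions of bounded length $K_{\varepsilon_k}$, arranging the concatenation so that every block reappears with bounded gaps (syndetically) in the limit point $x$.

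The main obstacle is precisely this last construction: one must simultaneously guarantee (i) syndetic recurrence of every block, which yields uniform recurrence of $x$ and hence minimality of $M$; (ii) that the bounded specification gaps occupy a vanishing proportion of every sufficiently long window, so that all long empirical averages stay $\varepsilon$–close to $\nu_0$ by Lemma \ref{measure distance}; and (iii) nondegeneracy of $M$, ensured by letting $W$ visit at least two distinct points (possible since $X$ is nondegenerate, and, if $\nu_0$ is a fixed–point mass, by inserting a rare short excursion that keeps the empirical measure close to $\nu_0$ while forcing $\operatorname{orb}(x,f)$ to be infinite). Balancing the growth of the block lengths against the fixed gap lengths $K_{\varepsilon_k}$ so that (i) and (ii) hold together is the technical heart; once $x$ is built, minimality together with the uniform closeness of empirical measures yields an ergodic measure on $M$ within $\varepsilon$ of $\nu_0$, completing the density claim.
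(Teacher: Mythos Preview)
Your proposal is correct, and on both halves it takes a genuinely different route from the paper.

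\textbf{Distal pair.} The paper proves Lemma~\ref{generic distal} by choosing two separated neighbourhoods $B_u,B_v\subset S_\mu$, using ergodicity to find $M$ with $\mu(B_u\cap f^{-M}B_v)>0$, picking $p$ there, setting $q=f^Mp$, and then invoking that the return times of the minimal point $p$ to $B_u\cap f^{-M}B_v$ are syndetic (Lemma~\ref{sydetic}) to force $\liminf d(f^ip,f^iq)>0$. Your argument is conceptually different and in fact simpler: the joining $m=(\mathrm{id}\times f)_*\mu$ is overkill, since what you are really using is that a minimal nondegenerate $S_\mu$ contains no fixed point, hence $\gamma:=\min_{x\in S_\mu}d(x,fx)>0$, and so for any $\mu$-generic $p\in S_\mu$ the pair $(p,fp)$ already satisfies $d(f^ip,f^{i+1}p)\ge\gamma$ for all $i$. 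This bypasses the syndetic return-time machinery entirely. The paper's argument, on the other hand, would still work if $S_\mu$ happened to contain a fixed point (it only needs minimality at the single point $p$ and two separated neighbourhoods), so it is slightly more robust in principle, though in the present setting your shortcut is perfectly adequate.

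\textbf{Density.} The paper does not construct anything: it cites \cite[Theorem~A]{HTW} to get density of ergodic measures with minimal support, and then rules out the degenerate case by a short contradiction argument (if only fixed-point Dirac measures were locally dense, every measure in some open set would itself be a fixed-point Dirac measure, contradicting Proposition~\ref{specification-full-support-dense}). Your plan to build directly, via a hierarchical specification gluing, an almost periodic point with all long-window empirical measures close to $\nu_0$ is the standard way such results are actually proved, and is more self-contained; but it is also much longer to execute than the paper's two-line reduction to an external reference, and you have only sketched the balancing of block growth against gap lengths. If you want a fully self-contained write-up, that construction needs to be carried out; if you are willing to cite \cite{HTW}, the paper's shortcut is quicker.
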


To prove Proposition \ref{distal-dense}, we need some preliminaries. An infinite set $A=\{a_1<a_2<\cdots\}\subseteq \mathbb{N}$ is syndetic if there is an $N\in \mathbb{N}$ such that $a_{i+1}-a_i\leq N$ holds for any $i\in \mathbb{N}$. Denote $\mathcal{D}(A)=\mathrm{min}\{N\in\mathbb{N}\ |\ a_{i+1}-a_i\leq N\ \mathrm{holds\ for\ any}\ i\in \mathbb{N}\}$ and $\mathcal{F}_s=\{A\subseteq\mathbb{N}|\ A\ \mathrm{is\ syndetic}\}.$

\begin{Lem}\label{sydetic}
Given $(X,f)$, for any $p,q\in X$, if there is an $\varepsilon>0$ such that $\{i\ |\ d(f^ip,f^iq)>\varepsilon\}\in\mathcal{F}_s$. Then $p,q$ is distal.
\end{Lem}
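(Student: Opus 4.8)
The plan is to prove the slightly stronger assertion that $\liminf_{i\to\infty}d(f^ip,f^iq)>0$ (which is exactly distality in the sense used here) by exploiting the tension between the syndeticity of the separation set and the uniform continuity of the finitely many iterates $f^0,f^1,\dots,f^N$, where $N:=\mathcal{D}(\{i: d(f^ip,f^iq)>\varepsilon\})$ is the syndetic gap bound supplied by the hypothesis. The heuristic is that if the two orbits ever came very close at a time $i_0$, then uniform continuity would force them to stay $\varepsilon$-close for the next $N$ steps, producing a block of $N+1$ consecutive integers that misses the separation set entirely, thereby violating its syndeticity.

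Concretely, I would first set $A:=\{i\in\mathbb{N}: d(f^ip,f^iq)>\varepsilon\}$, so that $A\in\mathcal{F}_s$ with consecutive elements differing by at most $N=\mathcal{D}(A)$. From this I would extract the purely combinatorial fact that every block of $N$ consecutive integers lying beyond $\min A$ must contain a point of $A$: if some such block $\{m,m+1,\dots,m+N-1\}$ avoided $A$, then the element of $A$ immediately preceding $m$ and the element immediately following would be separated by a gap exceeding $N$, contradicting $A\in\mathcal{F}_s$.

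Next, using that $X$ is compact and that $f^0,f^1,\dots,f^N$ is a \emph{finite} family of uniformly continuous maps, I would choose a single $\delta\in(0,\varepsilon)$ so small that $d(u,v)<\delta$ implies $d(f^ju,f^jv)<\varepsilon$ for all $0\le j\le N$ simultaneously. Then I would argue by contradiction: were $\liminf_{i\to\infty}d(f^ip,f^iq)=0$, there would exist an $i_0$, as large as desired, with $d(f^{i_0}p,f^{i_0}q)<\delta$; applying the choice of $\delta$ to $u=f^{i_0}p$ and $v=f^{i_0}q$ yields $d(f^{i_0+j}p,f^{i_0+j}q)<\varepsilon$ for every $0\le j\le N$, so none of $i_0,i_0+1,\dots,i_0+N$ lies in $A$. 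Choosing $i_0>\min A$, this block of $N+1$ consecutive integers is disjoint from $A$, contradicting the combinatorial fact of the previous step. Hence $d(f^ip,f^iq)\ge\delta$ for all sufficiently large $i$, giving $\liminf_{i\to\infty}d(f^ip,f^iq)\ge\delta>0$, i.e. $p,q$ is distal.

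I do not expect a genuine obstacle; the only points requiring care are that $\delta$ must be selected \emph{after} $N$ is fixed, so that uniform continuity is invoked only for the finite collection $f^0,\dots,f^N$ and one $\delta$ serves all of them, and that the syndetic constant is correctly converted into the statement that every window of length $N$ (beyond $\min A$) meets $A$. It is worth noting that the argument uses nothing about the Pfister--Sullivan metric beyond its being a metric inducing the topology, so it applies verbatim to the original metric on $X$.
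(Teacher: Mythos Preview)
Your proposal is correct and follows essentially the same route as the paper's proof: both fix the syndetic gap bound (your $N$, the paper's $M$), use uniform continuity of the finitely many iterates $f^0,\dots,f^N$ to produce a single $\delta$ (the paper builds this via an inductive chain $\eta_1,\dots,\eta_M$), and then derive a contradiction from a block of $N{+}1$ consecutive indices missing the separation set. Your write-up is slightly more careful about placing $i_0>\min A$, but the argument is the same.
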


\begin{proof}
Suppose $p,q,\varepsilon$ is fixed, $\mathcal{D}(\{i\ |\ d(f^ip,f^iq)>\varepsilon\})=M$. Obviously $f$ is uniform continuous since $f$ is continuous and $X$
is compact. So we can get $\eta_1$ such that for any $x,y\in X$, if $d(x,y)<\eta_1$, then $d(fx,fy)<\varepsilon$. By induction, we get $\eta_k$ such that for any $x,y\in X$, if $d(x,y)<\eta_k$, then $d(fx,fy)<\eta_{k-1}$, until $k=M$. Set $\eta=\mathrm{min}\{\varepsilon,\eta_1,\eta_2,\cdots,\eta_M\}$, we claim that $\liminf_{n\to\infty}d(f^np,f^nq)\geq\eta$. If not, there is an $n_0\in\mathbb{N}$ such that $d(f^{n_0}p,f^{n_0}q)<\eta$. By the discussion above, we have $d(f^{n_0+k}p,f^{n_0+k}q)<\varepsilon$ for any $k\in\{0,1,\cdots,M\}$, which conflicts with $\mathcal{D}(\{i\ |\ d(f^ip,f^iq)>\varepsilon\})=M.$
\end{proof}

\begin{Lem}\label{generic distal}
Given $(X,f)$. Suppose that $\mu\in\mathcal{M}_f^e(X)$, $S_\mu$ is nondegenerate and minimal. Then, there are two distinct points $p,q\in G_\mu$ such that $p,q$ is distal.
\end{Lem}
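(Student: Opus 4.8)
The plan is to pass to the product system $(S_\mu\times S_\mu,f\times f)$ and to realize the pair as the two coordinates of a point whose orbit stays away from the diagonal $\Delta:=\{(y,y):y\in S_\mu\}$, while using an invariant measure to force both coordinates to be generic for $\mu$. Observe first that if $(p,q)$ lies in a closed $(f\times f)$-invariant set $K\subseteq S_\mu\times S_\mu$ with $K\cap\Delta=\emptyset$, then $\varepsilon_0:=\dist(K,\Delta)>0$ and $d(f^ip,f^iq)\ge\varepsilon_0$ for every $i$, so the separation time set is all of $\mathbb{N}$; in particular it is syndetic, and Lemma \ref{sydetic} gives that $(p,q)$ is distal. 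Thus it suffices to locate such a $K$ and, inside it, a pair whose two coordinates are generic for $\mu$.

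For the existence of $K$ I would argue that $(S_\mu,f)$ is not proximal. Every nonempty closed invariant subset of $S_\mu\times S_\mu$ contains a minimal set, and a minimal set $M$ either is disjoint from $\Delta$ or satisfies $M\subseteq\Delta$ (since $M\cap\Delta$ is closed and invariant, hence empty or all of $M$). If every minimal subset of $S_\mu\times S_\mu$ were contained in $\Delta$, then $\Delta$ would be the unique minimal set and every pair in $S_\mu$ would be proximal; but a nondegenerate minimal system generated by a single map cannot be proximal (a minimal proximal system is trivial), which is a standard fact about minimal systems. Hence there is a minimal $M$ with $M\cap\Delta=\emptyset$, and I take $K=M$. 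Moreover $\pi_1(M)$ and $\pi_2(M)$ are closed, invariant and nonempty in the minimal set $S_\mu$, so both projections are onto $S_\mu$.

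Finally I make the coordinates generic. Since $\pi_1\colon M\to S_\mu$ is a continuous surjection, $\mu$ admits a lift $\nu\in\mathcal M(M)$ with $(\pi_1)_*\nu=\mu$, and any weak$^*$ limit $\lambda$ of the averages $\frac1n\sum_{i=0}^{n-1}(f\times f)^i_*\nu$ is $(f\times f)$-invariant on $M$ with $(\pi_1)_*\lambda=\mu$. As $\mu$ is ergodic, almost every ergodic component $\lambda'$ of $\lambda$ still has $(\pi_1)_*\lambda'=\mu$, and by Birkhoff's theorem $\lambda'$-almost every $(p,q)$ is generic for $\lambda'$; its first coordinate is then generic for $\mu$, and $(p,q)\in M$ makes it a distal pair. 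The main obstacle is to force the \emph{second} marginal $(\pi_2)_*\lambda'$ to equal $\mu$ as well, so that $q\in G_\mu$ too. This is automatic when $S_\mu$ is uniquely ergodic, because then every point of $S_\mu$ is generic for $\mu$; this already covers the cases needed later (for instance when $S_\mu$ is a periodic orbit, as furnished by the specification property in Proposition \ref{distal-dense}). In the general minimal case the averaging must instead be arranged so that \emph{both} marginals are $\mu$ simultaneously; the precise point to verify is that the off-diagonal set $M$ carries an $(f\times f)$-invariant measure whose two marginals both equal $\mu$, after which its ergodic components supply the required distal pair lying in $G_\mu$.
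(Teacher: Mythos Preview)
Your argument is incomplete in exactly the place you identify, and the gap is not a formality. For an arbitrary off-diagonal minimal set $M\subseteq S_\mu\times S_\mu$ there is no reason to expect an $(f\times f)$-invariant measure on $M$ with \emph{both} marginals equal to $\mu$. Your one-sided lift gives $(\pi_1)_*\lambda'=\mu$ for almost every ergodic component $\lambda'$ (your use of extremality of $\mu$ here is fine: pushforwards of ergodic measures under factor maps are ergodic), but the second marginal could well be a different ergodic measure on $S_\mu$ when $(S_\mu,f)$ is not uniquely ergodic. So as written the proof only establishes the lemma in the uniquely ergodic case.

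The paper's proof avoids the joining problem entirely with a one-line device: it takes $q=f^{M}p$ for a suitably chosen $p\in G_\mu\cap S_\mu$ and integer $M$. Concretely, pick disjoint open sets $B_u,B_v\subseteq S_\mu$ with $d(B_u,B_v)=\zeta>0$; ergodicity of $\mu$ gives $M$ with $\mu(B_u\cap f^{-M}B_v)>0$, so one can choose $p\in B_u\cap f^{-M}B_v\cap G_\mu\cap S_\mu$. Since $p$ is a minimal point, the return-time set $N(p,B_u\cap f^{-M}B_v)$ is syndetic; along these times $f^{a_k}p\in B_u$ while $f^{a_k}q=f^{a_k+M}p\in B_v$, so $d(f^{a_k}p,f^{a_k}q)\ge\zeta$ on a syndetic set, and Lemma~\ref{sydetic} gives distality. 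The second-coordinate genericity is then free: $q=f^{M}p\in G_\mu$ because $G_\mu$ is forward invariant. In effect the paper chooses the self-joining $(\mathrm{id}\times f^{M})_*\mu$, whose two marginals are tautologically $\mu$, rather than searching for a joining on an unspecified minimal $M$. Your product-system framework is correct up to the point where you need a $\mu$-self-joining supported off the diagonal; the paper's choice of $q=f^{M}p$ is precisely what supplies one.
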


\begin{proof}
By the hypothesis, we can choose two distinct points $u,v\in S_\mu$. Denote $B_u,B_v$ the open neighborhood of $u,v$ respectively. Here we can assume $B_u\cap B_v=\emptyset$ and $d(B_u,B_v)=\zeta>0$ since $X$ is a metric space. Obviously $\mu(S_\mu)=1,\mu(B_u)>0,\mu(B_v)>0.$ Notice that $\mu$ is ergodic, so $\mu(G_\mu)=1$ and there exists an $M\in \mathbb{N}$ such that $\mu(B_u\cap f^{-M}B_v)>0$. So $\mu(B_u\cap f^{-M}B_v\cap G_\mu\cap S_\mu)>0$. Fix a $p\in B_u\cap f^{-M}B_v\cap G_\mu\cap S_\mu$, then $N(p,B_u\cap f^{-M}B_v)=\{a_1<a_2<\cdots\}\in \mathcal{F}_s$ since $p\in S_\mu$ is a minimal point and $B_u\cap f^{-M}B_v$ is an open neighborhood of $p$. Set $q=f^Mp$, for any $k\in \mathbb{N}$, we have $f^{a_k}q\in B_v$ since $f_{a_k}p\in B_u\cap f^{-M}B_v$. So $d(f^{a_k}p, f^{a_k}q)\geq\zeta>0$. Notice that $\{a_1,a_2,\cdots\}\in\mathcal{F}_s$, so $p,q$ is distal by lemma \ref{sydetic}. $p\in G_\mu\Rightarrow q\in G_\mu$.
\end{proof}

{\bf Proof of Proposition \ref{distal-dense}}
For system $(X,f)$ with specification, we have
$$\{\mu\in \mathcal M_f(X)|\mu\ is\ ergodic,\ S_\mu\ is\ minimal \}$$
is dense in $\mathcal M_f(X)$, which is a direct corollary of \cite[Theorem A]{HTW}. Here we claim that
$$\{\mu\in \mathcal M_f(X)|\mu\ is\ ergodic,\ S_\mu\ is\ nondegenerate\ and\ minimal \}$$
is also dense in $\mathcal M_f(X)$. If not, there will be a open set $U\subseteq \mathcal M_f(X)$ such that
$$\{\mu\in \mathcal M_f(X)|\mu\ is\ ergodic,\ S_\mu\ is\ degenerate\ and\ minimal \}$$
is dense in $U$, which implies that any measure in $U$ can be approximated by the Dirac measure concentrate on a fix point. i.e. for any $\mu\in U$, there is a sequence $\{x_i\}_{i=1}^{\infty}$ such that $\lim_{i\to\infty}\delta_{x_i}=\mu$. Without loss of generality, we can assume that $\lim_{i\to\infty}x_i=x$. Then for any continuous function $f$ on $X$, $$\int fd\mu=\lim_{i\to\infty}\int fd\delta_{x_i}=\lim_{i\to\infty}f(x_i)=f(x)=\int fd\delta_x.$$
So we have $\mu=\delta_x$, which means measures in $U$ are all Dirac measures, which conflict with Proposition \ref{specification-full-support-dense}. So the conflict and Lemma \ref{generic distal} end this proof.
\qed

\subsection{Proof of Theorem \ref{Th-best-refinedversion} }

For any $ \mu_1,\mu_2\in \mathcal M_f(X)$, we define $$\mathrm{cov}\{\mu_1,\mu_2\}=\{\theta\mu_1+(1-\theta)\mu_2|\ \theta\in[0,1]\}.$$ %Obviously, $\mathrm{cov}\{M,N\}\subseteq\mathcal M_f(X)$.

\bigskip

{\bf Proof of Item (a). }
By \cite[Lemma 3.4]{HTW}, we can take $\mu_1,\mu_2,\cdots$ satisfying Proposition \ref{distal-dense} and $\overline{\bigcup_{i=1}^{\infty} S_{\mu_i}}=X$. Then their support are naturally mutually disjoint and for any finite set $\Lambda\subseteq\mathbb{N}^+$, $\bigcup_{i\in \Lambda} S_{\mu_i}\neq X$ since $S_{\mu_i}$ is minimal. Let $\mu$ be a measure with full support and take $\nu_i=\frac{i-1}{i}\mu_1+\frac{1}{i}\mu_i,\ i\in\{1,2,\cdots\}$. then we have
\begin{align}\label{construct-measure-QW}
\lim_{i\to\infty}d(\nu_i,\mu_1)\ &=\ \lim_{i\to\infty}d(\frac{i-1}{i}\mu_1+\frac{1}{i}\mu_i,\frac{i-1}{i}\mu_1+\frac{1}{i}\mu_1)\notag,\\
&\leq\ \lim_{i\to\infty}\frac{1}{i}d(\mu_i,\mu_1),\\
&\leq\ \lim_{i\to\infty}\frac{1}{i}\notag,\\
&=\ 0.\notag
\end{align}
Here we consider $\bigcup_{i=1}^\infty\mathrm{cov}\{\nu_i,\nu_{i+1}\}$. By (\ref{construct-measure-QW}), it is easy to check that $\bigcup_{i=1}^\infty\mathrm{cov}\{\nu_i,\nu_{i+1}\}$ is connected and compact. One can observe that $S_\kappa\neq X$ for any $\kappa\in\bigcup_{i=1}^\infty\mathrm{cov}\{\nu_i,\nu_{i+1}\}$. Moreover, $\bigcap_{\kappa\in\bigcup_{i=1}^\infty\mathrm{cov}\{\nu_i,\nu_{i+1}\}}S_\kappa=S_{\mu_1}$ and $\overline{\bigcup_{\kappa\in\bigcup_{i=1}^\infty\mathrm{cov}\{\nu_i,\nu_{i+1}\}}S_\kappa}=X$. Let
\begin{align*}
K_1\ &:=\ \ \mathrm{cov}\{\mu_1,\mu\},\\
K_2\ &:=\ \ \mathrm{cov}\{\mu_1,\mu\}\cup\mathrm{cov}\{\mu_1,\mu_2\},\\
K_3\ &:=\ \ \bigcup_{i=1}^\infty\mathrm{cov}\{\nu_i,\nu_{i+1}\},\\
K_4\ &:=\ \ \bigcup_{i=1}^\infty\mathrm{cov}\{\nu_i,\nu_{i+1}\}\cup\mathrm{cov}\{\mu_1,\mu_2\},\\
K_5\ &:=\ \ \{ \mu_1\},\\
K_6\ &:=\ \ \mathrm{cov}\{\mu_1,\nu_2\},\\%\\
%\end{align*}
%\begin{align*}
K_7\ &:=\ \ \mathrm{cov}\{\mu_1,\mu_2\},\\
K_8\ &:=\ \ \mathrm{cov}\{\mu_1,\nu_2\}\cup\mathrm{cov}\{\mu_1,\nu_3\},\\
K_9\ &:=\ \ \mathrm{cov}\{\mu_1,\mu_2\}\cup\mathrm{cov}\{\mu_1,\mu_3\}.
\end{align*}
Using Theorem \ref{maintheorem-DC1inSaturated} on $K_i, i\in\{1,2,3,4,5,6,7,8,9\}$, for any open set $U$, there is an uncountable scramble set $S_i\subseteq G_{K_i}\cap U\cap Trans$. By Proposition \ref{Trans-BR} and Proposition \ref{prop2}(c), we have $S_i\subseteq Trans$ implies $S_i\subseteq QW$ for $i\in\{1,2,3,4\}$ and $S_i\subseteq BR^\#$ for $i\in\{5,6,7,8,9\}$. One can observe that $G_{K_i}\subseteq QW_{i+1}\setminus QW_{i}$ for any $i\in\{1,2,3,4\}$, $G_{K_5}\subseteq BR_1\setminus QW_5$, $G_{K_i}\subseteq BR_{i-4}\setminus BR_{i-5}$ for any $i\in\{6,7,8,9\}$. Then the proof is completed.
\qed
\begin{Rem}  Let $QR=\bigcup_{\mu\in \mathcal M_f(X)}G_\mu$. The points in $QR$ are called quasiregular points of $f$ in \cite{Sig}. 
Note that $K_5$ in the proof above is a single measure, so we can replace $BR_{1}$ by $BR_{1}\cap QR$ and the theorem still holds. The same situation will happen in the proof of item (c).
\end{Rem}

\bigskip
{\bf Proof of Item (b). }
If $I_{\varphi}(f)\neq\emptyset$, then there exist $\lambda_1,\lambda_2\in\mathcal M_f(X)$ such that $\int\varphi d\lambda_1\neq\int\varphi d\lambda_2$. Note that the measure satisfying Proposition \ref{distal-dense} and measures with full support are both dense in $\mathcal M_f(X)$. Then we can choose $\mu_1,\mu_2,\cdots$ satisfying Proposition \ref{distal-dense} and $\overline{\bigcup_{i=1}^{\infty} S_{\mu_i}}=X$ such that $\int\varphi d\mu_1\neq\int\varphi d\mu_2\neq\int\varphi d\mu_3\neq\int\varphi d\mu$. Take $\nu_i=\frac{i-1}{i}\mu_1+\frac{1}{i}\mu_i,\ i\in\{1,2,\cdots\}$. Let
%\begin{align*}
%K_1\ &:=\ \ \mathrm{cov}\{\mu_1,\nu_1\}\\
%K_2\ &:=\ \ \mathrm{cov}\{\mu_1,\mu_2\}\\
%K_3\ &:=\ \ \mathrm{cov}\{\mu_1,\nu_1\}\cup\mathrm{cov}\{\mu_1,\nu_2\}\\
%K_4\ &:=\ \ \mathrm{cov}\{\mu_1,\mu_2\}\cup\mathrm{cov}\{\mu_1,\mu_3\}
%\end{align*}
\begin{align*}
K_1\ &:=\ \ \mathrm{cov}\{\mu_1,\mu\},\\
K_2\ &:=\ \ \mathrm{cov}\{\mu_1,\mu\}\cup\mathrm{cov}\{\mu_1,\mu_2\},\\
K_3\ &:=\ \ \bigcup_{i=1}^\infty\mathrm{cov}\{\nu_i,\nu_{i+1}\},\\
K_4\ &:=\ \ \bigcup_{i=1}^\infty\mathrm{cov}\{\nu_i,\nu_{i+1}\}\cup\mathrm{cov}\{\mu_1,\mu_2\}, \\
K_5\ &:=\ \ \mathrm{cov}\{\nu_2,\frac{1}{3}\mu_1+\frac{2}{3}\mu_2\},\\
K_6\ &:=\ \ \mathrm{cov}\{\mu_1,\nu_2\},\\%\\
%\end{align*}
%\begin{align*}
K_7\ &:=\ \ \mathrm{cov}\{\mu_1,\mu_2\},\\
K_8\ &:=\ \ \mathrm{cov}\{\mu_1,\nu_2\}\cup\mathrm{cov}\{\mu_1,\nu_3\},\\
K_9\ &:=\ \ \mathrm{cov}\{\mu_1,\mu_2\}\cup\mathrm{cov}\{\mu_1,\mu_3\}.
\end{align*}
%Using Theorem \ref{maintheorem-DC1inSaturated} on $K_i,i\in\{1,2,3,4\}$, for any open set $U$, there is an uncountable scramble set $S_i\subseteq G_{K_i}\cap U\cap Trans$. By Proposition \ref{Trans-BR} and Proposition \ref{prop2}(c), we have $S_i\subseteq Trans$ implies $S_i\subseteq BR^\#$ since $\bigcup_{i=1}^3 S_{\mu_i}\neq X$. One can observe that $G_{K_i}\subseteq BR_{i+1}\setminus BR_{i},\ i\in\{1,2,3,4\}$ and $G_{K_i}\subseteq I_{\varphi}(f)$ since $\int\varphi d\mu_1\neq\int\varphi d\mu_2\neq\int\varphi d\mu_3$, so the proof is completed.
One can observe that $G_{K_i}\subseteq I_{\varphi}(f)$ for any $i\in\{1,2,3,4,5,6,7,8,9\}$. Based on the discussion in the proof of item (a), we complete the proof.
\qed

\bigskip

{\bf Proof of Item (c). }
If $Int(L_\varphi)\neq\emptyset$, then for any $a\in Int(L_\varphi)$, there exist $\lambda_1,\lambda_2\in\mathcal M_f(X)$ such that $\int\varphi d\lambda_1<a<\int\varphi d\lambda_2$. By \cite[Lemma 3.4]{HTW}, we can take $\mu_1,\mu_2,\cdots$ satisfying Proposition \ref{distal-dense} and $\overline{\bigcup_{i=1}^{\infty} S_{\mu_i}}=X$. We can assume that $\{i\in[1,+\infty)|\ \int\varphi d\mu_i>a\}$ and $\{i\in[1,+\infty)|\ \int\varphi d\mu_i<a\}$ are both infinite set since measures satisfying Proposition \ref{distal-dense} are dense in $\mathcal M_f(x)$. Set $\{i\in[1,+\infty)|\ \int\varphi d\mu_i>a\}=\{m_i\}_{i=1}^{\infty}$ and $\{i\in[1,+\infty)|\ \int\varphi d\mu_i<a\}=\{n_i\}_{i=1}^{\infty}$. In order to simplify the proof, we assume $\{i\in[1,+\infty)|\ \int\varphi d\mu_i=a\}=\emptyset$. Now, we can choose proper $\{\theta_i\}_{i=1}^{\infty}\subseteq(0,1)$ such that $\nu_i=\theta_i\mu_{m_i}+(1-\theta)\mu_{n_i}$ and $\int\varphi d\nu_i=a$ for any $i\in\{1,2,\cdots\}$. We can also choose proper $\kappa_1,\kappa_2,\in(0,1)$ such that $\rho_1=\kappa_1\mu_{m_1}+(1-\kappa_1)\mu_{n_2}, \rho_2=\kappa_2\mu_{m_1}+(1-\kappa_2)\mu_{n_3}$ and $\int\varphi d\rho_1=\int\varphi d\rho_2=a$. By proposition \ref{specification-full-support-dense}, there are $\mu^*,\mu^{**}$ with full support such that $\int\varphi d\mu^*<a<\int\varphi d\mu^{**}$. Choosing proper $\iota\in(0,1)$ such that $\mu=\iota\mu^*+(1-\iota)\mu^{**}$ and $\int\varphi d\mu=a$. Take $\omega_i=\frac{i-1}{i}\nu_1+\frac{1}{i}\nu_i,\ i\in\{1,2,\cdots\}$. Let

%So, there is $\{m_i\}_{i=1}^{\infty}$, $\{n_i\}_{i=1}^{\infty}$ such that for any
%
%
%such that $\int\varphi d\mu_1<\int\varphi d\mu_2<a<\int\varphi d\mu_3<\int\varphi d\mu_4<\int\varphi d\mu_5$ and $\int\varphi d\mu^*<a<\int\varphi d\mu^{**}$. So we can choose proper $\theta,\kappa,\tau,\rho,\iota\in(0,1)$ such that $\nu_1=\theta\mu_1+(1-\theta)\mu_3$, $\nu_2=\tau\mu_1+(1-\tau)\mu_4$, $\nu_3=\rho\mu_1+(1-\rho)\mu_5$, $\nu_4=\kappa\mu_2+(1-\kappa)\mu_3$, $\mu=\iota\mu^*+(1-\iota)\mu^{**}$ and $\int\varphi d\nu_1=\int\varphi d\nu_2=\int\varphi d\nu_3=\int\varphi d\nu_4=\int\varphi d\mu=a$. So $\cap_{i=1}^{3}S_{\nu_i}=S_{\mu_1}$, $S_{\nu_2}\cap S_{\nu_4}=\emptyset$ and $S_\mu=X$. Let
%
%
%Note that the measure satisfying Proposition \ref{distal-dense} and measures with full support are both dense in $\mathcal M_f(x)$. Then we can choose $\mu^*,\mu^{**}$ with full support and $\mu_1,\mu_2,\cdots$ satisfying Proposition \ref{distal-dense} and $\overline{\bigcup_{i=1}^{\infty} S_{\mu_i}}=X$.
\begin{align*}
%K_1\ &:=\ \ \nu_1\\
%K_2\ &:=\ \ \mathrm{cov}\{\nu_1,\nu_2\}\\
%K_3\ &:=\ \ \mathrm{cov}\{\nu_2,\nu_4\}\\
%K_4\ &:=\ \ \mathrm{cov}\{\nu_1,\nu_2\}\cup\mathrm{cov}\{\nu_1,\nu_3\}\\
%K_5\ &:=\ \ \mathrm{cov}\{\nu_1,\nu_2\}\cup\mathrm{cov}\{\nu_2,\nu_4\}
K_1\ &:=\ \ \mathrm{cov}\{\nu_1,\mu\},\\
K_2\ &:=\ \ \mathrm{cov}\{\nu_1,\mu\}\cup\mathrm{cov}\{\nu_1,\nu_2\},\\
K_3\ &:=\ \ \bigcup_{i=1}^\infty\mathrm{cov}\{\omega_i,\omega_{i+1}\},\\K_4\ &:=\ \ \bigcup_{i=1}^\infty\mathrm{cov}\{\omega_i,\omega_{i+1}\}\cup\mathrm{cov}\{\omega_1,\nu_2\},\\%\\
%\end{align*}
%\begin{align*}
K_5\ &:=\ \ \{ \nu_1\},\\
K_6\ &:=\ \ \mathrm{cov}\{\nu_1,\rho_1\},\\
K_7\ &:=\ \ \mathrm{cov}\{\nu_1,\nu_2\},\\
K_8\ &:=\ \ \mathrm{cov}\{\nu_1,\rho_1\}\cup\mathrm{cov}\{\nu_1,\rho_2\},\\
K_9\ &:=\ \ \mathrm{cov}\{\nu_1,\nu_2\}\cup\mathrm{cov}\{\nu_2,\nu_3\}.
\end{align*}
One can observe that $G_{K_i}\subseteq R_{\varphi}(a)$, for any $i\in\{1,2,3,4,5,6,7,8,9\}$. Based on the discussion in the proof of item (a), we complete the proof.
\qed

\bigskip

{\bf Proof of Item (d). } 
  If $Int(L_\varphi)\neq\emptyset$, then one can get this from item (c)  by taking one $a\in Int(L_\varphi)$ since $R_\varphi(a)\subseteq R_\varphi$. On the other hand, $Int(L_\varphi)=\emptyset$, then $R_\varphi=X$ and one can get this from item(a).\qed  %We will prove Theorems \ref{maintheorem-irregular} and \ref{maintheorem-levelsets} in Section \ref{section-mainproof}.

  \begin{Rem} If $(X,f)$ is not uniquely ergodic,  there are two different invariant measures $\mu,\nu$ so that by weak$^*$ topology there exists a continuous function $\phi$ such that $\int\phi d\mu\neq \int \phi d\nu.$ Thus $Int(L_\phi)\neq\emptyset$. Note that   $I_{\phi}(f)\neq\emptyset$ is equivalent to  $Int(L_\phi)\neq \emptyset$  if the system has specification property, see \cite{Thompson2008}. Thus item (a)  can also be deduced from item (b).

\end{Rem}

\subsection{Proof of Theorem \ref{Th-E-statisticallanguage}}
The proof is based on \cite[Theorem H]{HTW}. From the proof of \cite[Theorem H]{HTW}, we know that $$x\in BR\Leftrightarrow x\in\omega_{B^*}(x)\ \text{and}\ x\in QW\Leftrightarrow x\in\omega_{\overline{d}}(x).$$
The construction of $x$ in the proof of Theorem \ref{Th-best-refinedversion}   always satisfies that $x\in Trans\cap BR$, which implies $\omega_{B^*}(x)=\omega_f(x)=X$ by \cite[Lemma 4.{}6]{HTW}. Since the dynamical systems with specification are not minimal but minimal points are dense, so for any $x\in Trans,\ \omega_{B_*}(x)=\emptyset$. Thus one can check that the  uncountable DC1-scrambled sets constructed by $K_1, K_2, K_5, K_6, K_7$ in the proof of  Theorem \ref{Th-best-refinedversion}  satisfy the five cases, which ends the proof.
\qed

%\begin{Rem}

%\end{Rem}

\section{Applications}
\subsection{Examples with Specification}
It is known from \cite{BJ} that any topologically mixing interval map satisfies specification.  For example, \cite{JMV} showed that there exists a set of parameter values $\Lambda\subseteq[0,4]$ of positive Lebesgue measure such that if $\lambda\in\Lambda$, then the logistic map $f_\lambda(x)=\lambda x(1-x)$ is topological mixing.

 Moreover, maps satisfying the specification property includes the mixing subshift of finite type, mixing sofic subshift, topological mixing uniformly hyperbolic systems and the time-1 map of the geodesic flow of compact connected negative curvature manifolds, for example, see \cite{SigSpe,Thompson2008}. 

So, all the results of Theorems   \ref{maintheorem-irregular} - \ref{maintheorem-DC1inSaturated} are all suitable for such systems.

\subsection{Examples Without Specification}
Now, we use our theorem on a type of subshift which may not have specification property. Before the statement, we need some preparations.

For any finite alphabet $A$, the $full\ symbolic\ space$ is the set $A^{\mathbb{Z}}=\{\cdots x_{-1}x_{0}x_{1}\cdots : x_{i}\in A\}$, which is viewed as a compact topological space with the discrete product topology. The set $A^{\mathbb{N_{+}}}=\{x_{1}x_{2}\cdots : x_{i}\in A\}$ is called $one$-$side\ full\ symbolic\ space$. The $shift\ action$ on $one$-$side\ full\ symbolic\ space$ is defined by
$$\sigma:\ A^{\mathbb{N_{+}}}\rightarrow A^{\mathbb{N_{+}}},\ \ \ x_{1}x_{2}\cdots\mapsto x_{2}x_{3}\cdots.$$
$(A^{\mathbb{N_{+}}},\sigma)$ forms a dynamical system under the discrete product topology which we called a shift. A closed subset $X\subseteq A^{\mathbb{N_{+}}}$ is called $subshift$ if it is invariant under the shift action $\sigma$. $\mathbf{w}\in A^{n}\triangleq \{x_1x_2\cdots x_n:\ x_{i}\in A\}$ is a $word$ of $subshift$ $X$ if there is an $x\in X$ and $k\in \mathbb{N}$ such that $\mathbf{w}=x_kx_{k+1}\cdots x_{k+n-1}$. Here we call $n$ the length of $\mathbf{w}$ denoted by $|\mathbf{w}|$. The $language$ of a subshift $X$, denoted by $\mathcal L(X)$, is the set of all words of $X$. Denote $\mathcal L_{n}(X)\triangleq \mathcal L(X)\bigcap A^{n}$ all the words of $X$ with length $n$.

Now we introduce the typical subshift of one-side full shift space $\beta$-shift. Basic references are \cite{R,Sm,PS}. 
It is worth mentioning that from    \cite{BJ} the set of parameters of $\beta$ for which   specification holds, is dense in $ (1,+\infty)$ but has Lebesgue zero measure.

Let $\beta > 1$ be a real number. We denote by $[x]$ and $\{x\}$ the integer and fractional part of the real number $x$.
Considering the $\beta$-$transformation$ $f_{\beta}:[0,1)\rightarrow [0,1)$ given by $$f_{\beta}(x)=\beta x\ (\mathrm{mod}\ 1)$$
For $\beta \notin \mathbb{N}$, let $b=[\beta]$ and for $\beta \in \mathbb{N}$, let $b=\beta-1$. Then we split the interval $[0,1)$ into $b+1$ partition as below

$$J_0=\left[0,\frac{1}{\beta}\right),\ J_1=\left[\frac{1}{\beta},\frac{2}{\beta}\right), \cdots,\ J_1=\left[\frac{b}{\beta},1\right).$$
 For $x\in[0,1)$, let $i(x,\beta)=(i_n(x,\beta))_1^{\infty}$ be the sequence given by $i_n(x,\beta)=j$ when $f^{n-1}x\in J_j$. We call $i(x,\beta)$ the greedy $\beta$-$expansion$ of $x$ and we have
$$ x=\sum_{n=1}^{\infty}i_n(x,\beta)\beta^{-n}.$$
We call $(\Sigma_{\beta},\sigma)$ $\beta$-shift, where $\sigma$ is the shift map, $\Sigma_{\beta}$ is the closure of $\{i(x,\beta)\}_{x\in [0,1)}$ in $\prod_{i=1}^{\infty}\{0,1,\cdots,b\}$.

From the discussion above, we can also define the greedy $\beta$-$expansion$ of 1, denoted by $i(1,\beta)$. Parry showed that the set of sequence with belong to $\Sigma_{\beta}$ can be characterised as
$$\omega \in \Sigma_{\beta} \Leftrightarrow f^k(\omega) \leq i(1,\beta)\ \mathrm{for\ all}\ k\geq 1,$$
where $\leq$ is taken in the lexicographic ordering \cite{P}. By the definition of $\Sigma_{\beta}$ above, $\Sigma_{\beta_1}\subsetneq\Sigma_{\beta_2}$ for $\beta_1<\beta_2$(\cite{P}).   
 Now we introduce some lemmas about $\beta$-shift, which indicate that $\beta$-shift has a certain degree of transitive property.

%%\begin{definition}
%%The expansion of a number $x\in[0,1]$ in base $\beta$ is a sequence of integers out of $\{0,1,\cdots,[\beta]\}$,
%%$$\{i_n\}_1^{\infty}=\{i_n(x,\beta)\}_1^{\infty}$$
%%defined by one of the following equivalent properties
%%\item[(1)] $i_1=[\beta x]$, $i_2=[\beta\{\beta x\}]$, $i_3=[\beta\{\beta \{\beta x\}\}]$, $\cdots$;
%%\item[(2)] if $T_{\beta}:[0,1]\rightarrow [0,1)$ is the transformation $T_{\beta}(x)=\beta x(mod\ 1)$ then
%%$$ i_n=[\beta T_{\beta}^{n-1}(x)],\ n\in\mathbb{Z}.$$
%%\end{definition}

%%For sequences $\{i_n(x_1,\beta)\}_1^{\infty}$ and $\{i_n(x_2,\beta)\}_1^{\infty}$ the lexicographical order is defined by %%$\{i_n(x_1,\beta)\}_1^{\infty}<\{i_n(x_2,\beta)\}_1^{\infty}$ if and only if for the least index $s$ with $i_s(x_1,\beta)\neq i_s(x_2,\beta)$, $i_s(x_1,\beta) < i_s(x_2,\beta)$.

\begin{Lem}\label{lemma-g-product}

For any $\mathbf{w} \in \mathcal L_{n}(\Sigma_{\beta})$, if there is a $j\in[1,n]$ such that $\mathbf{w}_j\ne 0$, then for any $\eta \in \Sigma_{\beta}$, $\mathbf{w}_1\cdots(\mathbf{w}_j-1)\cdots\mathbf{w}_n\eta \in \Sigma_{\beta}$.

\end{Lem}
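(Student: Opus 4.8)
The plan is to verify membership in $\Sigma_\beta$ directly from Parry's lexicographic criterion recalled above: writing $d:=i(1,\beta)=d_1d_2\cdots$ and using $\preceq$ for the lexicographic order, a sequence $\omega$ lies in $\Sigma_\beta$ exactly when $\sigma^k(\omega)\preceq d$ for every $k$. Denote the candidate by
$$\mathbf v:=\mathbf w_1\cdots\mathbf w_{j-1}(\mathbf w_j-1)\mathbf w_{j+1}\cdots\mathbf w_n\,\eta,$$
so that $v_i=\mathbf w_i$ for $i\le n,\ i\neq j$, while $v_j=\mathbf w_j-1$ (a legitimate symbol because $\mathbf w_j\ge 1$) and $v_{n+m}=\eta_m$ for $m\ge 1$. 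Everything reduces to checking $\sigma^k(\mathbf v)\preceq d$ for all $k$, and I would split the verification according to where the cut $k$ sits relative to the block $[1,n]$ and to the distinguished index $j$.

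The shifts $k\ge n$ are immediate: there $\sigma^k(\mathbf v)=\sigma^{k-n}(\eta)$, and $\eta\in\Sigma_\beta$ already yields $\sigma^{k-n}(\eta)\preceq d$. The shifts $0\le k<j$ are where the decrement does its work. For such $k$ the modified coordinate still lies inside the shifted block, so $\sigma^k(\mathbf v)$ and $\sigma^k(\mathbf w)$ agree on the positions $k+1,\dots,j-1$ and first differ at position $j$, where $\mathbf v$ carries $\mathbf w_j-1<\mathbf w_j$. Hence $v_{k+1}\cdots v_n\prec \mathbf w_{k+1}\cdots\mathbf w_n$ strictly, and combining this with admissibility of $\mathbf w$ (which gives $\mathbf w_{k+1}\cdots\mathbf w_n\preceq d_1\cdots d_{n-k}$) forces $v_{k+1}\cdots v_n\prec d_1\cdots d_{n-k}$. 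The strict drop occurs before the block ends, so the tail $\eta$ never enters the comparison and $\sigma^k(\mathbf v)\prec d$ unconditionally.

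The genuinely delicate regime is the band $j\le k\le n-1$, and this is the step I expect to be the main obstacle. Here the cut falls at or after the modified digit, so $\sigma^k(\mathbf v)=\mathbf w_{k+1}\cdots\mathbf w_n\,\eta$ is an \emph{unmodified} suffix of $\mathbf w$ followed directly by $\eta$; the decrement no longer lies in the compared string and can supply no slack of its own. Admissibility of $\mathbf w$ still gives $\mathbf w_{k+1}\cdots\mathbf w_n\preceq d_1\cdots d_{n-k}$, and whenever this inequality is strict the argument of the previous paragraph applies verbatim and $\eta$ is irrelevant. The crux is therefore the saturating sub-case $\mathbf w_{k+1}\cdots\mathbf w_n=d_1\cdots d_{n-k}$, in which $\sigma^k(\mathbf v)=d_1\cdots d_{n-k}\,\eta$ and the comparison with $d$ reduces exactly to $\eta\preceq\sigma^{n-k}(d)$. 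Establishing precisely this inequality is the hard part I anticipate: I would try to extract it by reading the admissibility of $\mathbf w$ at position $j$ together with the hypothesis $\mathbf w_j\neq 0$ and the self-admissibility $\sigma^m(d)\preceq d$ of the Parry sequence, so as to pin down the structure of a saturating suffix finely enough to control $\eta$. Handling this saturating case, and in particular the two boundary shifts $k=j$ and $k=n-1$ where the block abuts $\eta$, is the heart of the argument; once it is in place, assembling the three regimes gives $\sigma^k(\mathbf v)\preceq d$ for every $k$, hence $\mathbf v\in\Sigma_\beta$, which is the assertion of the lemma.
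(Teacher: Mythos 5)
You have correctly isolated where the difficulty lives, but the gap you flagged in the band $j\le k\le n-1$ is not merely hard: it is unbridgeable, because the lemma as printed (with an arbitrary interior index $j$) is false. Take $\beta$ the golden mean, so that $\Sigma_\beta$ is exactly the set of sequences over $\{0,1\}$ avoiding the word $11$. Then $\mathbf{w}=101\in\mathcal L_3(\Sigma_\beta)$, $j=1$ satisfies $\mathbf{w}_1\neq 0$, and $\eta=10^\infty\in\Sigma_\beta$, yet the candidate point is $001\,10^\infty$, which contains $11$ and so does not lie in $\Sigma_\beta$. In your language: the suffix $\mathbf{w}_3=1$ saturates the first digit of the (quasi-greedy) expansion of $1$, namely $(10)^\infty$, and the required inequality $\eta\preceq\sigma^1\bigl((10)^\infty\bigr)=0(10)^\infty$ simply fails for this $\eta$. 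No amount of exploitation of the hypothesis $\mathbf{w}_j\neq 0$ can rescue the saturating sub-case, because that hypothesis constrains nothing to the right of position $j$. (A second, minor inherited defect: when $i(1,\beta)$ is finite the Parry criterion must be phrased with the quasi-greedy expansion of $1$, not with $i(1,\beta)$ itself; the paper's formulation, which you adopted, already misidentifies $\Sigma_\beta$ in the golden-mean case.)

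The paper gives no argument at all here — it cites Proposition 5.1 of Pfister--Sullivan — and the statement actually available from that source, and the only case the paper ever uses (in the proof of its Lemma on connecting words, where the decremented letter is the \emph{last} letter of the prefix $\xi_1\cdots\xi_{k-1}(\xi_k+1)$), is the version with $j=n$: decrementing the final letter of an admissible word permits an arbitrary admissible continuation. Under that corrected hypothesis your own argument is already complete: the dangerous band $[j,n-1]$ is empty, and your second regime ($0\le k<j=n$) shows that every block shift exhibits a strict lexicographic drop at position $j-k\le n-k$, i.e.\ strictly inside the compared block, so the tail $\eta$ never participates in any comparison. So the verdict is: your decomposition and the two regimes you fully treated are sound and suffice to prove the true statement; the step you anticipated as ``the heart of the argument'' cannot be carried out because it is the locus of an error in the paper's formulation, not a missing idea of yours. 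The honest fix is to restate the lemma with $j=n$ (or, if one insists on interior $j$, to add the hypothesis that $\mathbf{w}_{k+1}\cdots\mathbf{w}_n$ is lexicographically strictly below the corresponding prefix of the quasi-greedy expansion of $1$ for every $k\in[j,n-1]$) and then run exactly your regime-two computation.
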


The proof is a easy part of \cite[Proposition 5.1]{PS}.

\begin{Lem}\label{lemma-connection}
For any $\omega \in \Sigma_{\beta}$ and any open set $U\subseteq \Sigma_{\beta}$, we can find an $\eta \in$ U and a $k\in \mathbb{N}$ such that $\sigma^k\eta=\omega$.
\end{Lem}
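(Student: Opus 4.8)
The plan is to exploit that open sets in a subshift are unions of cylinders, so it suffices to handle a single admissible cylinder word $\mathbf{v}=v_1\cdots v_m$ with $[\mathbf{v}]\subseteq U$ and to produce a point $\eta\in[\mathbf{v}]$ some iterate of which equals $\omega$. The natural candidate is a sequence of the form $\eta=v_1\cdots v_m\,c_1\cdots c_r\,\omega$ (the word $\mathbf{v}$, then a short connecting block, then a verbatim copy of $\omega$); for such an $\eta$ one has $\sigma^{m+r}\eta=\omega$ and $\eta\in[\mathbf{v}]\subseteq U$, so $k=m+r$ works. The only thing to arrange is admissibility of this concatenation, and this is exactly what Lemma \ref{lemma-g-product} is designed to deliver: it glues an arbitrary $\eta\in\Sigma_\beta$ after a word, at the cost of lowering one nonzero symbol by one.

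Concretely, I would first extend $\mathbf{v}$ to a longer admissible word $\mathbf{w}=v_1\cdots v_m s_1\cdots s_r\in\mathcal{L}(\Sigma_\beta)$ whose final symbol is nonzero, with $r\ge 1$. Applying Lemma \ref{lemma-g-product} to $\mathbf{w}$ at its last coordinate $j=m+r$, with the given $\omega$ playing the role of the appended sequence, yields
$$\eta:=v_1\cdots v_m s_1\cdots s_{r-1}(s_r-1)\,\omega\in\Sigma_\beta.$$
Since the modified coordinate $j=m+r$ lies strictly beyond position $m$, the first $m$ entries of $\eta$ are still $v_1\cdots v_m$, whence $\eta\in[\mathbf{v}]\subseteq U$; and by construction $\sigma^{m+r}\eta=\omega$. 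Thus $k=m+r$ and $\eta$ are as required, establishing that every point of $\Sigma_\beta$ has a preimage under some power of $\sigma$ inside every nonempty open set.

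The heart of the argument, and the step I expect to be the main obstacle, is producing the admissible extension $\mathbf{w}$ of $\mathbf{v}$ that carries a nonzero symbol strictly to the right of the cylinder window; equivalently, one must know $[\mathbf{v}]\neq\{\mathbf{v}0^\infty\}$, i.e. that $\Sigma_\beta$ has no isolated points. Decreasing a nonzero symbol inside $\mathbf{v}$ itself is useless, since that would move us out of $[\mathbf{v}]$. I would obtain such a $\mathbf{w}$ explicitly by taking $\mathbf{w}=v_1\cdots v_m 0^t 1$ and verifying admissibility of $v_1\cdots v_m 0^t 1 0^\infty$ through the Parry criterion $\sigma^k(\cdot)\le i(1,\beta)$ (lexicographically): the only shifts that could violate it are those $k\le m$ for which $v_{k+1}\cdots v_m$ already equals the prefix $i_1\cdots i_{m-k}$ of $i(1,\beta)$, and for those the comparison reduces to $0^t 1 0^\infty\le i_{m-k+1}i_{m-k+2}\cdots$. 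Choosing $t$ larger than every (finite) run of zeros occurring in $i(1,\beta)$ just after these finitely many critical positions makes each comparison strict. The subtlety to handle carefully is that this uses that the comparison sequence $i(1,\beta)$ attached to the \emph{closed} $\beta$-shift is not eventually zero (for simple Parry $\beta$ one must use the eventually periodic quasi-greedy expansion rather than the finite greedy one), which is precisely what guarantees that every run of zeros is finite, so that a suitable $t$ exists.
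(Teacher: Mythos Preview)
Your proposal is correct and follows the same overall strategy as the paper: both arguments reduce to producing an admissible word that (i) has a nonzero symbol strictly to the right of a cylinder window contained in $U$, and then (ii) invoke Lemma~\ref{lemma-g-product} at that coordinate to glue $\omega$. The only difference is in how step (i) is carried out. The paper picks a point $\xi\in U$ with $\xi<i(1,\beta)$, chooses $k$ large so that the increment $\xi'=\xi_1\cdots(\xi_k+1)\xi_{k+1}\cdots$ stays below $i(1,\beta)$ and inside $U$, and then applies Lemma~\ref{lemma-g-product} to the word $\xi_1\cdots(\xi_k+1)$ at position $k$; the decrement restores the original prefix $\xi_1\cdots\xi_k$, so the glued point lies in $U$. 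You instead take a cylinder word $\mathbf v$ with $[\mathbf v]\subseteq U$ and explicitly extend it to $\mathbf v\,0^t1$, verifying admissibility through the Parry criterion. The paper's route is shorter because the strict inequality $\xi<i(1,\beta)$ immediately supplies a coordinate that can be bumped up (and then brought back down), avoiding the case analysis; your route is more self-contained, makes the ``no isolated points'' fact explicit, and correctly flags the simple-Parry subtlety (use the quasi-greedy expansion so that no suffix of the comparison sequence is eventually zero). Either way the key input is Lemma~\ref{lemma-g-product}, and the conclusions are identical.
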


\begin{proof}
$U$ is open, so we can find a point $\xi=\xi_1\xi_2\cdots \in U$ such that $\xi < i(1,\beta)$. So we can find a $k\in \mathbb{N}$ large enough, such that $\xi^{'}\triangleq\xi_1\xi_2\cdots(\xi_k+1)\xi_{k+1}\xi_{k+2}\cdots<i(1,\beta)$ and $\xi^{'}\in U$. Then by Lemma \ref{lemma-g-product}, we conclude that $\eta \triangleq\xi_1\xi_2\cdots\xi_k\omega\in U$ and $\sigma^k\eta$ = $\omega$.
\end{proof}

\begin{Lem}\label{lemma-thompson}

For $\beta$-shift, there exists an increasing sequence $\{\Sigma_{\beta}^{n}\}$ of compact $\sigma$-invariant subsets of $\Sigma_{\beta}$ with the following properties:
\item[(a)] Each $\{\Sigma_{\beta}^{n}\}$ is a sofic shift and has specification property
\item[(b)] For any $\mu\in \mathcal M_{f}(\Sigma_{\beta})$, and any neighborhood $U$ of $\mu$ in $\mathcal M_{f}(\Sigma_{\beta})$, there exist $n\geq 1$ and $\mu^{'}\in \mathcal M_{f}^{e}(\Sigma_{\beta}^n)\bigcap U$.

\end{Lem}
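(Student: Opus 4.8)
The strategy is to approximate $\beta$ from below by parameters whose one-expansions are eventually periodic, so that the corresponding $\beta$-shifts are sofic and carry the specification property, and then to prove that these subsystems already account for every invariant measure of $\Sigma_\beta$. Throughout I would lean on Parry's lexicographic characterization quoted above, on the monotonicity $\Sigma_{\beta_1}\subsetneq\Sigma_{\beta_2}$ for $\beta_1<\beta_2$ (\cite{P}), and crucially on the ``decrease a symbol and continue freely'' mechanism of Lemma \ref{lemma-g-product}.

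First I would build the sequence. Write $i(1,\beta)=(c_1c_2\cdots)$. It is classical $\beta$-expansion theory (\cite{P}) that $\beta\mapsto i(1,\beta)$ is strictly increasing, that its image is exactly the admissible sequences (those $d$ with $\sigma^k d\preceq d$ for all $k\ge1$), and that parameters with finite (hence eventually periodic) one-expansion are dense. Truncating $i(1,\beta)$ at a position $m$ with $c_m\neq0$ and lowering that last symbol yields an admissible finite word, so one obtains a strictly increasing sequence $\beta_n\uparrow\beta$ with each $i(1,\beta_n)$ eventually periodic and with agreement length $L_n$ between $i(1,\beta_n)$ and $i(1,\beta)$ tending to infinity. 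I would then set $\Sigma_\beta^n:=\Sigma_{\beta_n}$; the quoted monotonicity makes $\{\Sigma_\beta^n\}$ an increasing sequence of compact $\sigma$-invariant subsets of $\Sigma_\beta$.

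For property (a): an eventually periodic one-expansion forces $\Sigma_\beta^n$ to be a sofic shift (indeed a subshift of finite type when the expansion is finite). Since $\beta$-shifts are topologically mixing and specification passes from a mixing subshift of finite type to its sofic factors, each $\Sigma_\beta^n$ has the specification property, giving (a). For property (b) I would first reduce: because each $\Sigma_\beta^n$ has specification, Proposition \ref{specification-entropy-dense} makes $\mathcal M_f^e(\Sigma_\beta^n)$ dense in $\mathcal M_f(\Sigma_\beta^n)$, so it suffices to show that $\bigcup_n\mathcal M_f(\Sigma_\beta^n)$ is weak$^*$-dense in $\mathcal M_f(\Sigma_\beta)$. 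By ergodic decomposition I may take $\mu$ ergodic and pick a generic point $x\in\Sigma_\beta$ whose empirical measures converge to $\mu$. A coordinate $k$ of $x$ is an obstruction to membership in $\Sigma_\beta^n$ only when $\sigma^k x$ shares a prefix of length at least $L_n$ with $i(1,\beta)$; by Birkhoff's theorem such ``bad'' coordinates have frequency at most $\mu([c_1\cdots c_{L_n}])$, which I can force below any $\delta>0$ by taking $n$ large. At each bad coordinate I would apply Lemma \ref{lemma-g-product} to lower a symbol, steering the orbit back into $\Sigma_{\beta_n}$ while touching only a $\delta$-fraction of coordinates; by Lemma \ref{measure distance} the resulting point of $\Sigma_\beta^n$ has empirical measures within any prescribed neighborhood $U$ of $\mu$, and a weak$^*$ limit lands in $\mathcal M_f(\Sigma_\beta^n)\cap U$.

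The main obstacle I anticipate is precisely the surgery estimate in (b): one must verify that the bad coordinates genuinely have frequency controlled by the shrinking cylinder measure $\mu([c_1\cdots c_{L_n}])$, and that the symbol-lowering of Lemma \ref{lemma-g-product} can be carried out at a sparse, well-separated set of positions so that the perturbation of the empirical (and hence limiting) measure is uniformly small. A secondary point requiring care is confirming that the chosen approximants $\Sigma_\beta^n$ are mixing, so that soficity upgrades to specification; this is where I would invoke the classical topological mixing of $\beta$-shifts together with the structure of their eventually periodic expansions.
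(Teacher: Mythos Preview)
The paper does not actually prove this lemma: it simply refers the reader to \cite{CTY} (Climenhaga--Thompson--Yamamoto) for the details. So there is no in-paper argument to compare against; what you have written is a genuine proof sketch rather than a paraphrase.

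Your plan is the standard one and is essentially what underlies the cited result: approximate $\beta$ from below by $\beta_n\uparrow\beta$ with eventually periodic one-expansions, set $\Sigma_\beta^n=\Sigma_{\beta_n}$, and use Parry monotonicity for the inclusion. Part (a) is then immediate from classical facts you cite (eventually periodic expansion $\Rightarrow$ sofic; every $\beta$-shift is topologically mixing; mixing sofic shifts have specification). Your surgery for (b) via Lemma~\ref{lemma-g-product} also goes through: at a ``bad'' position $k$ one has $x_k=c_1\ge1$, and lowering it makes $\sigma^k x'$ start with $c_1-1<c_1$, hence $\sigma^k x'<i(1,\beta_n)$; at a non-bad $k$ the first disagreement of $\sigma^k x$ with $i(1,\beta)$ occurs within $L_n$ symbols, and any earlier modification only pushes the first disagreement forward and downward. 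So the modified point lands in $\Sigma_{\beta_n}$, and Lemma~\ref{measure distance} controls the empirical measures as you say.

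If you want to avoid the bookkeeping you flag as the main obstacle, there is a shorter route to (b): every periodic point $\omega\in\Sigma_\beta$ satisfies $\sigma^k\omega<i(1,\beta)$ strictly and uniformly over its finite orbit, hence lies in $\Sigma_{\beta_n}$ once $L_n$ exceeds the first index of disagreement; thus every periodic measure already belongs to some $\mathcal M_f(\Sigma_\beta^n)$. Density of periodic measures in $\mathcal M_f(\Sigma_\beta)$ (available for $\beta$-shifts via their non-uniform specification structure, see \cite{PS,CTY}) together with Proposition~\ref{specification-entropy-dense} on each $\Sigma_\beta^n$ then gives (b) without any coordinate surgery.
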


Lemma \ref{lemma-thompson} is a main application in \cite{CTY}. Reader can refer to \cite{CTY} for the details of the proof. The lemma above shows us that to figure out the irregular set for the whole space($\Sigma_{\beta}$), it is sufficient to study the irregular set for certain asymptotic `horseshoe-like'($\Sigma_{\beta}^n$) of the whole space.

%\end{remark}

%\begin{definition}
%We use $X^{\beta}$ to represent the closure of the set of all $\beta$-expansions of $x\in[0,1]$.
%\end{definition}
%
%\begin{definition}
%The language $\mathcal{L}$ of the $\beta$-shift is the set of all words $\omega_1^k = (\omega_1,\cdots,\omega_k)$ with $\omega \in X^{\beta}$ and $k \in \mathbb{N}$.The set of all words of length k is denoted by $\mathcal{L}_k$
%\end{definition}
%
%\begin{Prop}\label{prop3}
%$\forall \omega_1^k \in \mathcal{L}_k$, if there is a $j\in[1,k]$ such that $\omega_j\ne 0$ ,then $\forall \eta \in X^\beta$ ,$(\omega_1,\cdots,\omega_j-1,\cdots,\omega_k,\eta) \in X^\beta$.
%\end{Prop}
%
%Readers can refer to [\cite{PS}, proposition 5.1] for details of the proof.
%
%
%\begin{lemma}\label{lemma-s}
%For any $\omega \in X^{\beta}$ and open set $U\subseteq X^{\beta}$, we can find a $\eta \in$ U such that there is a $k\in \mathbb{Z}_+$ and $\sigma^k\eta=\omega$.
%\end{lemma}
%
%
%\begin{proof}
%since $U$ is open, so we can find a point $\xi=(\xi_1,\xi_2,\cdots) \in$ U which is not the $\beta$-expansion of 1 which we label with $(c_1,c_2,\cdots)$. So we can find a large enough k, such that$(\xi_1,\xi_2,\cdots,\xi_k+1,\xi_{k+1},\xi_{k+2},\cdots)<(c_1,c_2,\cdots)$ which respect to the lexicographical order and it is still in U.Then we use the proposition \ref{prop3}, we conclude that $\eta = (\xi_1,\xi_2,\cdots,\xi_k,\omega)\in$ U and $\sigma^k\eta$ = $\omega$
%\end{proof}

\begin{Thm}\label{Th-beta1}
For any $\beta>1$ and $(\Sigma_{\beta},\sigma)$, suppose $\varphi$ is a continuous function on $\Sigma_{\beta}$, then
\begin{description}
\item[(a)] $\{QW_1, QW_2, QW_3, QW_4, QW_5, BR_1, BR_2, BR_3, BR_4, BR_5\}$ has  uncountable DC1-scrambled gap with respect to $ Rec(\sigma)$;

\item[(b)] If $I_{\varphi}(\sigma)\neq \emptyset$, then $\{QW_1, QW_2, QW_3, QW_4, QW_5, BR_1, BR_2, BR_3, BR_4, BR_5\}$ has  uncountable DC1-scrambled gap with respect to $ Rec(\sigma)\cap I_{\varphi}(\sigma)$;

\item[(c)] If $Int(L_\varphi)\neq\emptyset$, then for any $a\in Int(L_\varphi)$, $\{QW_1, QW_2, QW_3, QW_4, QW_5, BR_1, 
 BR_2, BR_3, BR_4,\\ BR_5\}$ has  uncountable DC1-scrambled gap with respect to $ Rec(\sigma)\cap R_{\varphi}(a)$.

 \item[(d)]    $\{QW_1, QW_2, QW_3, QW_4, QW_5, BR_1, 
 BR_2, BR_3, BR_4, BR_5\}$ has  uncountable DC1-scrambled gap with respect to $ Rec(\sigma)\cap R_{\varphi}$.

\end{description}
\end{Thm}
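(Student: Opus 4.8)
The plan is to transfer the entire argument of Theorem \ref{Th-best-refinedversion} from the global setting to the exhausting subsystems $\{\Sigma_\beta^n\}$ furnished by Lemma \ref{lemma-thompson}. Although $(\Sigma_\beta,\sigma)$ need not have specification, each $\Sigma_\beta^n$ is a sofic shift that \emph{does} have it, and the empirical measures, the sets $V_f(x)$, $S_\mu$, $C_x$, and the Banach densities governing the recurrence levels $QW_i,BR_i$ are all intrinsic to the orbit and its closure. Hence, if a point $x$ lies in an invariant subsystem $\Sigma_\beta^N\subseteq\Sigma_\beta$, every one of these quantities is unchanged whether computed in $\Sigma_\beta^N$ or in $\Sigma_\beta$; likewise the conditions defining $I_\varphi(\sigma)$, $R_\varphi(a)$ and $R_\varphi$ depend only on the Birkhoff averages of $\varphi$ along the orbit. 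Consequently it suffices to run the proof of Theorem \ref{Th-best-refinedversion} verbatim inside one well-chosen $\Sigma_\beta^N$ and then reinterpret the output in $\Sigma_\beta$.

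First I would fix $N$ large. Since $\beta$-shifts carry positive entropy $\log\beta$, Lemma \ref{lemma-thompson} guarantees that for $N$ large $\Sigma_\beta^N$ is not uniquely ergodic, so item (a) needs no extra hypothesis. Because $\Sigma_\beta^N$ has specification, Proposition \ref{distal-dense} supplies ergodic measures with nondegenerate minimal support whose generic sets carry distal pairs, Proposition \ref{specification-full-support-dense} supplies a measure of full support in $\Sigma_\beta^N$, and \cite[Lemma 3.4]{HTW} supplies a countable family $\mu_1,\mu_2,\dots$ of such ergodic measures with pairwise disjoint supports and $\overline{\bigcup_i S_{\mu_i}}=\Sigma_\beta^N$. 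These are exactly the ingredients used to build the sets $K_1,\dots,K_9\subseteq\mathcal M_f(\Sigma_\beta^N)$ in the proof of Theorem \ref{Th-best-refinedversion}. For items (b) and (c) I would additionally use the density of $\bigcup_n\mathcal M_f^e(\Sigma_\beta^n)$ in $\mathcal M_f(\Sigma_\beta)$ (Lemma \ref{lemma-thompson}) together with the continuity of $\nu\mapsto\int\varphi\,d\nu$: when $I_\varphi(\sigma)\neq\emptyset$ (resp. $a\in Int(L_\varphi)$), the range of $\int\varphi\,d\nu$ over $\mathcal M_f(\Sigma_\beta^N)$ is a nondegenerate subinterval of $L_\varphi$ increasing to $L_\varphi$, so for $N$ large one can choose the $\mu_i$ (and, in (c), the measures $\nu_i,\rho_1,\rho_2,\mu^*,\mu^{**}$) inside $\Sigma_\beta^N$ realizing the required integral inequalities, with $a\in Int$ of the restricted range.

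With $K_1,\dots,K_9$ in place I would apply Theorem \ref{maintheorem-DC1inSaturated} inside $\Sigma_\beta^N$ to each $K_i$ (each $K_i$ contains an endpoint $\mu_j$ whose generic set has a distal pair, so the hypothesis holds with $\mu_1=\mu_2$): for any prescribed nonempty open set this yields an uncountable DC1-scrambled set $S_i\subseteq G_{K_i}\cap Trans(\Sigma_\beta^N)$. A point transitive in $\Sigma_\beta^N$ satisfies $\sigma$-$\omega_\sigma(x)=\Sigma_\beta^N\ni x$, hence is recurrent and lies in $Rec(\sigma)$; this is precisely why the conclusion is phrased with respect to $Rec(\sigma)$ rather than $Trans$. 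The placements $G_{K_i}\subseteq QW_{i+1}\setminus QW_i$ for $i\in\{1,2,3,4\}$, $G_{K_5}\subseteq BR_1\setminus QW_5$, and $G_{K_i}\subseteq BR_{i-4}\setminus BR_{i-5}$ for $i\in\{6,7,8,9\}$ are intrinsic and go through exactly as before (using Propositions \ref{Trans-BR} and \ref{prop2}), as do the inclusions $G_{K_i}\subseteq I_\varphi(\sigma)$ and $G_{K_i}\subseteq R_\varphi(a)$. For item (d), if $Int(L_\varphi)\neq\emptyset$ I would invoke item (c) with a single $a\in Int(L_\varphi)$ since $R_\varphi(a)\subseteq R_\varphi$; if $Int(L_\varphi)=\emptyset$ then every invariant measure assigns $\int\varphi\,d\nu$ the same value, so every Birkhoff average of $\varphi$ converges and $R_\varphi=\Sigma_\beta$, reducing (d) to (a).

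The main obstacle is precisely the failure of global specification, which forbids a direct appeal to Theorem \ref{maintheorem-DC1inSaturated} on $\Sigma_\beta$. Passing to a single specification subsystem $\Sigma_\beta^N$ resolves this but forces two points I would have to verify with care. First, that one fixed $N$ can simultaneously host \emph{all} the required measures — countably many ergodic measures with disjoint nondegenerate minimal supports, a full-support-in-$\Sigma_\beta^N$ measure, and the correct $\varphi$-integral constraints; this is where the density statement of Lemma \ref{lemma-thompson} and \cite[Lemma 3.4]{HTW} do the real work. Second, that each recurrence level and each multifractal condition is genuinely intrinsic to the orbit, so that membership established inside $\Sigma_\beta^N$ persists in $\Sigma_\beta$, the only genuine loss being transitivity, which is correctly downgraded to recurrence in the statement.
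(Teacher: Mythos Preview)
Your proposal is correct and follows essentially the same route as the paper: pass to a compact invariant subsystem with specification, run the proof of Theorem~\ref{Th-best-refinedversion} there, and use that the recurrence levels $QW_i,BR_i$ and the multifractal conditions are intrinsic to the orbit so that the constructed DC1-scrambled sets land in $Rec(\sigma)$ of the ambient $\Sigma_\beta$ (transitivity in the subsystem being downgraded to recurrence in $\Sigma_\beta$). The only cosmetic difference is that for item~(a) the paper chooses as its specification subsystem a sub-$\beta$-shift $\Sigma_\alpha\subsetneq\Sigma_\beta$ with $\alpha<\beta$ taken from the dense set of parameters with specification (via \cite{Sm,BJ}), rather than the sofic approximant $\Sigma_\beta^N$ of Lemma~\ref{lemma-thompson}; for items~(b), (c), (d) the two arguments coincide.
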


\begin{proof}
\textbf{(a)}: Refer to \cite{Sm}, we have $\{\beta\in (1,+\infty)\ |\ (\Sigma_{\beta},\sigma)\text{ has  specification  property} \}$ is dense in $(1,+\infty)$. Then for any $\beta>1$, we can find an $\alpha<\beta$ such that $(\Sigma_{\alpha},\sigma)$ has specification property. By Theorem \ref{Th-best-refinedversion}, for $(\Sigma_{\alpha},\sigma)$, we have $\{QW_1, QW_2, QW_3, QW_4, QW_5, BR_1,\\
 BR_2, BR_3, BR_4, BR_5\}$ has  uncountable DC1-scrambled gap with respect to $Trans_{\sigma|_{\Sigma_\alpha}}$. Note that $\Sigma_{\alpha}\subsetneq\Sigma_{\beta}$, so the transitive points of $\Sigma_{\alpha}$ must be the recurrent points of $\Sigma_{\beta}$. Moreover, it is easy to see that for any $S_a\in \mathrm{GS}\{QW_1, QW_2, QW_3, QW_4, QW_5, BR_1, BR_2,\\ BR_3, BR_4, BR_5\}$ for system $(\Sigma_{\alpha},\sigma)$ is a subset of some $S_b\in \mathrm{GS}\{QW_1, QW_2, QW_3, QW_4,\\ QW_5, BR_1, BR_2, BR_3,
  BR_4, BR_5\}$ for system $(\Sigma_{\beta},\sigma)$. Then item\textbf{(a)} has been proved.

\textbf{(b)}: If $I_{\varphi}(\sigma)\neq \emptyset$, there exist $\lambda_1,\lambda_2\in \mathcal M_\sigma(\Sigma_{\beta})$ such that $\int\varphi d\lambda_1\neq\int\varphi d\lambda_2$. By Lemma \ref{lemma-thompson}, we have $(\Sigma_{\beta}^n,\sigma)$ which has specification property and $\mu_1,\mu_2\in \mathcal M_\sigma(\Sigma_{\beta}^n)$ such that $\int\varphi d\mu_1\neq\int\varphi d\mu_2$. By the proof of Theorem  \ref{Th-best-refinedversion}, for $(\Sigma_{\beta}^n,\sigma)$, we have $\{QW_1, QW_2, QW_3, QW_4,\\
 QW_5, BR_1, BR_2, BR_3, BR_4, BR_5\}$ has  uncountable DC1-scrambled gap with respect to $Trans_{\sigma|_{\Sigma_{\beta}^n}}\cap I_{\varphi}(\sigma)$. Similarly, item\textbf{(b)} has been proved.

\textbf{(c)}: If $Int(L_\varphi)\neq\emptyset$, then for any $a\in Int(L_\varphi)$, there exist $\lambda_1,\lambda_2$ such that $\int\varphi d\lambda_1<a<\int\varphi d\mu_2$. By Lemma \ref{lemma-thompson}, we have $(\Sigma_{\beta}^n,\sigma)$ which has specification property and $\mu_1,\mu_2\in \mathcal M_f(\Sigma_{\beta}^n)$ such that $\int\varphi d\mu_1<a<\int\varphi d\mu_2$. By the proof of Theorem  \ref{Th-best-refinedversion}, for $(\Sigma_{\beta}^n,\sigma)$, we have $\{QW_1, QW_2, QW_3, QW_4, QW_5, BR_1, BR_2, \\ BR_3, BR_4, BR_5\}$ has  uncountable DC1-scrambled gap with respect to $Trans_{\sigma|_{\Sigma_{\beta}^n}}\cap R_{\varphi}(a)$. Similarly, item\textbf{(c)} has been proved.

\textbf{(d)}: If $Int(L_\varphi)\neq\emptyset$, item\textbf{(d)} is from item\textbf{(c)}. Otherwise, $R_\varphi=X$ so that item\textbf{(d)} is from item\textbf{(a)}.
\end{proof}

\section{Comments and Questions}

\subsection{Weakly almost periodic points} 
The reason why we can't analyse whether there is an   uncountable DC1-scrambled set in $W$ by our method is that we didn't find a measure $\mu$ with full support and $G_\mu$ has distal pair. For a point $x\in W\cap Trans$, we can observe that $x$ must be a element of the generic point of a measure with full support. But   Theorem \ref{maintheorem-DC1inSaturated} don't cover this situation. 

\begin{Thm}\label{Th-Weaklyperiodic}
Suppose that $(X,f)$ has specification property. If 
%there are two invariant measures $\mu,\nu$ with full support (admitting $\mu=\nu$) and   %$x,y\in Trans$ and 
%$  G_\mu,   G_\nu,$ both have distal pair, 
 for any invariant measure  $\mu$ with full support,    %$x,y\in Trans$ and 
$  G_\mu $   has distal pair, then \\ 
(1) there is an   uncountable DC1-scrambled set $S\subseteq W\cap Trans.$
\\
(2) If $\varphi$ is a continuous function on $X$ and $I_{\varphi}(f)\neq\emptyset$. Then there is an   uncountable DC1-scrambled set $S\subseteq W\cap Trans\cap I_{\varphi}(f).$\\
(3) If $\varphi$ is a continuous function on $X$ and $Int(L_\varphi)\neq\emptyset$. Then for any $a\in L_\varphi$, there is an   uncountable DC1-scrambled set $S\subseteq W\cap Trans\cap R_{\varphi}(a)$. 
\\
(4) For  any continuous function $\varphi$   on $X$,  there is an   uncountable DC1-scrambled set $S\subseteq W\cap Trans\cap R_{\varphi}$.
\end{Thm}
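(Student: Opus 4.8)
The plan is to reduce all four parts to Theorem~\ref{maintheorem-DC1inSaturated}, applied to a saturated set $G_K$ in which \emph{every} measure of $K$ has full support; the standing hypothesis that $G_\mu$ has a distal pair for each full-support $\mu$ is exactly what licenses us to invoke that theorem on such measures. The organizing observation is the following: if $x\in G_K\cap Trans$ and $S_\kappa=X$ for every $\kappa\in K$, then $x\in W$. Indeed, $V_f(x)=K$, so for every $\mu\in V_f(x)$ we have $S_\mu=X=\omega_f(x)$, the last equality by transitivity; since $Trans\subseteq Rec$, Proposition~\ref{prop1}(c) yields $x\in W$. Thus the entire proof amounts to choosing $K$ so that its generic points additionally land in the desired multifractal set.

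For (1), fix by Proposition~\ref{specification-full-support-dense} a measure $\mu$ with full support; by hypothesis $G_\mu$ has a distal pair, so writing $\mu=1\cdot\mu+0\cdot\mu$ I apply Theorem~\ref{maintheorem-DC1inSaturated} with $K=\{\mu\}$ and any fixed nonempty open $U$. This produces an uncountable DC1-scrambled set $S\subseteq G_\mu\cap U\cap Trans$, and since $S_\mu=X$ the observation above gives $S\subseteq W\cap Trans$.

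For (2), note $I_\varphi(f)\neq\emptyset$ forces invariant $\lambda_1,\lambda_2$ with $\int\varphi\,d\lambda_1\neq\int\varphi\,d\lambda_2$. Using density of full-support measures together with continuity of $\nu\mapsto\int\varphi\,d\nu$, I would pick full-support $\mu_1,\mu_2$ with $\int\varphi\,d\mu_1\neq\int\varphi\,d\mu_2$ (e.g.\ by mixing each $\lambda_i$ with a small amount of a fixed full-support measure). Set $K=\mathrm{cov}\{\mu_1,\mu_2\}$: it is connected and compact, and each $\theta\mu_1+(1-\theta)\mu_2$ has support $X$. Since $\mu_1\in K$ has full support, Theorem~\ref{maintheorem-DC1inSaturated} gives an uncountable DC1-scrambled $S\subseteq G_K\cap U\cap Trans$; for $x\in S$ we have $V_f(x)=K$, so the Birkhoff averages of $\varphi$ accumulate on the nondegenerate interval $\{\int\varphi\,d\kappa:\kappa\in K\}$ and hence diverge, giving $x\in I_\varphi(f)$, while $x\in W$ by the observation. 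For (3) with $a\in Int(L_\varphi)$, choose full-support $\mu_1,\mu_2$ with $\int\varphi\,d\mu_1<a<\int\varphi\,d\mu_2$ as above and, by the intermediate value theorem, $\theta\in(0,1)$ with $\nu=\theta\mu_1+(1-\theta)\mu_2$ satisfying $\int\varphi\,d\nu=a$; then $\nu$ has full support, and Theorem~\ref{maintheorem-DC1inSaturated} with $K=\{\nu\}$ yields an uncountable DC1-scrambled $S\subseteq G_\nu\cap U\cap Trans$, which lies in $W\cap Trans$ by the observation and in $R_\varphi(a)$ because $\tfrac1n\sum_{i=0}^{n-1}\varphi(f^ix)\to\int\varphi\,d\nu=a$. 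Finally (4) splits: if $Int(L_\varphi)\neq\emptyset$ apply (3) at a single interior $a$, since $R_\varphi(a)\subseteq R_\varphi$; if $Int(L_\varphi)=\emptyset$ then $\int\varphi\,d\mu$ is constant on $\mathcal M_f(X)$, so every Birkhoff average converges and $R_\varphi=X$, and (1) already concludes.

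The routine but essential verifications are that full-support measures with prescribed $\varphi$-integral behaviour exist (density plus a convexity/intermediate-value step) and that a convex combination of full-support measures again has full support; I expect no genuine obstacle here. The one delicate point worth flagging is the interior condition in (3): at a boundary value $a\in L_\varphi\setminus Int(L_\varphi)$ the only measures with $\int\varphi\,d\nu=a$ may be extremal and fail to have full support, so the reduction to $W$ can break down. This matches the phenomenon already recorded in the Remark following Theorem~\ref{Th-best-refinedversion}, and is precisely why the argument must be run for interior $a$.
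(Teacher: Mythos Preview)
Your proposal is correct and follows essentially the same route as the paper: both reduce each item to Theorem~\ref{maintheorem-DC1inSaturated} applied to a set $K$ consisting entirely of full-support measures, then invoke Proposition~\ref{prop1} to land in $W$; the choices of $K$ (a singleton for (1) and (3), a segment $\mathrm{cov}\{\mu,\mu'\}$ for (2), and the case split for (4)) are identical. Your explicit flag on the boundary case $a\in L_\varphi\setminus Int(L_\varphi)$ in (3) is apt---the paper's own proof, like yours, actually constructs the needed full-support $\omega$ with $\int\varphi\,d\omega=a$ only via an intermediate-value step that is unproblematic for interior $a$ but is not justified at the endpoints.
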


\begin{Rem} The set of  points with Case (1) restricted on recurrent set coincides with the set of $W\setminus AP.$ For systems with specification, note that $W\cap Trans \subseteq W\setminus AP$ so that above result can be also stated for the set of  points with Case (1) restricted on recurrent set or $W\setminus AP.$

\end{Rem}

\begin{Rem} For a transitive system $(X,f)$ without periodic points with period $m,$ it is easy to check for any $x\in Trans$, $(x,f^mx)$ must be a distal pair. This implies that for any invariant measure $\mu$ (not necessarily with full support),  
$  G_\mu\cap Trans $   has distal pair.  So Theorem \ref{Th-Weaklyperiodic} are suitable for systems with specification but without periodic points with period $m$ for some $m.$ In particular, it apllies in mixing subshifts of finite type without periodic points with period $m$ for some $m.$  For example it can be a subshift of finite type defined by a graph
with two distinct cycles of length $m+1$ and $m+2$ starting from the same vertex. For such dynamical systems, Theorem \ref{maintheorem-DC1inSaturated} holds for any nonempty compact connected set $K,$ since $G_\mu$ has distal pair for any $\mu$ in $K.$
\end{Rem}

{\bf Proof.}  Let $\mu$ be an invariant measure with full support.

(1) Take $K= \{\mu\}$. Then one can use Proposition \ref{prop1} and Theorem \ref{maintheorem-DC1inSaturated} to give the proof. 

(2) By Proposition \ref{specification-full-support-dense}, one can choose an invariant measure $\mu'$ with full support such that $\int \varphi d\mu \neq \int \varphi d\mu'.$
 Take $K=\text{cov}\{\mu,\mu'\}$. Then one can use Proposition \ref{prop1} and Theorem \ref{maintheorem-DC1inSaturated} to give the proof.

 (3) If $\int \varphi d\mu=a$, take $\omega=\mu.$ Otherwise, by Proposition \ref{specification-full-support-dense}, one can choose an invariant measure $\mu'$ with full support such that $\int \varphi d\mu'<a< \int \varphi d\mu$ or $\int \varphi d\mu<a< \int \varphi d\mu'.$   Take suitable $\theta\in(0,1)$ such that $\omega=\theta\mu+(1-\theta)\mu'$ such that $\int \varphi d\omega=a. $ In this case take $K=\{\omega\}$. Then one can use Proposition \ref{prop1} and Theorem \ref{maintheorem-DC1inSaturated} to give the proof.

(4) If $Int(L_\varphi)\neq\emptyset$, item  {(4)} is from item  {(3)}. Otherwise, $R_\varphi=X$ so that item  {(4)} is from item  {(1)}.
\qed

\subsection{Minimal points}\label{section-minimal}

For minimal points, it is still unknown whether DC1 appear   but we remark that DC-2 appear.
\begin{Thm}\label{Th-Minimal}
Suppose that $(X,f)$ has specification property (or almost specification, or shadowing property with positive entropy). Then   there is an uncountable  DC-2 scrambled set $S\subseteq   AP(f).$ 
 \end{Thm}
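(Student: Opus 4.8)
The plan is to reduce to the implication ``positive topological entropy $\Rightarrow$ DC2'' and to realize that entropy inside a minimal set. The key observation is that every point of a minimal subsystem $M\subseteq X$ is uniformly recurrent, hence almost periodic, so $M\subseteq AP(f)$. Moreover the metric $d$ simply restricts to $M$, and the two distribution functions of a pair depend only on the numbers $d(f^ix,f^iy)$; hence a pair that is DC2-scrambled in $(M,f|_M)$ is automatically DC2-scrambled in $(X,f)$. Consequently it suffices to produce one \emph{minimal subsystem $M\subseteq X$ with $h_{top}(f|_M)>0$}, and then apply Downarowicz's theorem that positive topological entropy forces an uncountable DC2-scrambled set: applied to $(M,f|_M)$ it yields an uncountable DC2-scrambled $S\subseteq M\subseteq AP(f)$, which is exactly the desired conclusion.

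It remains to supply such an $M$ under each of the three hypotheses. Since $X$ is nondegenerate, specification and almost specification both force $h_{top}(X)>0$ (one glues, with bounded gaps, $2^n$ independently chosen blocks each near one of two fixed points at positive distance, producing exponentially many distinguishable orbit segments), while in the third case positive entropy is assumed outright; thus in every case the system carries positive entropy together with a strong gluing or shadowing mechanism. In the symbolic model this mechanism gives a mixing subshift of finite type inside the system, and Krieger's embedding theorem then produces a minimal subshift of positive but suitably small entropy: minimal subshifts have no periodic orbits, so the only obstruction to embedding disappears once the target entropy is chosen below the available entropy. For general (non-symbolic) systems the same conclusion — existence of a minimal subsystem of positive entropy under (almost) specification, and under shadowing with positive entropy — is part of the established theory relating shadowing/specification, entropy and minimal sets, which I would invoke rather than reprove.

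The main obstacle is exactly this last step, the manufacture of the minimal positive-entropy subsystem. In particular one cannot reuse the saturated-set construction of Theorem~\ref{maintheorem-DC1inSaturated}: that construction outputs \emph{transitive} points of $X$, and a transitive point is almost periodic only when $X$ itself is minimal, whereas a nondegenerate minimal system never has specification. So the $AP$ case is genuinely transverse to the arguments used for the irregular and level sets and must be routed through an internal minimal piece rather than through $Trans$. Finally, I would record why only DC2, and not DC1, is reached: two almost periodic points return close to one another along a syndetic set of instants, which prevents the density of ``close'' times from having lower density zero at any fixed scale, so DC1 is obstructed; DC2 only demands that the ``far apart'' times have positive upper density at a single scale, and this is precisely what the positive-entropy (mean Li--Yorke) pairs inside $M$ deliver, while DC1 for minimal points remains open.
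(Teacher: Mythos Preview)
Your strategy coincides with the paper's: find a minimal subsystem $M\subseteq X$ with $h_{top}(f|_M)>0$, then apply Downarowicz's theorem that positive entropy implies an uncountable DC2-scrambled set, noting $M\subseteq AP(f)$. The paper's proof is two sentences citing \cite{Dwic} for the DC2 implication and \cite{DongTian2017} for the existence of minimal subsystems with entropy arbitrarily close to full entropy (under specification, almost specification, or shadowing with positive entropy). Your Krieger-embedding sketch is a legitimate alternative in the symbolic setting, but for the general case you, like the paper, end up invoking an external result; the paper simply names the reference.

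One correction to your closing remark: the claim that ``two almost periodic points return close to one another along a syndetic set of instants'' is false. Almost periodicity of $x$ and $y$ gives syndetic returns of each point to \emph{its own} neighborhood, not syndetic proximity of the pair. In fact there exist minimal (hence almost periodic) systems exhibiting DC1, so DC1 inside $AP(f)$ is not structurally obstructed---it is simply not delivered by the Downarowicz result, and the question of DC1 in $AP(f)$ under these hypotheses is left open by the paper as well.
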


{\bf Proof.} From \cite{Dwic} a dynamical system with positive entropy has DC-2 scrambled set so that if a minimal subsystem has positive entropy, then the proof is completed. In fact, from \cite{DongTian2017}, we know there exist minimal subsystems arbitrarily close to full entropy (and thus $  AP(f)$ carries full topological entropy). \qed

\medskip

 From    \cite{BJ} the set of parameters of $\beta$ for which   specification holds, is dense in $ (1,+\infty)$ but has Lebesgue zero measure. However,  every $\beta$ shift has almost specification by \cite{PS2} so that  Theorem \ref{Th-Minimal} applies in all $\beta$ shifts. 

Let    $C(M) $be the set of continuous maps on a compact manifold $ M$ and
$H(M)$ the set of homeomorphisms on $M$.    Recall that $C^0$ generic $f\in H(M)$ (or $f\in C(M)$) has the shadowing property and infinite topological entropy (see \cite{KMO2014} and \cite{ Koscielniak2,  Koscielniak},   respectively). Thus   Theorem \ref{Th-Minimal} applies in $C^0$ generic dynamical systems.

\bigskip

{\bf Acknowledgements.  }   Tian is the corresponding author and is supported by National Natural Science Foundation of China (grant no.   11671093).

\end{document}